\documentclass[12pt,reqno]{amsart}

\usepackage{multirow}
\usepackage{hhline}
\usepackage{float}
\usepackage{multirow}
\usepackage{multicol}
\usepackage{mathtools}
\usepackage{times}
\usepackage[T1]{fontenc}
\usepackage{mathrsfs}
\usepackage{latexsym}
\usepackage[dvips]{graphics}
\usepackage[titletoc, title]{appendix}
\usepackage{amsmath,amsfonts,amsthm,amssymb,amscd}
\usepackage[dvipsnames]{xcolor}
\usepackage{hyperref}
\usepackage{amsmath}
\usepackage[utf8]{inputenc}

\usepackage{color}
\usepackage{breakurl}

\usepackage{comment}
\newcommand{\bburl}[1]{\textcolor{blue}{\url{#1}}}

\usepackage{caption}

\newtheorem{thm}{Theorem}[section]

\newtheorem{lem}[thm]{Lemma}
\newtheorem{prop}[thm]{Proposition}

\newtheorem{defi}[thm]{Definition}
\newtheorem{rek}[thm]{Remark}

\usepackage[utf8]{inputenc}

\DeclareFixedFont{\ttb}{T1}{txtt}{bx}{n}{12} % for bold
\DeclareFixedFont{\ttm}{T1}{txtt}{m}{n}{12}  % for normal

\usepackage{color}
\definecolor{deepblue}{rgb}{0,0,0.5}
\definecolor{deepred}{rgb}{0.6,0,0}
\definecolor{deepgreen}{rgb}{0,0.5,0}

\usepackage{listings}

\newcommand\pythonstyle{\lstset{
language=Python,
basicstyle=\ttm,
morekeywords={self},              % Add keywords here
keywordstyle=\ttb\color{deepblue},
emph={MyClass,__init__},          % Custom highlighting
emphstyle=\ttb\color{deepred},    % Custom highlighting style
stringstyle=\color{deepgreen},
frame=tb,                         % Any extra options here
showstringspaces=false
}}

\lstnewenvironment{python}[1][]
{
\pythonstyle
\lstset{#1}
}
{}

\newcommand\pythoninline[1]{{\pythonstyle\lstinline!#1!}}

\definecolor{ao}{rgb}{0.0, 0.5, 0.0}

\numberwithin{equation}{section}

\DeclareFontFamily{U}{mathx}{}
\DeclareFontShape{U}{mathx}{m}{n}{<-> mathx10}{}
\DeclareSymbolFont{mathx}{U}{mathx}{m}{n}
\DeclareMathAccent{\widehat}{0}{mathx}{"70}
\DeclareMathAccent{\widecheck}{0}{mathx}{"71}

\begin{document}

\title{Schreier-Type Sets and Linear Recurrences:\\ Connections and Developments}

\author[H. V. Chu]{H\`ung Vi\d{\^e}t Chu}
\email{\textcolor{blue}{\href{mailto:hchu@wlu.edu}{hchu@wlu.edu}}}
\address{Department of Mathematics\\ Washington and Lee University, Lexington, VA 24450, USA}

\thanks{}

\subjclass[2020]{11B37 (primary); 11B39, 11B50, 11B65 (secondary)}

\keywords{Schreier set; Fibonacci number; symmetric set}

\maketitle
 
\begin{abstract}
We demonstrate several common techniques for proving linear recurrences from counting Schreier-type sets. These techniques include formula-based arguments, bijective proofs, mathematical induction, the inclusion-exclusion principle, and the characteristic polynomial method.

As new contributions, we examine symmetric maximal Schreier sets, Schreier sets that contain a prescribed integer, and Schreier sets that avoid integers belonging to a fixed arithmetic progression. Along the way, we employ useful techniques for
identifying meaningful patterns in data and establishing technical identities. The results presented here, together with the diverse proof techniques employed, are expected to serve as a valuable resource for undergraduate researchers interested in this area.
\end{abstract}

\tableofcontents

%%%%%%%%%%%%%%%%%%%%%%%%%%%%%%%%%%%%%%%%%%%%%%%%%%%%%%%%%%%%%%%%%%%%%%%%%%%%%%%%%%%%%%%%%%%%%%%%%%%%%%%%%%%%%%%%%%%%%%%%%%%%%%%%%%%%%%%%%%%%%%%%%%%%%%%%%%%%%%%%%%%%%%%%%%%%%%%%%%%%%%%%%%%%%%%%%%%%%%%%%%%%%%%%%%%%%%%%%%%%%%%%%%%%%%%%%%%%%%%%%%%%%%%%%%%%%%%%%%%%%%%%%%%%%%%%%%%%%%%%%%%%%%%%%%%%%%%%%%%%%%%%%%%%%%%%%%%%%%%%%%%%%%%%%%%%%%%%%%%%%%%%%%%%%%%%%%%%%%%%%%%

\section{Introduction}\label{sec-intro}

Let $A\subset \mathbb{N}$ be nonempty and finite. Then $A$ is \textit{Schreier} (\textit{maximal Schreier}, respectively) if $\min A\ge |A|$ ($\min A = |A|$, respectively). The Fibonacci numbers are defined recursively as $F_1 = F_2 = 1$ and $F_n = F_{n-1} + F_{n-2}$ for $n\ge 3$. For each positive integer $n$, we use $[n]$ to denote the set $\{1, 2, \ldots, n\}$. In a blog post \cite{Bi}, Bird observed that
\begin{equation}\label{e-1}|\{F\subset [n]\,:\, F\mbox{ is Schreier and }n\in F\}|\ =\ F_n.\end{equation}
In Section \ref{formula-intro}, we show two different ways to prove \eqref{e-1}: one is formula-based, and the other due to Bird involves bijective maps. In Section \ref{casestudy-symmetric-maximal}, we use these tools more intensively as we count symmetric maximal Schreier subsets $F$ of $\{1, 2, \ldots, 2n+1\}$ with $2n+1\in F$. We recall the definition of symmetric sets. Given a number $z$ and a nonempty set $A$, we let $z + A := \{z+a:a\in A\}$, $zA:=\{za: a\in A\}$, and $s_A:= (\min A + \max A)/2$.
We say $A$ is \textit{symmetric} if 
\begin{equation}\label{e1}A \ =\ 2s_A - A,\end{equation} 
which is equivalent to
\begin{equation}\label{e2}s_A-\left\{a\in A\,:\, a < s_A\right\} \ =\ \left\{a\in A\,:\, a > s_A\right\}-s_A.\end{equation}
While we later use \eqref{e1} to check for symmetry, one may find \eqref{e2} more intuitive as \eqref{e2} states that $A$ is symmetric about $s_A$. Appendix \ref{symset} gives a proof of the equivalence. Let 
$$\mathcal{M}_n\ :=\ \{F\subset [n]\,:\, F\mbox{ is symmetric maximal Schreier and }n\in F\}.$$
Here is our first result.
\begin{thm}\label{m1}
    For $n\ge 0$, we have
    \begin{equation}\label{e70}|\mathcal{M}_{2n+1}|\ =\ F_{n+1}.\end{equation}
\end{thm}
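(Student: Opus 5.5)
The plan is to count $\mathcal{M}_{2n+1}$ directly by the size $k=\min F=|F|$, reduce each count to a binomial coefficient, and then sum these with the standard diagonal-sum identity
$$\sum_{i\ge 0}\binom{m-i}{i}\ =\ F_{m+1}\qquad (m\ge 0).$$
This is the formula-based approach of Section \ref{formula-intro}. The identity follows by induction from Pascal's rule, and I would take it as known from that section. Throughout I use the convention $\binom{a}{b}=0$ when $b<0$ or $b>a$.

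\textbf{Reduction to pair choices.} Fix $F\in\mathcal{M}_{2n+1}$ with $k=\min F$. Then $\max F=2n+1$, $|F|=k$ and $s_F=(k+2n+1)/2$. By \eqref{e1}, $F$ is closed under $x\mapsto k+2n+1-x$. So $F$ consists of $k$, $2n+1$, and a set of $k-2$ elements of the interior $\{k+1,\dots,2n\}$ that is invariant under this reflection. Conversely, any such reflection-invariant set gives an element of $\mathcal{M}_{2n+1}$. The interior has $2n-k$ elements, and the reflection splits it into pairs together with possibly one fixed point $s_F$.

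\textbf{Case $k=2j$ even.} The interior has $2n-2j$ elements and no fixed point. It therefore splits into $n-j$ pairs, and we must choose $j-1$ of them. This gives $\binom{n-j}{j-1}$ sets.

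\textbf{Case $k=2j+1$ odd, $j\ge1$.} The interior has $2n-2j-1$ elements, namely one fixed point and $n-j-1$ pairs. Since $k-2=2j-1$ is odd, the fixed point must be included, and then we choose $j-1$ pairs. This gives $\binom{n-j-1}{j-1}$ sets.

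\textbf{The case $k=1$.} Here $F=\{1\}$, which forces $2n+1=1$, so this occurs only when $n=0$. In that case $|\mathcal{M}_1|=1=F_1$, which I would check directly. From now on assume $n\ge1$. Any $k>2n+1$ is impossible, and the binomials above vanish automatically outside the feasible range.

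\textbf{Summation.} For $n\ge1$ we get
$$|\mathcal{M}_{2n+1}|\ =\ \sum_{j\ge1}\binom{n-j}{j-1}+\sum_{j\ge1}\binom{n-1-j}{j-1}.$$
Substitute $i=j-1$ in both sums. The first becomes $\sum_i\binom{n-1-i}{i}=F_n$. The second becomes $\sum_i\binom{n-2-i}{i}$, which is $F_{n-1}$ for $n\ge2$ and is $0$ for $n=1$; note that $F_0=0$ matches this. Adding the two gives $F_n+F_{n-1}=F_{n+1}$, which is \eqref{e70}.

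\textbf{Main obstacle.} The step needing most care is the parity bookkeeping at the centre. When $k$ is odd, $s_F$ is an integer, and I must argue that it is forced into $F$ because $k-2$ is odd. When $k$ is even, $s_F$ is a half-integer, so no fixed point exists. I also need to check the boundary indices: $k=2$ gives $F=\{2,2n+1\}$, and the small values $n=0,1$ should not break the identity. Both are routine once the convention for out-of-range binomial coefficients is fixed.
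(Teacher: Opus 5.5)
Your proof is correct and follows the paper's formula-based proof: your size-$k$ counts $\binom{n-j}{j-1}$ for $k=2j$ and $\binom{n-j-1}{j-1}$ for $k=2j+1$ are exactly the paper's Case~2 and Case~1 counts. The only difference is the final summation. You evaluate the even-size and odd-size sums separately as $F_n$ and $F_{n-1}$ via \eqref{oot}. The paper instead pairs sizes $2j$ and $2j+1$ with Pascal's rule and applies \eqref{oot} once, writing out only the case of even $n$. Your route is slightly cleaner and handles both parities of $n$ at once.
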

For the bijective proof of \eqref{e70}, we show not only the maps but also how the data are used to figure out these maps.

Next, we count Schreier sets that contain a fixed integer $u\ge 1$. Let 
$$\mathcal{F}_{u,n}\ :=\ \{F\subset [n]\,:\,F\mbox{ is Schreier and }u\in F\}.$$
\begin{table}[H]
\centering
\begin{tabular}{ |c| c| c| c| c| c| c| c| c| c| c| c| c| c| c| c| c| c| c| c| c|}
\hline
$n$ &$1$& $2$ & $3$ & $4$ & $5$ & $6$ & $7$ & $8$ & $9$ & $10$ & $11$ & $12$ & $13$ & $14$ & $15$ \\
\hline
$|\mathcal{F}_{1,n}|$ & $1$ & $1$ & $1$ & $1$ & $1$ & $1$ & $1$ & $1$ & $1$ & $1$ & $1$ & $1$ & $1$ & $1$ & $1$  \\
\hline
$|\mathcal{F}_{2,n}|$ &$0$&  $1$ & $2$ & $3$ & $4$ & $5$ & $6$ & $7$ & $8$ & $9$ & $10$ & $11$ & $12$ & $13$ & $14$  \\
\hline
$|\mathcal{F}_{3,n}|$ &$0$& $0$ & $2$ & $3$ & $5$ & $8$ & $12$ & $17$ & $23$ & $30$ & $38$ & $47$ & $57$ & $68$ & $80$   \\
\hline
$|\mathcal{F}_{4,n}|$  & $0$ & $0$ & $0$& $3$ & $5$ & $8$ & $13$ & $21$ & $33$ & $50$ & $73$ & $103$ & $141$ & $188$ & $245$  \\
\hline
$|\mathcal{F}_{5,n}|$ & $0$ & $0$ & $0$& $0$ & $5$ & $8$ & $13$ & $21$ & $34$ & $55$ & $88$ & $138$ & $211$ & $314$ & $455$  \\
\hline
\end{tabular}
\caption{The first $15$ values of $(|\mathcal{F}_{u, n}|)_{n=1}^\infty$ with $1\le u\le 5$.}
\label{Data_oneint}
\end{table}

At first glance, one may find it challenging to detect a recurrence relation for later sequences in Table \ref{Data_oneint}. Section \ref{sec-nointeger} demonstrates how the data suggest not only Recurrence \eqref{e40} below but also two different proofs: one based on the inclusion-exclusion principle and the other based on mathematical induction.

\begin{thm}\label{m2}
Let $u\in \mathbb{N}$. For $u\le n\le 2u-1$, we have
$|\mathcal{F}_{u,n}| = F_{n}$.
    For $n\ge 2u$,
    \begin{equation}\label{e40}|\mathcal{F}_{u,n}|\ =\ \sum_{i=1}^u (-1)^{i-1}\binom{u}{i}|\mathcal{F}_{u,n-i}|.\end{equation}
\end{thm}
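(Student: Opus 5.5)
The plan is to obtain an explicit formula for $|\mathcal{F}_{u,n}|$ by splitting according to $m=\min F$, and then read off both parts of Theorem \ref{m2} from it. Let $F\in\mathcal{F}_{u,n}$ with $\min F=m$. Since $u\in F$, we have $1\le m\le u$, and the Schreier condition says $|F|\le m$.

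\emph{Case $m<u$.} Here $F=\{m,u\}\cup G$, where $G$ is any subset of $\{m+1,\ldots,n\}\setminus\{u\}$ (a set of $n-m-1$ elements) with $|G|\le m-2$.

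\emph{Case $m=u$.} Here $F=\{u\}\cup G$ with $G\subset\{u+1,\ldots,n\}$ and $|G|\le u-1$.

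Hence, for $n\ge u$,
\begin{equation*}
|\mathcal{F}_{u,n}|\ =\ \sum_{m=1}^{u-1}\sum_{j=0}^{m-2}\binom{n-m-1}{j}\ +\ \sum_{j=0}^{u-1}\binom{n-u}{j}.
\end{equation*}

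\emph{The recurrence \eqref{e40}.} For fixed $j$ and $c$, the function $x\mapsto\binom{x-c}{j}=(x-c)(x-c-1)\cdots(x-c-j+1)/j!$ is a polynomial of degree $j$. It agrees with the combinatorial binomial coefficient whenever $x-c\ge 0$. Every upper entry in the formula above is $n-m-1\ge 0$ or $n-u\ge 0$ once $n\ge u$, and every lower index is at most $u-1$. So there is a polynomial $P$ of degree at most $u-1$ with $|\mathcal{F}_{u,n}|=P(n)$ for all $n\ge u$. The $u$-th finite difference of such a polynomial vanishes:
\[\sum_{i=0}^u(-1)^i\binom{u}{i}P(n-i)=0.\]
For $n\ge 2u$, all arguments $n-i$ with $0\le i\le u$ satisfy $n-i\ge u$, so each $P(n-i)$ equals $|\mathcal{F}_{u,n-i}|$. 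Rearranging gives \eqref{e40}. The only care needed is this range check, and it explains exactly why the recurrence starts at $n=2u$.

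\emph{The initial values $u\le n\le 2u-1$.} I compare with the same decomposition for \eqref{e-1}, i.e.\ Schreier sets with maximum $n$, whose count is $F_n$.

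For $m<u\le n$, a Schreier set with minimum $m$ containing $n$ is $\{m,n\}\cup G$ with $G\subset\{m+1,\ldots,n-1\}$ and $|G|\le m-2$. This gives $\sum_{j=0}^{m-2}\binom{n-m-1}{j}$, term-for-term the same as in our formula; the element $u$ simply plays the role of $n$.

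For $m\ge u$, the \eqref{e-1}-count contributes $1$ when $m=n$. When $u\le m<n$ it contributes $\sum_{j\le m-2}\binom{n-m-1}{j}$. Because $n\le 2u-1$, we have $n-m-1\le u-2\le m-2$, so this is a full sum and equals $2^{n-m-1}$. Summing gives
\[1+\sum_{m=u}^{n-1}2^{n-m-1}=2^{n-u}.\]
On our side, $n-u\le u-1$ makes the $m=u$ term also a full sum, equal to $2^{n-u}$. The two totals agree, so $|\mathcal{F}_{u,n}|=F_n$ by \eqref{e-1}.

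The main obstacle I expect is this bookkeeping: checking that the truncations of the binomial sums become full sums exactly in the range $n\le 2u-1$.
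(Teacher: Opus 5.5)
Your proof is correct, and it takes a genuinely different route from the paper.

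\textbf{Initial values.} The paper splits $\mathcal{F}_{u,n}$ by cardinality rather than by minimum. A $k$-element member is $u$ together with $k-1$ elements of $\{k,\ldots,n\}\setminus\{u\}$, giving $\binom{n-k}{k-1}$ for $k\le u$. When $n\le 2u-1$ the cap $k\le u$ is inactive, so the total is exactly the Fibonacci sum \eqref{oot}. You instead match the minimum-decomposition of $\mathcal{F}_{u,n}$ term by term against the one for the sets counted in \eqref{e-1}. The parts with minimum at least $u$ both collapse to $2^{n-u}$. This is equally short and relies on \eqref{e-1} instead of \eqref{oot}.

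\textbf{Recurrence.} The paper gives two combinatorial proofs. The first is inclusion--exclusion over subsets $G$ of the top block $\{n-u+1,\ldots,n\}$: Lemma~\ref{iel} shows the members avoiding $G$ are equinumerous with $\mathcal{F}_{u,n-|G|}$, and for $n\ge 2u$ no member contains the whole block. The second is induction on $u$ via $|\mathcal{F}_{u,n}|-|\mathcal{F}_{u,n-1}|=|\mathcal{F}_{u-1,n-2}|$ (Lemma~\ref{l100}).

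Your finite-difference argument is essentially the closed-form version of that induction. Lemma~\ref{l100} says one backward difference lowers $u$ by one, which is the combinatorial counterpart of ``the degree drops by one.''

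\textbf{Comparison.} Your route gives brevity, an explicit formula, and a transparent reason for both the characteristic polynomial $(1-x)^u$ and the threshold $n\ge 2u$: all $u+1$ arguments must lie where the count is polynomial. The paper's inclusion--exclusion proof gives the coefficients $\binom{u}{i}$ a combinatorial meaning, namely the number of ways to forbid $i$ of the top $u$ elements.

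\textbf{A simplification.} The cardinality count gives $|\mathcal{F}_{u,n}|=\sum_{k=1}^{u}\binom{n-k}{k-1}$ for every $n\ge u$, since terms with $2k>n+1$ vanish. Each summand is visibly a polynomial of degree $k-1$ in $n$ on that range. So your polynomial step could run on this single sum instead of your double sum over the minimum.
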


Finally, suppose we want to prove a complicated linear recurrence $R_1$ for a given sequence $(a_n)$, which is a periodic subsequence of a sequence $(b_n)$ satisfying a much simpler linear recurrence $R_2$. The characteristic polynomial method enables us to exploit this relationship to establish $R_1$. Here is the outline of the method:
$$\mbox{Goal: prove that a sequence }(a_n)\mbox{ satisfies the linear recurrence }R_1.$$
\begin{itemize}
    \item Step 1: Find a sequence $(b_n)$ such that $(a_n)$ is an $s$-periodic subsequence of $(b_n)$; that is, $(a_n)$ is obtained by taking every $s$th term of $(b_n)$. Moreover, $(b_n)$ is known to satisfy a linear recurrence $R_2$ that is simpler than $R_1$. 
    \item Step 2: The recurrences $R_1$ and $R_2$ are encoded by the polynomials $q(x)$ and $p(x)$, respectively. If we can verify that $p(x)$ divides $q(x^s)$, then $(b_n)$ also satisfies the recurrence encoded by $q(x^s)$. Since $(a_n)$ is an $s$-periodic subsequence of $(b_n)$, it follows that $(a_n)$ satisfies $q(x)$ and thus, $R_1$.
\end{itemize}

We illustrate this method by generalizing \cite[Theorem 3]{CGKMTV}, which concerns Schreier sets avoiding multiples of a fixed integer, to the broader setting of Schreier sets that avoid terms of a fixed arithmetic progression. 

For $a,b\in \mathbb{N}$ with $b\ge 2$ and $0\le a < b$, let 
$G^{(a,b)}_n$ be the set of the first $n$ positive integers that are not $a$ modulo $b$. Let $g^{(a,b)}(n)$ denote the $n$th positive integer that is not $a$ modulo $b$ so that 
$$G^{(a,b)}_n\ =\ \{g^{(a,b)}(1), g^{(a,b)}(2), \ldots, g^{(a,b)}(n)\}.$$
Then
\begin{equation}\label{e32}g^{(a,b)}(n)\ =\ \begin{cases}\left\lfloor \frac{bn-1}{b-1}\right\rfloor, &\mbox{ if }a = 0,\\ \left\lfloor \frac{bn-a}{b-1}\right\rfloor+1, &\mbox{ if } 1\le a\le b-1.\end{cases}\end{equation}
Interested readers may see \cite[Lemma 2]{CGKMTV} for the proof of \eqref{e32} when $a = 0$. To prove the formula for $a\ge 1$, we write $n = (b-1)j+m$ with $j\ge 0$ and $0\le m\le b-2$. We have
\begin{align*}
    \left\lfloor \frac{bn-a}{b-1}\right\rfloor+1&\ =\ \left\lfloor \frac{b((b-1)j+m)-a}{b-1}\right\rfloor+1\\
    &\ =\ bj + 1 + m + \left\lfloor \frac{m-a}{b-1}\right\rfloor\\
    &\ =\ \begin{cases} bj+m, &\mbox{ if }0\le m \le a-1,\\ bj+1+m, &\mbox{ if }a\le m\le b-2.\end{cases}
\end{align*}
This confirms \eqref{e32} in the case $a\ge 1$.

For $n\in \mathbb{N}$, define 
$$\mathcal{D}^{(a,b)}_{n}\ :=\ \left\{F\subset G^{(a,b)}_{n}\,:\, g^{(a,b)}(n)\in F\mbox{ and }F\mbox{ is Schreier}\right\}.$$

\begin{table}[H]
\centering
\begin{tabular}{ |c| c| c| c| c| c| c| c| c| c| c| c| c| c| c| c| c| c| c|}
\hline
$n$ &$1$& $2$ & $3$ & $4$ & $5$ & $6$ & $7$ & $8$ & $9$ & $10$ & $11$ & $12$ & $13$  \\
\hline
$|\mathcal{D}^{(0, 2)}_{n}|$ & $1$ & $1$ & $2$ & $4$ & $7$ & $12$ & $21$ & $37$ & $65$ & $114$ & $200$ & $351$ & $616$  \\
\hline
$|\mathcal{D}^{(1, 2)}_{n}|$ & $1$ & $2$ & $3$ & $5$ & $9$ & $16$ & $28$ & $49$ & $86$ & $151$ & $265$ & $465$ & $816$  \\
\hline
$|\mathcal{D}^{(0, 3)}_n|$ & $1$ & $1$ & $2$ & $3$ & $5$ & $9$ & $16$ & $28$ & $48$ & $81$ & $136$ & $229$ & $388$  \\
\hline
$|\mathcal{D}^{(1, 3)}_n|$ & $1$ & $2$ & $3$ & $5$ & $8$ & $13$ & $22$ & $38$ & $66$ & $114$ & $195$ & $331$ & $560$ \\
\hline
$|\mathcal{D}^{(2, 3)}_n|$ & $1$ & $1$ & $2$ & $4$ & $7$ & $12$ & $20$ & $33$ & $55$ & $93$ & $159$ & $273$ & $468$ \\
\hline
$|\mathcal{D}^{(0, 4)}_n|$ & $1$ & $1$ & $2$ & $3$ & $5$ & $8$ & $13$ & $22$ & $38$ & $66$ & $114$ & $195$ & $330$ \\
\hline
$|\mathcal{D}^{(1, 4)}_n|$ & $1$ & $2$ & $3$ & $5$ & $8$ & $13$ & $21$ & $34$ & $56$ & $94$ & $160$ & $274$ & $469$  \\
\hline
$|\mathcal{D}^{(2, 4)}_n|$ & $1$ & $1$ & $2$ & $4$ & $7$ & $12$ & $20$ & $33$ & $54$ & $88$ & $144$ & $238$ & $398$  \\
\hline
$|\mathcal{D}^{(3, 4)}_n|$ & $1$ & $1$ & $2$ & $3$ & $5$ & $9$ & $16$ & $28$ & $48$ & $81$ & $135$ & $223$ & $367$ \\
\hline
\end{tabular}
\caption{The first $13$ values of $(|\mathcal{D}^{(a,b)}_{n}|)_{n=1}^\infty$ with $2\le b\le 4$ and $0\le a\le b-1$.}
\label{Dn}
\end{table}

Since \cite[Theorem 3]{CGKMTV} already handled the case $a = 0$, we assume $a\ge 1$ and prove the following theorem in Section \ref{sec-amodb}. It turns out that $(|\mathcal{D}^{(a,b)}_n|)_{n=1}^\infty$ satisfies the same recurrence that is independent of $a$, differing only in initial values; however, the proof is considerably more technical.

\begin{thm}\label{mt3}
For $n\le 2b-1$, 
$$|\mathcal{D}^{(a,b)}_n|\ =\ \sum_{k=0}^{\left\lfloor \frac{nb-a}{2b-1}\right\rfloor} \binom{n-k+\left\lfloor\frac{k-a}{b}\right\rfloor}{k}.$$
    For $n\ge 2b$,
    $$|\mathcal{D}^{(a,b)}_n|\ =\ \sum_{i=1}^b(-1)^{i-1}\binom{b}{i}|\mathcal{D}^{(a,b)}_{n-i}| + |\mathcal{D}^{(a,b)}_{n-2b+1}|.$$
\end{thm}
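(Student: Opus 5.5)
Write $g=g^{(a,b)}$ and $D_n=|\mathcal{D}^{(a,b)}_n|$. We start with an explicit formula for $D_n$, valid for every $n$. A set $F\in\mathcal{D}^{(a,b)}_n$ consists of $g(n)$ together with a set $E\subset G^{(a,b)}_{n-1}$. Put $k=|E|$. The Schreier condition says $\min F\ge k+1$.

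When $E=\emptyset$ the condition is automatic and contributes $\binom{n-1}{0}$. When $k\ge1$, $\min F$ is the smallest element of $E$, so $E$ must be a $k$-subset of $G^{(a,b)}_{n-1}\cap[k+1,\infty)$. This set has size $n-1-|\{m\le k : m\not\equiv a \bmod b\}|$. We then have
$$D_n=\sum_{k\ge 0}\binom{n-1-k+c_k}{k},$$
where $c_k=|\{m\le k: m\equiv a\bmod b\}|=\lfloor (k-a)/b\rfloor+1$ (for $a\ge1$). This gives $n-k+\lfloor (k-a)/b\rfloor$ as the top entry, matching the claimed formula.

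The range of summation is where the top entry stays $\ge k$. Equivalently, we need $k\le g(n)-k$ (the largest element is $g(n)$ and $E$ has $k$ elements $\ge k+1$). Using \eqref{e32}, this translates to $k\le\lfloor (nb-a)/(2b-1)\rfloor$, and I would check this conversion carefully. So the first statement holds for all $n$, and in particular for $n\le 2b-1$.

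For the recurrence I follow the characteristic polynomial method outlined in the introduction, which I would not attempt through the binomial sum directly. Let $b_m$ be the number of Schreier subsets of $[m]$ with maximum $m$ and avoiding the residue $a$ below $m$. For $m$ with $m\not\equiv a$, $m=g(n)$, this count is exactly $D_n$. It is cleaner to refine by residue: the first-element recursion gives a linear recurrence for $b_m$. I would derive this recurrence by the Bird-type shift bijection used for \eqref{e-1}, namely removing the maximum and decrementing. The shift $E\mapsto E-1$ does not preserve avoidance of a residue class, though; instead one should shift by $b$. Removing $b$ from every element and deleting one element changes the Schreier condition consistently. This yields a recurrence of the form $b_m=b_{m-b}+(\text{terms})$ in the full integer index, whose characteristic polynomial is something like $p(x)$ with $x^{b}$-structure.

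The target recurrence in $n$ has characteristic polynomial $q(y)=(y-1)^b y^{b-1}-1$ times a power of $y$; specifically
$$y^{2b}-\sum_{i=1}^b(-1)^{i-1}\binom bi y^{2b-i}-y = y^{b}(y-1)^b - y .$$
Since $g$ advances by $b$ every $b-1$ steps of $n$, the subsequence $(D_{n+(b-1)j})_j$ sits inside a $b$-periodic subsequence of the integer-indexed sequence. The plan is therefore to show that $p(x)$ divides $q$ evaluated appropriately, i.e. to verify a polynomial identity relating $x^{b}$ and $y^{b-1}$. This is Step 2 of the method.

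The main obstacle is setting this up correctly. The alternative I would try first if it fails is direct verification from the binomial formula. Write each $D_{n-i}$ in the form $\sum_k\binom{N-k+\lfloor (k-a)/b\rfloor}{k}$ and apply the identity $\sum_{i=0}^b(-1)^i\binom bi\binom{M-i}{k}=\binom{M-b}{k-b}$, which is iterated Pascal. This shows that the alternating sum collapses to $\sum_k\binom{n-b-k+\lfloor (k-a)/b\rfloor}{k-b}$. Reindexing $k\mapsto k+b$ raises the floor by exactly $1$, giving $\sum_k\binom{n-2b+1-k+\lfloor (k-a)/b\rfloor}{k}=D_{n-2b+1}$. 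The remaining care is boundary terms where summation ranges or negative top entries differ, which is where $n\ge 2b$ is used.
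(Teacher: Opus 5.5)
Your fallback argument, direct verification from the binomial formula, is correct and gives a genuinely different and much shorter proof than the paper's. Your primary plan, the characteristic-polynomial route through $b_m$, does not work as set up and should be dropped.

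\textbf{Why the $b_m$ route fails.} Indexing by the integer value $m=g^{(a,b)}(n)$ does not make $(|\mathcal{D}^{(a,b)}_n|)$ a periodic subsequence. Since $g^{(a,b)}(n+b-1)=g^{(a,b)}(n)+b$, any recurrence for $b_m$ with $x^b$-structure only relates $D_n, D_{n+b-1}, D_{n+2(b-1)},\ldots$. That yields recurrences along residue classes of $n$ modulo $b-1$, not the consecutive-index recurrence you want. Also, the step ``subtract $b$ from every element and delete one element'' does not preserve the Schreier condition: $\min F\ge |F|$ does not imply $\min F-b\ge |F|-1$ when $b\ge 2$.

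In the paper, the parent sequence is $d_{b,n}$ with $d_{b,n}=d_{b,n-b}+d_{b,n-2b+1}$, and the embedding is $|\mathcal{D}^{(a,b)}_n|=d_{b,nb+1-a}$ (Proposition \ref{kp}). Proving that embedding is most of the paper's work: identity \eqref{e23}, the ``posts'' Lemmas \ref{ki1}--\ref{ki3}, and Lemma \ref{GPTlem}. After that, the divisibility of $(1-x^b)^b-x^{(2b-1)b}$ by $1-x^b-x^{2b-1}$ finishes the proof.

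\textbf{The boundary bookkeeping in your fallback goes through.} Write $T_N(k)=N-k+\lfloor (k-a)/b\rfloor$ and $K_N=\lfloor (Nb-a)/(2b-1)\rfloor$.
\begin{itemize}
\item $T_N(k)\ge k$ if and only if $\lfloor (k-a)/b\rfloor\ge 2k-N$, which is equivalent to $k\le K_N$. This is also the correct way to obtain the range in your first part. Your claimed equivalence with $2k+1\le g^{(a,b)}(n)$ is false: for $(a,b)=(1,2)$ it allows $k\le n-1$ instead of $k\le\lfloor (2n-1)/3\rfloor$.
\item For $n\ge 2b$ and $0\le k\le K_n$, we have $T_n(k)\ge b$. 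If $k\ge b$, this holds because $T_n(k)\ge k$. If $k\le b-1$, it holds because $\lfloor (k-a)/b\rfloor\ge -1$, so $T_n(k)\ge 2b-(b-1)-1=b$.
\item Consequently, extending the sum for each $D_{n-i}$, $0\le i\le b$, to the range $0\le k\le K_n$ only adds terms $\binom{T_{n-i}(k)}{k}$ with $0\le T_{n-i}(k)<k$, and these vanish.
\item The $b$-fold Pascal identity is legitimate, because every top entry $T_n(k)-j$ with $0\le j\le b$ is nonnegative.
\item Finally, $K_n-b=K_{n-2b+1}\ge 0$. So after the shift $k\mapsto k+b$, the collapsed sum is exactly \eqref{e26} evaluated at $n-2b+1$.
\end{itemize}

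\textbf{What each route buys.} The paper's route establishes the structural fact $|\mathcal{D}^{(a,b)}_n|=d_{b,nb+1-a}$, which ties these sets to the sequence of \cite{CGKMTV} and showcases the polynomial method. Your route proves the stated recurrence in a few lines from the counting formula alone, with no need for the posts decomposition or the floor-function lemma.
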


We close the introduction with some notation to be used throughout the paper: given $k\in \mathbb{N}$, if $\mathcal{A}$ is a collection of sets, then $\mathcal{A}(k)$ denotes the collection of all sets in $\mathcal{A}$ of size $k$.

%%%%%%%%%%%%%%%%%%%%%%%%%%%%%%%%%%%%%%%%%%%%%%%%%%%%%%%%%%%%%%%%%%%%%%%%%%%%%%%%
%%%%%%%%%%%%%%%%%%%%%%%%%%%%%%%%%%%%%%%%%%%%%%%%%%%%%%%%%%%%%%%%%%%%%%%%%%%%%%%%%
%%%%%%%%%%%%%%%%%%%%%%%%%%%%%%%%%%%%%%%%%%%%%%%%%%%%%%%%%%%%%%%%%%%%%%%%%%%%%%%%%
%%%%%%%%%%%%%%%%%%%%%%%%%%%%%%%%%%%%%%%%%%%%%%%%%%%%%%%%%%%%%%%%%%%%%%%%%%%%%%%%%

\section{Formula-based arguments versus bijective proofs}\label{formula-intro}

In this section, we present both formula-based and bijective proofs of \eqref{e-1} and discuss the advantages and disadvantages of each approach.
Recall Bird's surprising observation that
\begin{equation}\label{e35}
|\{F\subset [n]\,:\, F\mbox{ is Schreier and }n\in F\}|\ =\ F_n,\mbox{ for all }n\in \mathbb{N}.
\end{equation}
A relatively short proof of \eqref{e35} is to derive a formula for the left-hand side and then show that the same formula yields $F_n$. We need the well-known identity that holds for all $n\ge 1$,
\begin{equation}\label{oot}F_n\ =\ \sum_{k=0}^{\lfloor \frac{n-1}{2}\rfloor}\binom{n-1-k}{k}.\end{equation}
For a proof of \eqref{oot} using tilings, see \cite[Identity 4]{BQ}, or we may use the Zeckendorf's Theorem\footnote{Zeckendorf's Theorem states that every positive integer can be written uniquely as a sum of nonadjacent Fibonacci numbers $(F_n)_{n\ge 2}$.} to obtain another proof (see \cite[(2.5]{KKMW}).

\begin{proof}[A formula-based proof of \eqref{e35}]
Let $\mathcal{S}_n = \{F\subset [n]\,:\, F\mbox{ is Schreier and }n\in F\}$.
For each $k\ge 1$, the $k$-element sets in $\mathcal{S}_n$ are formed by choosing $k-1$ elements from $\{k, k+1, \ldots, n-1\}$. Therefore, the number of $k$-element sets is
$$\binom{(n-1)-k+1}{k-1}\ =\ \binom{n-k}{k-1},$$
under the condition $n-k\ge k-1$, i.e., $k\le (n+1)/2$. Hence, we obtain the formula
$$|\mathcal{S}_n| \ =\ \sum_{k=1}^{\left\lfloor \frac{n-1}{2}\right\rfloor} \binom{n-k}{k-1}\ =\ \sum_{k=0}^{\left\lfloor \frac{n-1}{2}\right\rfloor} \binom{n-1-k}{k},$$
which is $F_n$ according to \eqref{oot}.
\end{proof}

For more formula-based proofs, see the proofs of \cite[Theorem 9]{BC}, \cite[Theorem 1.2]{BCF}, \cite[Theorems 1 and 4]{C1}, \cite[Theorems 1--5]{CIMSZ}, and \cite[Theorems 1.1]{CV0}. While the formula-based proof of \eqref{e35} is concise, it relies on the known identity \eqref{oot} and does not explain why $(|\mathcal{S}_n|)_{n=1}^\infty$ follows the Fibonacci recurrence. This motivates the bijective proof by Bird \cite{Bi}.

\begin{proof}[A bijective proof of \eqref{e35}]
    Let $\mathcal{S}_n = \{F\subset [n]\,:\, F\mbox{ is Schreier and }n\in F\}$.
    Since $\mathcal{S}_1 = \{\{1\}\}$ and $\mathcal{S}_2 = \{\{2\}\}$, it suffices to show that $|\mathcal{S}_{n-2}| + |\mathcal{S}_{n-1}| = |\mathcal{S}_n|$ for $n\ge 3$. For $n\ge 3$, we partition $\mathcal{S}_n$ into 
    $$
        \mathcal{S}_{n,1}\ =\ \{F\in \mathcal{S}_n\,:\, n-1\in F\}\quad\mbox{ and }\quad\mathcal{S}_{n,2}\ =\ \{F\in \mathcal{S}_n\,:\, n-1\notin F\}.
    $$
    Define the two maps
    \begin{align*}
    &f: \mathcal{S}_{n-1}\rightarrow \mathcal{S}_{n, 2}, \quad\quad\quad F\mapsto (F\backslash \{n-1\})\cup \{n\}, \mbox{ and}\\
    &g: \mathcal{S}_{n-2}\rightarrow \mathcal{S}_{n,1}, \quad\quad\quad F\mapsto  (F+1)\cup \{n\}.
    \end{align*}
    
    First, both $f$ and $g$ are well-defined because 
    $$\min f(F) \ \ge\ \min F\ \ge\ |F|\ =\ |f(F)|, n\in f(F), \mbox{ and }n-1\notin f(F)$$
    and
    $$\min g(F)\ =\ \min F + 1\ \ge\ |F| + 1\ = \ |g(F)|\mbox{ and }n-1, n\in g(F).$$

    Second, it is clear from the definition that $f$ and $g$ are injective. To see that $f$ is surjective, pick $E\in \mathcal{S}_{n,2}$ and consider $F = E\backslash \{n\}\cup \{n-1\}$. Then
    $$\min F\ =\ \begin{cases} n-1,&\mbox{ if }|E| = 1\\ \min E,&\mbox{ if }|E|\ge 2\end{cases}\ \ge\ |E|\ =\ |F|,$$
    so $F\in \mathcal{S}_{n-1}$, and $f(F) = E$. 

    To see that $g$ is surjective, pick $E'\in \mathcal{S}_{n,1}$ and consider $F' = (E'\backslash \{n\})-1$. Since $n-1\in E'$, we have $\max F' = n-2$, and
    $$\min F' \ =\ \min E' - 1\ \ge\ |E'|-1\ =\ |F'|,$$
    so $F'\in \mathcal{S}_{n-2}$, and $g(F') = E'$.

    We have shown that $f$ and $g$ are bijections; hence,
    $$|\mathcal{S}_{n-2}| + |\mathcal{S}_{n-1}| \ =\ |\mathcal{S}_{n,1}| + |\mathcal{S}_{n,2}|\ =\ |\mathcal{S}_n|.$$
\end{proof}

For more bijective proofs, see \cite[Proposition 2 and Theorem 6]{BC}, \cite[Theorem 1.1]{BCF}, \cite[Theorem 3]{C1}, \cite[Theorem 1.3]{C3}, \cite[Theorem 1]{CGKMTV}, \cite[Theorems 1.1 and 1.3]{CMX}, and \cite[Theorem 1.1]{CV0}. While the above bijective proof is longer and requires some ingenuity to discover the appropriate maps, its payoff is a genuine understanding of why the sequence $(|\mathcal{S}_n|)_{n=1}^\infty$ satisfies the Fibonacci recurrence.

%%%%%%%%%%%%%%%%%%%%%%%%%%%%%%%%%%%%%%%%%%%%%%%%%%%%%%%%%%%%%%%%%%%%%%%%%%%%%%%%
%%%%%%%%%%%%%%%%%%%%%%%%%%%%%%%%%%%%%%%%%%%%%%%%%%%%%%%%%%%%%%%%%%%%%%%%%%%%%%%%%
%%%%%%%%%%%%%%%%%%%%%%%%%%%%%%%%%%%%%%%%%%%%%%%%%%%%%%%%%%%%%%%%%%%%%%%%%%%%%%%%%
%%%%%%%%%%%%%%%%%%%%%%%%%%%%%%%%%%%%%%%%%%%%%%%%%%%%%%%%%%%%%%%%%%%%%%%%%%%%%%%%%

\section{Yet more on formula-based and bijective proofs}\label{casestudy-symmetric-maximal}

As a case study in formula-based and bijective proofs, this section investigates symmetric maximal Schreier sets. We  see that the appropriate bijections are sometimes obscured until the sets are decomposed into the right pieces. Rather than presenting these maps as if they were pulled out of thin air, we show how the data and intuition naturally lead to their discovery.
\subsection{A formula-based proof}
 Recall that
$$\mathcal{M}_n\ :=\ \{F\subset [n]\,:\, F\mbox{ is symmetric maximal Schreier and }n\in F\}.$$

\begin{proof}[A formula-based proof of Theorem \ref{m1}]
    We find a formula for $|\mathcal{M}_{2n+1}(k)|$ with $k\ge 1$.

\bigskip 

\noindent Case 1:  $k$ is odd. Then $F$ takes the form $\{k, \ldots, (2n+1+k)/2,\ldots, 2n+1\}$. Since $F$ is symmetric with $|F|= k$, we construct $F$ by choosing $(k - 3)/2$ integers in $[k+1, (2n-1+k)/2]$. The number of such sets $F$ is thus 
$$\binom{(2n-1+k)/2-(k+1)+1}{(k-3)/2}\ =\ \binom{(2n-1-k)/2}{(k-3)/2}.$$

\bigskip

\noindent Case 2: $k$ is even. Then to construct $F$, we choose $(k-2)/2$ integers in $[k+1, (2n+k)/2]$. There are 
$$\binom{(2n+k)/2-(k+1)+1}{(k-2)/2}\ =\ \binom{(2n-k)/2}{(k-2)/2}$$
such sets $F$.

\bigskip 

In both cases, we may write
$$|\mathcal{M}_{2n+1}(k)|\ =\ \binom{\lfloor (2n-k)/2\rfloor}{\lfloor (k-2)/2\rfloor}\ =\ \binom{n-\lceil k/2\rceil}{\lfloor k/2\rfloor-1},$$
with
$$n-\lceil k/2\rceil\ \ge\ \lfloor k/2\rfloor - 1; \mbox{ that is}, k\le n+1.$$
Hence,
$$
|\mathcal{M}_{2n+1}|\ =\ \sum_{k=1}^{n+1}\binom{n-\lceil k/2\rceil}{\lfloor k/2\rfloor-1},
$$
and thus, it suffices to prove 
$$\sum_{k=1}^{n+1}\binom{n-\lceil k/2\rceil}{\lfloor k/2\rfloor-1}\ =\ F_n, \mbox{ for all }n\ge 0.$$
We prove the identity for $n = 2s$ with $s\ge 0$. The proof for odd $n$ is similar. We have
    \begin{align*}
        &\sum_{k=1}^{n+1} \binom{n-\lceil k/2\rceil}{\lfloor k/2 \rfloor - 1}\\
        &\ =\ \sum_{k=1}^{2s+1}\binom{2s-\lceil k/2\rceil}{\lfloor k/2\rfloor - 1}\\
        &\ =\ \sum_{j=0}^{s}\left(\binom{2s-\lceil 2j/2\rceil}{\lfloor 2j/2\rfloor - 1} + \binom{2s-\lceil (2j+1)/2\rceil}{\lfloor (2j+1)/2\rfloor - 1}\right)\\
        &\ =\ \sum_{j=0}^{s}\binom{2s-j}{j - 1} + \sum_{j=0}^s\binom{2s-j-1}{j- 1}\\
        &\ =\ \sum_{j=-1}^{s-1}\binom{2s-j-1}{j } + \sum_{j=0}^s\binom{2s-j-1}{j- 1}\\
        &\ =\ \sum_{j=1}^{s}\left(\binom{2s-j-1}{j } + \binom{2s-j-1}{j- 1}\right)+\binom{2s-1}{0} + \binom{2s-1}{-1} -\binom{s-1}{s}\\
        &\ =\ \sum_{j=1}^s\binom{2s-j}{j}  + 1 + \binom{2s-1}{-1} -\binom{s-1}{s}\\
        &\ =\ \sum_{j=0}^s\binom{2s-j}{j} + \binom{2s-1}{-1} -\binom{s-1}{s}\\
        &\ =\ F_{2s+1} + \binom{2s-1}{-1} -\binom{s-1}{s}\quad \mbox{ due to }\eqref{oot}.
        \end{align*}
        Since $\binom{2s-1}{-1} = \binom{s-1}{s} = \begin{cases}1,\mbox{  if }s = 0,\\ 0,\mbox{ if }s > 0, \end{cases}$ we obtain
        $$\sum_{k=1}^{n+1} \binom{n-\lceil k/2\rceil}{\lfloor k/2 \rfloor - 1}\ =\ F_{2s+1}\ =\ F_{n+1}.$$
\end{proof}

\subsection{A bijective proof}
For a bijective proof of Theorem \ref{m1}, we verify that
\begin{align*}
\mathcal{M}_{1} \ =\ \{\{1\}\},&\quad \mathcal{M}_3\ =\ \{\{2, 3\}\}, \quad \mathcal{M}_5\ =\ \{\{2, 5\}, \{3, 4, 5\}\},\mbox{ and}\\
&\mathcal{M}_7\ =\ \{\{2, 7\}, \{3, 5, 7\}, \{4, 5, 6, 7\}\},
\end{align*}
so it suffices to prove
\begin{equation}\label{e4}|\mathcal{M}_{2n+1}| + |\mathcal{M}_{2n+3}|\ =\ |\mathcal{M}_{2n+5}|, \mbox{ for all }n\ge 2.\end{equation}
The goal is to devise bijective maps from an appropriate partition of $\mathcal{M}_{2n+1}\cup \mathcal{M}_{2n+3}$ to a partition of $\mathcal{M}_{2n+5}$. 

\subsubsection{Data analysis and educated guesses}\label{data_analysis}
We illustrate how the data naturally lead us to the right bijective maps.
A natural map $\Psi_n$ that turns a set $F$ in $\mathcal{M}_{2n+1}$ into a set in $\mathcal{M}_{2n+5}$ should replace $2n+1$ by $2n+5$; that is, $\Psi_n$ increases the maximum of $F$ by $4$. To maintain symmetry, we keep the minimum of $F$, while increasing the integers in the middle by $2$. Tables \ref{b1} and \ref{b2} show us the sets in $\mathcal{M}_{15}, \mathcal{M}_{17}$, and $\mathcal{M}_{19}$, while the latter also gives the range of $\Psi_7:  \mathcal{M}_{15}\rightarrow\mathcal{M}_{19}$.

\begin{table}[H]
\centering

\begin{minipage}[t]{0.49\textwidth}
\centering
\scriptsize
\begin{tabular}{|c |c|l|}
\hline
Size & Count & Sets \\
\hline

2 & 1 & $\{2,15\}$ \\
\hline
3 & 1 & $\{3,9,15\}$ \\
\hline
\multirow{5}{*}{4}
& \multirow{5}{*}{5}
& $\{4,5,14,15\}$  \\
& & $\{4,6,13,15\}$ \\
& & $\{4,7,12,15\}$ \\
& & $\{4,8,11,15\}$ \\
& & $\{4,9,10,15\}$ \\
\hline
\multirow{4}{*}{5}
& \multirow{4}{*}{4}
& $\{5,6,10,14,15\}$ \\
& & $\{5,7,10,13,15\}$ \\
& & $\{5,8,10,12,15\}$ \\
& & $\{5,9,10,11,15\}$ \\
\hline
\multirow{6}{*}{6}
& \multirow{6}{*}{6}
& $\{6,7,8,13,14,15\}$ \\
& & $\{6,7,9,12,14,15\}$ \\
& & $\{6,7,10,11,14,15\}$ \\
& & $\{6,8,9,12,13,15\}$ \\
& & $\{6,8,10,11,13,15\}$ \\
& & $\{6,9,10,11,12,15\}$ \\
\hline
\multirow{3}{*}{7}
& \multirow{3}{*}{3}
& $\{7,8,9,11,13,14,15\}$ \\
& & $\{7,8,10,11,12,14,15\}$ \\
& & $\{7,9,10,11,12,13,15\}$ \\
\hline
8 & 1 & $\{8,9,10,11,12,13,14,15\}$ \\

\hline
\end{tabular}

\end{minipage}
\hfill
\begin{minipage}[t]{0.49\textwidth}
\centering
\scriptsize
\begin{tabular}{|c |c| l|}
\hline
Size & Count & Sets \\
\hline

2 & 1 & $\{2,17\}$ \\
\hline
3 & 1 & $\{3,10,17\}$ \\
\hline
\multirow{6}{*}{4}
& \multirow{6}{*}{6}
& $\{4,5,16,17\}$ \\
& & $\{4,6,15,17\}$ \\
& & $\{4,7,14,17\}$ \\
& & $\{4,8,13,17\}$ \\
& & $\{4,9,12,17\}$ \\
& & $\{4,10,11,17\}$ \\
\hline
\multirow{5}{*}{5}
& \multirow{5}{*}{5}
& $\{5,6,11,16,17\}$ \\
& & $\{5,7,11,15,17\}$ \\
& & $\{5,8,11,14,17\}$ \\
& & $\{5,9,11,13,17\}$ \\
& & $\{5,10,11,12,17\}$ \\
\hline
\multirow{10}{*}{6}
& \multirow{10}{*}{10}
& $\{6,7,8,15,16,17\}$ \\
& & $\{6,7,9,14,16,17\}$ \\
& & $\{6,7,10,13,16,17\}$ \\
& & $\{6,7,11,12,16,17\}$ \\
& & $\{6,8,9,14,15,17\}$ \\
& & $\{6,8,10,13,15,17\}$ \\
& & $\{6,8,11,12,15,17\}$ \\
& & $\{6,9,10,13,14,17\}$ \\
& & $\{6,9,11,12,14,17\}$ \\
& & $\{6,10,11,12,13,17\}$ \\
\hline
\multirow{6}{*}{7}
& \multirow{6}{*}{6}
& $\{7,8,9,12,15,16,17\}$ \\
& & $\{7,8,10,12,14,16,17\}$ \\
& & $\{7,8,11,12,13,16,17\}$ \\
& & $\{7,9,10,12,14,15,17\}$ \\
& & $\{7,9,11,12,13,15,17\}$ \\
& & $\{7,10,11,12,13,14,17\}$ \\
\hline
\multirow{4}{*}{8}
& \multirow{4}{*}{4}
& $\{8,9,10,11,14,15,16,17\}$ \\
& & $\{8,9,10,12,13,15,16,17\}$ \\
& & $\{8,9,11,12,13,14,16,17\}$ \\
& & $\{8,10,11,12,13,14,15,17\}$ \\
\hline
9 & 1 & $\{9,10,11,12,13,14,15,16,17\}$ \\

\hline
\end{tabular}

\end{minipage}

\caption{The collections $\mathcal{M}_{15}$ and $\mathcal{M}_{17}$.}
\label{b1}
\end{table}

\begin{table}[H]
\centering
\scriptsize
\begin{tabular}{|c |c |l|c|}
\hline
Size & Count & Sets & In $\Psi_7(\mathcal{M}_{15})$?\\
\hline

2 & 1 & $\{2,19\}$  & $\checkmark$\\
\hline
3 & 1 & $\{3,11,19\}$ & $\checkmark$\\
\hline
\multirow{7}{*}{4}
& \multirow{7}{*}{7}
& $\{4,5,18,19\}$ & \\
& & $\{4,6,17,19\}$ &\\
& & $\{4,7,16,19\}$ &  $\checkmark$\\
& & $\{4,8,15,19\}$ & $\checkmark$\\
& & $\{4,9,14,19\}$ & $\checkmark$\\
& & $\{4,10,13,19\}$& $\checkmark$\\
& & $\{4,11,12,19\}$ & $\checkmark$\\
\hline
\multirow{6}{*}{5}
& \multirow{6}{*}{6}
& $\{5,6,12,18,19\}$ &\\
& & $\{5,7,12,17,19\}$ & \\
& & $\{5,8,12,16,19\}$ & $\checkmark$ \\
& & $\{5,9,12,15,19\}$ & $\checkmark$\\
& & $\{5,10,12,14,19\}$ & $\checkmark$\\
& & $\{5,11,12,13,19\}$ & $\checkmark$ \\
\hline
\multirow{15}{*}{6}

& \multirow{15}{*}{15}
& $\{6,7,8,17,18,19\}$ &\\
& & $\{6,7,9,16,18,19\}$ & \\
& & $\{6,7,10,15,18,19\}$ &\\
& & $\{6,7,11,14,18,19\}$ &\\
& & $\{6,7,12,13,18,19\}$ &\\
& & $\{6,8,9,16,17,19\}$ &\\
& & $\{6,8,10,15,17,19\}$ &\\
& & $\{6,8,11,14,17,19\}$ &\\
& & $\{6,8,12,13,17,19\}$ &\\
& & $\{6,9,10,15,16,19\}$ & $\checkmark$ \\
& & $\{6,9,11,14,16,19\}$ & $\checkmark$\\
& & $\{6,9,12,13,16,19\}$ & $\checkmark$\\
& & $\{6,10,11,14,15,19\}$ & $\checkmark$\\
& & $\{6,10,12,13,15,19\}$ & $\checkmark$\\
& & $\{6,11,12,13,14,19\}$ & $\checkmark$\\
\hline
\multirow{10}{*}{7}
& \multirow{10}{*}{10}
& $\{7,8,9,13,17,18,19\}$ &\\
& & $\{7,8,10,13,16,18,19\}$ &\\
& & $\{7,8,11,13,15,18,19\}$ &\\
& & $\{7,8,12,13,14,18,19\}$ &\\
& & $\{7,9,10,13,16,17,19\}$ &\\
& & $\{7,9,11,13,15,17,19\}$ &\\
& & $\{7,9,12,13,14,17,19\}$ &\\
& & $\{7,10,11,13,15,16,19\}$ & $\checkmark$\\
& & $\{7,10,12,13,14,16,19\}$ & $\checkmark$\\
& & $\{7,11,12,13,14,15,19\}$ & $\checkmark$\\
\hline
\multirow{10}{*}{8}
& \multirow{10}{*}{10}
& $\{8,9,10,11,16,17,18,19\}$ &\\
& & $\{8,9,10,12,15,17,18,19\}$ &\\
& & $\{8,9,10,13,14,17,18,19\}$ &\\
& & $\{8,9,11,12,15,16,18,19\}$ &\\
& & $\{8,9,11,13,14,16,18,19\}$ &\\
& & $\{8,9,12,13,14,15,18,19\}$ &\\
& & $\{8,10,11,12,15,16,17,19\}$ &\\
& & $\{8,10,11,13,14,16,17,19\}$ &\\
& & $\{8,10,12,13,14,15,17,19\}$ &\\
& & $\{8,11,12,13,14,15,16,19\}$ & $\checkmark$\\
\hline
\multirow{4}{*}{9}
& \multirow{4}{*}{4}
& $\{9,10,11,12,14,16,17,18,19\}$ & \\
& & $\{9,10,11,13,14,15,17,18,19\}$ &\\
& & $\{9,10,12,13,14,15,16,18,19\}$ &\\
& & $\{9,11,12,13,14,15,16,17,19\}$ &\\
\hline
10 & 1 & $\{10,11,12,13,14,15,16,17,18,19\}$ &\\

\hline
\end{tabular}
\caption{The collection $\mathcal{M}_{19}$.}
\label{b2}
\end{table}

We see that a set in $\mathcal{M}_{19}$ is in the range of $\Psi_7$ if and only if  the smallest and the second smallest integers in the set are at least $3$ apart. This motivates us to partition $\mathcal{M}_{19}$ and in general, $\mathcal{M}_{2n+5}$ into two subcollections based on the property. 

Given a nonempty, finite set $A\subset \mathbb{N}$ and $k\in \mathbb{N}$, let $\min_k A$ and $\max_k A$ be the $k$th smallest element and the $k$th largest element of $A$, respectively. 
We partition $\mathcal{M}_{2n+5}$ into 
$$\mathcal{M}^{(1)}_{2n+5}\ :=\ \left\{F\subset [2n+5]\,:\, 
\begin{matrix} \min_2 F - \min F \le 2, 2n+5\in F,\mbox{ and}\\
          F\mbox{ is maximal Schreier and symmetric}
\end{matrix}\right\}$$
and 
$$\mathcal{M}^{(2)}_{2n+5}\ :=\ \left\{F\subset [2n+5]\,:\, 
\begin{matrix} \min_2 F - \min F \ge 3, 2n+5\in F,\mbox{ and}\\
          F\mbox{ is maximal Schreier and symmetric}
\end{matrix}\right\}.$$

The above analysis guides us towards showing
\begin{equation}\label{e51}
    |\mathcal{M}_{2n+1}| \ =\ |\mathcal{M}^{(2)}_{2n+5}|\\
\end{equation}
and
\begin{equation}\label{e52}
    |\mathcal{M}_{2n+3}| \ =\ |\mathcal{M}^{(1)}_{2n+5}|,
\end{equation}
where \eqref{e51} can be seen through the map $\Psi_n: \mathcal{M}_{2n+1}\rightarrow\mathcal{M}^{(2)}_{2n+5}$ defined as
$$F\ \mapsto\ \{\min F\}\cup \left(\{a\in F: \min F < a < 2n+1\}+2\right)\cup \{2n+5\}.$$

We now investigate how the sets in $\mathcal{M}_{2n+3}$ can be mapped to the sets in $\mathcal{M}^{(1)}_{2n+5}$ with an eye on $n = 7$. Table \ref{b3} updates the data accordingly.

\begin{table}[H]
\centering

\begin{minipage}[t]{0.49\textwidth}
\centering
\scriptsize
\begin{tabular}{|c |c| l|}
\hline
Size & Count & Sets \\
\hline

2 & 1 & $\{2,17\}$ \\
\hline
3 & 1 & $\{3,10,17\}$ \\
\hline
\multirow{6}{*}{4}
& \multirow{6}{*}{6}
& $\{4,5,16,17\}$ \\
& & $\{4,6,15,17\}$ \\
& & $\{4,7,14,17\}$ \\
& & $\{4,8,13,17\}$ \\
& & $\{4,9,12,17\}$ \\
& & $\{4,10,11,17\}$ \\
\hline
\multirow{5}{*}{5}
& \multirow{5}{*}{5}
& $\{5,6,11,16,17\}$ \\
& & $\{5,7,11,15,17\}$ \\
& & $\{5,8,11,14,17\}$ \\
& & $\{5,9,11,13,17\}$ \\
& & $\{5,10,11,12,17\}$ \\
\hline
\multirow{10}{*}{6}
& \multirow{10}{*}{10}
& $\{6,7,8,15,16,17\}$ \\
& & $\{6,7,9,14,16,17\}$ \\
& & $\{6,7,10,13,16,17\}$ \\
& & $\{6,7,11,12,16,17\}$ \\
& & $\{6,8,9,14,15,17\}$ \\
& & $\{6,8,10,13,15,17\}$ \\
& & $\{6,8,11,12,15,17\}$ \\
& & $\{6,9,10,13,14,17\}$ \\
& & $\{6,9,11,12,14,17\}$ \\
& & $\{6,10,11,12,13,17\}$ \\
\hline
\multirow{6}{*}{7}
& \multirow{6}{*}{6}
& $\{7,8,9,12,15,16,17\}$ \\
& & $\{7,8,10,12,14,16,17\}$ \\
& & $\{7,8,11,12,13,16,17\}$ \\
& & $\{7,9,10,12,14,15,17\}$ \\
& & $\{7,9,11,12,13,15,17\}$ \\
& & $\{7,10,11,12,13,14,17\}$ \\
\hline
\multirow{4}{*}{8}
& \multirow{4}{*}{4}
& $\{8,9,10,11,14,15,16,17\}$ \\
& & $\{8,9,10,12,13,15,16,17\}$ \\
& & $\{8,9,11,12,13,14,16,17\}$ \\
& & $\{8,10,11,12,13,14,15,17\}$ \\
\hline
9 & 1 & $\{9,10,11,12,13,14,15,16,17\}$ \\

\hline
\end{tabular}
\end{minipage}
\hfill
\begin{minipage}[t]{0.49\textwidth}
\centering
\scriptsize
\begin{tabular}{|c |c| l|}
\hline
Size & Count & Sets \\
\hline

\hline
\multirow{2}{*}{4}
& \multirow{2}{*}{2}
& $\{4,5,18,19\}$  \\
& & $\{4,6,17,19\}$ \\
\hline
\multirow{2}{*}{5}
& \multirow{2}{*}{2}
& $\{5,6,12,18,19\}$ \\
& & $\{5,7,12,17,19\}$ \\
\hline
\multirow{9}{*}{6}
& \multirow{9}{*}{9}
& $\{6,7,8,17,18,19\}$ \\
& & $\{6,7,9,16,18,19\}$  \\
& & $\{6,7,10,15,18,19\}$ \\
& & $\{6,7,11,14,18,19\}$ \\
& & $\{6,7,12,13,18,19\}$ \\
& & $\{6,8,9,16,17,19\}$ \\
& & $\{6,8,10,15,17,19\}$ \\
& & $\{6,8,11,14,17,19\}$ \\
& & $\{6,8,12,13,17,19\}$ \\
\hline
\multirow{7}{*}{7}
& \multirow{7}{*}{7}
& $\{7,8,9,13,17,18,19\}$ \\
& & $\{7,8,10,13,16,18,19\}$ \\
& & $\{7,8,11,13,15,18,19\}$ \\
& & $\{7,8,12,13,14,18,19\}$ \\
& & $\{7,9,10,13,16,17,19\}$ \\
& & $\{7,9,11,13,15,17,19\}$ \\
& & $\{7,9,12,13,14,17,19\}$ \\
\hline
\multirow{9}{*}{8}
& \multirow{9}{*}{9}
& $\{8,9,10,11,16,17,18,19\}$ \\
& & $\{8,9,10,12,15,17,18,19\}$ \\
& & $\{8,9,10,13,14,17,18,19\}$ \\
& & $\{8,9,11,12,15,16,18,19\}$ \\
& & $\{8,9,11,13,14,16,18,19\}$ \\
& & $\{8,9,12,13,14,15,18,19\}$ \\
& & $\{8,10,11,12,15,16,17,19\}$ \\
& & $\{8,10,11,13,14,16,17,19\}$ \\
& & $\{8,10,12,13,14,15,17,19\}$ \\
\hline
\multirow{4}{*}{9}
& \multirow{4}{*}{4}
& $\{9,10,11,12,14,16,17,18,19\}$  \\
& & $\{9,10,11,13,14,15,17,18,19\}$ \\
& & $\{9,10,12,13,14,15,16,18,19\}$ \\
& & $\{9,11,12,13,14,15,16,17,19\}$ \\
\hline
10 & 1 & $\{10,11,12,13,14,15,16,17,18,19\}$ \\
\hline
\end{tabular}
\end{minipage}

\caption{The collections $\mathcal{M}_{17}$ and $\mathcal{M}^{(1)}_{19}$}
\label{b3}
\end{table}

While $|\mathcal{M}_{17}| = |\mathcal{M}^{(1)}_{19}| = 34$, sets of the same size in the two collections may come in different numbers.  In particular,   
\begin{align*}
    |\mathcal{M}_{17}|&\ =\ 1 + 1 + 6 + 5 + 10 + 6 + 4 + 1\ =\ 34,\mbox{ and }\\
    |\mathcal{M}^{(1)}_{19}|&\ =\ 2+ 2+ 9 + 7 + 9 + 4+ 1\ =\ 34.
\end{align*}
This phenomenon suggests that a deeper structure may be at work. We should examine these numbers more closely and uncover the source of the equality. The two decompositions of $34$ hint at the grouping:

\begin{table}[H]
\centering
\begin{tabular}{ |c| c| c| c| c| c||| c| c| c||| c| c| c| c| c| c|c|c| }
\hline
 $k$ &$2$& $3$ & $4$ & $5$ & $6$ & $6$ & $7$ & $8$ & $8$ & $9$ & $10$  \\
\hline
$|\mathcal{M}_{17}(k)|$ & $1$ & $1$ & $6$ & $5$ &  & $10$ & $6$ &  & $4$ & $1$ & \\
\hline
$|\mathcal{M}^{(1)}_{19}(k)|$ & $$ & $$ & $2$ & $2$ & $9$ &  & $7$ & $9$ &  & $4$ & $1$  \\
\hline
\end{tabular}
\caption{A possibly helpful grouping of $\mathcal{M}_{17}$ and $\mathcal{M}^{(1)}_{19}$.}
\end{table}
We suspect that for $n\ge 2$ and even $k\ge 6$, 
\begin{equation}\label{e45}
    |\mathcal{M}_{2n+3}(k)| + |\mathcal{M}_{2n+3}(k+1)| \ =\ |\mathcal{M}^{(1)}_{2n+5}(k+1)| + |\mathcal{M}^{(1)}_{2n+5}(k+2)|.
\end{equation}
Our suspicion is strongly supported by the data for different collections $\mathcal{M}_{2n+3}$ and $\mathcal{M}^{(1)}_{2n+5}$.

\begin{table}[H]
\centering
\begin{tabular}{ |c| c| c| c| c| c| c| c| c| c| c| c| c| c| c|c| }
\hline
 $k$ &$2$& $3$ & $4$ & $5$ & $6$ & $7$ & $8$ & $9$ & $10$ & $11$ & $12$ & $13$ & $14$ & $15$  \\
\hline
$|\mathcal{M}_{7}(k)|$ & $1$ & $1$ & $1$ &  &  &  &  &  &  &  & & & & \\
\hline
$|\mathcal{M}_{9}(k)|$ & $1$ & $1$ & $2$ & $1$ &  &  &  &  &  &  & & & &   \\
\hline
$|\mathcal{M}_{11}(k)|$ & $1$ & $1$ & $3$ & $2$ & $1$ &  &  &  &  &  & & & &   \\
\hline
$|\mathcal{M}_{13}(k)|$ & $1$ & $1$ & $4$ & $3$ & $3$ & $1$ &  &  &  &  & & & &   \\
\hline
$|\mathcal{M}_{15}(k)|$ & $1$ & $1$ & $5$ & $4$ & $6$ & $3$ & $1$ &  &  &  & & & &   \\
\hline 
$|\mathcal{M}_{17}(k)|$ & $1$ & $1$ & $6$ & $5$ & $10$ & $6$ & $4$ & $1$ &  &  & & & &   \\
\hline 
$|\mathcal{M}_{19}(k)|$ & $1$ & $1$ & $7$ & $6$ & $15$ & $10$ & $10$ & $4$ & $1$ &  & & & &   \\
\hline 
$|\mathcal{M}_{21}(k)|$ & $1$ & $1$ & $8$ & $7$ & $21$ & $15$ & $20$ & $10$ & $5$ & $1$  & & & &   \\
\hline 
$|\mathcal{M}_{23}(k)|$ & $1$ & $1$ & $9$ & $8$ & $28$ & $21$ & $35$ & $20$ & $15$ & $5$  & $1$ & & &   \\
\hline 
$|\mathcal{M}_{25}(k)|$ & $1$ & $1$ & $10$ & $9$ & $36$ & $28$ & $56$ & $35$ & $35$ & $15$  & $6$ & $1$ & &   \\
\hline 
$|\mathcal{M}_{27}(k)|$ & $1$ & $1$ & $11$ & $10$ & $45$ & $36$ & $84$ & $56$ & $70$ & $35$  & $21$ & $6$ & $1$ &   \\
\hline 
$|\mathcal{M}_{29}(k)|$ & $1$ & $1$ & $12$ & $11$ & $55$ & $45$ & $120$ & $84$ & $126$ & $70$  & $56$ & $21$ & $7$ & $1$   \\
\hline 
\end{tabular}
\caption{Breakdown of $\mathcal{M}_{2n+3}$ based on set sizes.}
\label{table10}
\end{table}

\begin{table}[H]
\centering
\begin{tabular}{ |c| c| c| c| c| c| c| c| c| c| c| c| c| c| c| }
\hline
 $k$ &$4$& $5$ & $6$ & $7$ & $8$ & $9$ & $10$ & $11$ & $12$ & $13$ & $14$ & $15$ & $16$   \\
\hline
$|\mathcal{M}^{(1)}_{9}(k)|$ & $2$ & $1$ &  &  &  &  &  &  &  &  & & &  \\
\hline
$|\mathcal{M}^{(1)}_{11}(k)|$ & $2$ & $2$ & $1$ &  &  &  &  &  &  &  & & &    \\
\hline
$|\mathcal{M}^{(1)}_{13}(k)|$ & $2$ & $2$ & $3$ & $1$ &  &  &  &  &  &  & & &    \\
\hline
$|\mathcal{M}^{(1)}_{15}(k)|$ & $2$ & $2$ & $5$ & $3$ & $1$ & &  &  &  &  & & &   \\
\hline
$|\mathcal{M}^{(1)}_{17}(k)|$ & $2$ & $2$ & $7$ & $5$ & $4$ & $1$ &  &  &  &  & & &    \\
\hline 
$|\mathcal{M}^{(1)}_{19}(k)|$ & $2$ & $2$ & $9$ & $7$ & $9$ & $4$ & $1$ &  &  &  & & &    \\
\hline 
$|\mathcal{M}^{(1)}_{21}(k)|$ & $2$ & $2$ & $11$ & $9$ & $16$ & $9$ & $5$ & $1$ &  &  & & &   \\
\hline 
$|\mathcal{M}^{(1)}_{23}(k)|$ & $2$ & $2$ & $13$ & $11$ & $25$ & $16$ & $14$ & $5$ & $1$ &   & & &    \\
\hline 
$|\mathcal{M}^{(1)}_{25}(k)|$ & $2$ & $2$ & $15$ & $13$ & $36$ & $25$ & $30$ & $14$ & $6$ & $1$  &  & &   \\
\hline 
$|\mathcal{M}^{(1)}_{27}(k)|$ & $2$ & $2$ & $17$ & $15$ & $49$ & $36$ & $55$ & $30$ & $20$ & $6$  & $1$ &  &   \\
\hline 
$|\mathcal{M}^{(1)}_{29}(k)|$ & $2$ & $2$ & $19$ & $17$ & $64$ & $49$ & $91$ & $55$ & $50$ & $20$  & $7$ & $1$ &     \\
\hline 
$|\mathcal{M}^{(1)}_{31}(k)|$ & $2$ & $2$ & $21$ & $19$ & $81$ & $64$ & $140$ & $91$ & $105$ & $50$  & $27$ & $7$ & $1$     \\
\hline 
\end{tabular}
\caption{Breakdown of $\mathcal{M}^{(1)}_{2n+5}$ based on set sizes.}
\label{table11}
\end{table}

Tables \ref{table10} and \ref{table11} confirm the conjectured Identity \eqref{e45} for all $n$ up to $13$: for example, 
\begin{itemize}
    \item when $(n, k) = (8, 6)$, we have 
    $$|\mathcal{M}_{19}(6)| + |\mathcal{M}_{19}(7)| \ =\ 15 + 10\  =\ 9+ 16 \ = \ |\mathcal{M}^{(1)}_{21}(7)| + |\mathcal{M}^{(1)}_{21}(8)|;$$
    \item when $(n,k) = (11, 8)$, we have
        $$|\mathcal{M}_{25}(8)| + |\mathcal{M}_{25}(9)| \ =\ 56 + 35\  =\ 36+ 55 \ = \ |\mathcal{M}^{(1)}_{27}(9)| + |\mathcal{M}^{(1)}_{27}(10)|;$$
    \item when $(n,k) = (13, 10)$, we have
        $$|\mathcal{M}_{29}(10)| + |\mathcal{M}_{29}(11)| \ =\ 126 + 70\  =\ 91+ 105 \ = \ |\mathcal{M}^{(1)}_{31}(11)| + |\mathcal{M}^{(1)}_{31}(12)|.$$
\end{itemize}
We now construct bijective maps that show \eqref{e45}. Let us see what the data for $n = 10$ suggest by comparing the sets in $\mathcal{M}_{23}(8) \cup \mathcal{M}_{23}(9)$ with the ones in $\mathcal{M}^{(1)}_{25}(9)\cup \mathcal{M}^{(1)}_{25}(10)$.

\begin{table}[H]
\centering

\begin{minipage}[t]{0.49\textwidth}
\centering
\scriptsize
\begin{tabular}{|c|c|}
\hline
 Subcollection & $\mathcal{M}_{23}(8)$\\
\hline
 \multirow{25}{*}{$A$}& \{8, 9, 10, 11, 20, 21, 22, 23\}\\
 &\{8, 9, 10, 12, 19, 21, 22, 23\}\\
&\{8, 9, 10, 13, 18, 21, 22, 23\}\\
&\{8, 9, 10, 14, 17, 21, 22, 23\}\\
&\{8, 9, 10, 15, 16, 21, 22, 23\}\\
& \{8, 9, 11, 12, 19, 20, 22, 23\}\\
&\{8, 9, 11, 13, 18, 20, 22, 23\}\\
&\{8, 9, 11, 14, 17, 20, 22, 23\}\\
&\{8, 9, 11, 15, 16, 20, 22, 23\}\\
&\{8, 9, 12, 13, 18, 19, 22, 23\}\\
&\{8, 9, 12, 14, 17, 19, 22, 23\}\\
&\{8, 9, 12, 15, 16, 19, 22, 23\}\\
&\{8, 9, 13, 14, 17, 18, 22, 23\}\\
&\{8, 9, 13, 15, 16, 18, 22, 23\}\\
&\{8, 9, 14, 15, 16, 17, 22, 23\}\\
&\{8, 10, 11, 12, 19, 20, 21, 23\}\\
&\{8, 10, 11, 13, 18, 20, 21, 23\}\\
&\{8, 10, 11, 14, 17, 20, 21, 23\}\\
&\{8, 10, 11, 15, 16, 20, 21, 23\}\\
&\{8, 10, 12, 13, 18, 19, 21, 23\}\\
&\{8, 10, 12, 14, 17, 19, 21, 23\}\\
&\{8, 10, 12, 15, 16, 19, 21, 23\}\\
&\{8, 10, 13, 14, 17, 18, 21, 23\}\\
&\{8, 10, 13, 15, 16, 18, 21, 23\}\\
&\{8, 10, 14, 15, 16, 17, 21, 23\}\\
\hline
\multirow{10}{*}{$B$} &\{8, 11, 12, 13, 18, 19, 20, 23\}\\
&\{8, 11, 12, 14, 17, 19, 20, 23\}\\
&\{8, 11, 12, 15, 16, 19, 20, 23\}\\
&\{8, 11, 13, 14, 17, 18, 20, 23\}\\
&\{8, 11, 13, 15, 16, 18, 20, 23\}\\
&\{8, 11, 14, 15, 16, 17, 20, 23\}\\
&\{8, 12, 13, 14, 17, 18, 19, 23\}\\
&\{8, 12, 13, 15, 16, 18, 19, 23\}\\
&\{8, 12, 14, 15, 16, 17, 19, 23\}\\
&\{8, 13, 14, 15, 16, 17, 18, 23\}\\
\hline
Subcollection & $\mathcal{M}_{23}(9)$\\
\hline
\multirow{16}{*}{$C$}&\{9, 10, 11, 12, 16, 20, 21, 22, 23\}\\
&\{9, 10, 11, 13, 16, 19, 21, 22, 23\}\\
&\{9, 10, 11, 14, 16, 18, 21, 22, 23\}\\
&\{9, 10, 11, 15, 16, 17, 21, 22, 23\}\\
&\{9, 10, 12, 13, 16, 19, 20, 22, 23\}\\
&\{9, 10, 12, 14, 16, 18, 20, 22, 23\}\\
&\{9, 10, 12, 15, 16, 17, 20, 22, 23\}\\
&\{9, 10, 13, 14, 16, 18, 19, 22, 23\}\\
&\{9, 10, 13, 15, 16, 17, 19, 22, 23\}\\
&\{9, 10, 14, 15, 16, 17, 18, 22, 23\}\\
&\{9, 11, 12, 13, 16, 19, 20, 21, 23\}\\
&\{9, 11, 12, 14, 16, 18, 20, 21, 23\}\\
&\{9, 11, 12, 15, 16, 17, 20, 21, 23\}\\
&\{9, 11, 13, 14, 16, 18, 19, 21, 23\}\\
&\{9, 11, 13, 15, 16, 17, 19, 21, 23\}\\
&\{9, 11, 14, 15, 16, 17, 18, 21, 23\}\\
\hline
\multirow{4}{*}{$D$} &\{9, 12, 13, 14, 16, 18, 19, 20, 23\}\\
&\{9, 12, 13, 15, 16, 17, 19, 20, 23\}\\
&\{9, 12, 14, 15, 16, 17, 18, 20, 23\}\\
&\{9, 13, 14, 15, 16, 17, 18, 19, 23\}\\
\hline
\end{tabular}
\end{minipage}
\hfill
\begin{minipage}[t]{0.49\textwidth}
\centering
\scriptsize
\begin{tabular}{|c|c|}
\hline
Subcollection & $\mathcal{M}^{(1)}_{25}(9)$\\
\hline
\multirow{25}{*}{$W$}&\{9, 10, 11, 12, 17, 22, 23, 24, 25\}\\
&\{9, 10, 11, 13, 17, 21, 23, 24, 25\}\\
&\{9, 10, 11, 14, 17, 20, 23, 24, 25\}\\
&\{9, 10, 11, 15, 17, 19, 23, 24, 25\}\\
&\{9, 10, 11, 16, 17, 18, 23, 24, 25\}\\
&\{9, 10, 12, 13, 17, 21, 22, 24, 25\}\\
&\{9, 10, 12, 14, 17, 20, 22, 24, 25\}\\
&\{9, 10, 12, 15, 17, 19, 22, 24, 25\}\\
&\{9, 10, 12, 16, 17, 18, 22, 24, 25\}\\
&\{9, 10, 13, 14, 17, 20, 21, 24, 25\}\\
&\{9, 10, 13, 15, 17, 19, 21, 24, 25\}\\
&\{9, 10, 13, 16, 17, 18, 21, 24, 25\}\\
&\{9, 10, 14, 15, 17, 19, 20, 24, 25\}\\
&\{9, 10, 14, 16, 17, 18, 20, 24, 25\}\\
&\{9, 10, 15, 16, 17, 18, 19, 24, 25\}\\
&\{9, 11, 12, 13, 17, 21, 22, 23, 25\}\\
&\{9, 11, 12, 14, 17, 20, 22, 23, 25\}\\
&\{9, 11, 12, 15, 17, 19, 22, 23, 25\}\\
&\{9, 11, 12, 16, 17, 18, 22, 23, 25\}\\
&\{9, 11, 13, 14, 17, 20, 21, 23, 25\}\\
&\{9, 11, 13, 15, 17, 19, 21, 23, 25\}\\
&\{9, 11, 13, 16, 17, 18, 21, 23, 25\}\\
&\{9, 11, 14, 15, 17, 19, 20, 23, 25\}\\
&\{9, 11, 14, 16, 17, 18, 20, 23, 25\}\\
&\{9, 11, 15, 16, 17, 18, 19, 23, 25\}\\
\hline
Subcollection&$\mathcal{M}^{(1)}_{25}(10)$\\
\hline
$X$ & \{10, 11, 12, 13, 14, 21, 22, 23, 24, 25\}\\
 &\{10, 11, 12, 13, 15, 20, 22, 23, 24, 25\}\\
 &\{10, 11, 12, 13, 16, 19, 22, 23, 24, 25\}\\
 &\{10, 11, 12, 13, 17, 18, 22, 23, 24, 25\}\\
$X$ & \{10, 11, 12, 14, 15, 20, 21, 23, 24, 25\}\\
 &\{10, 11, 12, 14, 16, 19, 21, 23, 24, 25\}\\
 &\{10, 11, 12, 14, 17, 18, 21, 23, 24, 25\}\\
$X$ & \{10, 11, 12, 15, 16, 19, 20, 23, 24, 25\}\\
 &\{10, 11, 12, 15, 17, 18, 20, 23, 24, 25\}\\
$X$ &\{10, 11, 12, 16, 17, 18, 19, 23, 24, 25\}\\
$X$ &\{10, 11, 13, 14, 15, 20, 21, 22, 24, 25\}\\
 &\{10, 11, 13, 14, 16, 19, 21, 22, 24, 25\}\\
 &\{10, 11, 13, 14, 17, 18, 21, 22, 24, 25\}\\
$X$ & \{10, 11, 13, 15, 16, 19, 20, 22, 24, 25\}\\
&\{10, 11, 13, 15, 17, 18, 20, 22, 24, 25\}\\
$X$ & \{10, 11, 13, 16, 17, 18, 19, 22, 24, 25\}\\
$X$ & \{10, 11, 14, 15, 16, 19, 20, 21, 24, 25\}\\
&\{10, 11, 14, 15, 17, 18, 20, 21, 24, 25\}\\
$X$ & \{10, 11, 14, 16, 17, 18, 19, 21, 24, 25\}\\
$X$ & \{10, 11, 15, 16, 17, 18, 19, 20, 24, 25\}\\
$X$ & \{10, 12, 13, 14, 15, 20, 21, 22, 23, 25\}\\
 &\{10, 12, 13, 14, 16, 19, 21, 22, 23, 25\}\\
 &\{10, 12, 13, 14, 17, 18, 21, 22, 23, 25\}\\
$X$ & \{10, 12, 13, 15, 16, 19, 20, 22, 23, 25\}\\
 &\{10, 12, 13, 15, 17, 18, 20, 22, 23, 25\}\\
$X$ & \{10, 12, 13, 16, 17, 18, 19, 22, 23, 25\}\\
$X$ & \{10, 12, 14, 15, 16, 19, 20, 21, 23, 25\}\\
 & \{10, 12, 14, 15, 17, 18, 20, 21, 23, 25\}\\
$X$ & \{10, 12, 14, 16, 17, 18, 19, 21, 23, 25\}\\
$X$ & \{10, 12, 15, 16, 17, 18, 19, 20, 23, 25\}\\
\hline
\end{tabular}
\end{minipage}

\caption{Sets in $\mathcal{M}_{23}(8) \cup \mathcal{M}_{23}(9)$ and $\mathcal{M}^{(1)}_{25}(9)\cup \mathcal{M}^{(1)}_{25}(10)$.}
\label{b4}
\end{table}

We would like to map sets $F$ in $\mathcal{M}_{23}(8)$ of the form $\{8, \ldots, 23\}$ to corresponding sets $E$ in $\mathcal{M}^{(1)}_{25}(9)$ of the form $\{9, \ldots, 25\}$. The change of the minimum from $8$ to $9$ and of the maximum from $23$ to $25$, in combination with symmetry and maximality, suggests that we should add $1$ to the smaller half of $F$, add $2$ to the larger half of $F$, and then append $(9+25)/2 = 17$. This leads to the bijection from $A$ into $W$:
\begin{equation}\label{e60}F\ \mapsto\ (\{a\in F: a < s_F\}+1)\cup \{s_F + 3/2\}\cup (\{a\in F: a > s_F\}+2).\end{equation}
Similarly, we would like to map sets $F$ in $\mathcal{M}^{(1)}_{25}(10)$  to corresponding sets $E$ in $\mathcal{M}_{23}(9)$. The change of the minimum from $10$ to $9$ and of the maximum from $25$ to $23$, in combination with symmetry and the decrease in cardinality by $1$, suggests that we should subtract $1$ from the smaller half of $F$, subtract $2$ from the larger half of $F$, discard two integers in the middle, and then append $(23+9)/2 = 16$. 
This leads to the bijection from $X$ into $C$:
\begin{equation}\label{e61}F\ \mapsto\ (\{a\in F: a\le \min_{4} F\} - 1)\cup\{(s_F-3)/2\}\cup (\{a\in F: a \ge \max_{4} F\}-2).\end{equation}

Observe that a set $F\in \mathcal{M}_{23}(8)$ is in $A$ if and only if $\min_2 F - \min F \le 2$; a set $F\in \mathcal{M}_{23}(9)$ is in $C$ if and only if $\min_2 F-\min F\le 2$; finally, a set $F\in \mathcal{M}^{(1)}_{25}(10)$ is in $X$ if and only if $\min_{5} F - \min_{4}F = 1$. These motivates the definition: given a collection $\mathcal{A}$ of sets of numbers, define
$$\mathcal{A}^{(t_1, \le d_1), \ldots, (t_p, \le d_p)} \ :=\ \{F\in\mathcal{A}\,:\, \min_{t_i+1}-\min_{t_i} \le d_i,\mbox{ for all }i\le p\}.$$
The inequality $\le$ can be replaced by $=$, $\ge$, $<$, and $>$.
In particular, 
\begin{align*}
\mathcal{M}_{2n+3}(k)^{(1, \le 2)}& := \{F\in \mathcal{M}_{2n+3}(k)\,:\, \min_2 F - \min F \le 2\},\\
\mathcal{M}_{2n+3}(k+1)^{(1, \le 2)}& := \{F\in \mathcal{M}_{2n+3}(k+1)\,:\, \min_2 F - \min F \le 2\},\\
\mathcal{M}_{2n+5}(k+1)^{(1, \le 2)}& := \{F\in \mathcal{M}_{2n+5}(k+1)\,:\, \min_2 F - \min F \le 2\},\\
& \mbox{ which is the same as }\mathcal{M}^{(1)}_{2n+5}(k+1)\mbox{ used in \eqref{e45}, and}\\
\mathcal{M}_{2n+5}(k+2)^{(1, \le 2), \left(\frac{k}{2}, =1\right)} & := \left\{F\in \mathcal{M}_{2n+5}(k+2)\,:\, \begin{matrix}\min_2 F-\min F\le 2,\mbox{ and}\\ \min_{k/2+1} F - \min_{k/2} F = 1\end{matrix}\right\}.
\end{align*}
We can now write the maps \eqref{e60} and \eqref{e61} for general $n\ge 2$ and even $k\ge 6$:
\begin{align*}
&\Theta_{n,k}: \mathcal{M}_{2n+3}(k)^{(1, \le 2)}\ \rightarrow\ \mathcal{M}_{2n+5}(k+1)^{(1,\le 2)},\\
&\quad F\ \mapsto\ (\{a\in F: a < s_F\}+1)\cup \{s_F + 3/2\}\cup (\{a\in F: a > s_F\}+2)
\end{align*}
and 
\begin{align*}
&\Gamma_{n,k}:     \mathcal{M}_{2n+5}(k+2)^{(1, \le 2), \left(\frac{k}{2}, =1\right)}\ \rightarrow\ \mathcal{M}_{2n+3}(k+1)^{(1, \le 2)},\\
&\quad F\ \mapsto\ (\{a\in F: a\le \min_{k/2} F\} - 1)\cup\{(s_F-3)/2\}\cup (\{a\in F: a \ge \max_{k/2} F\}-2).
\end{align*}

Next, let us clean up Table \ref{b4} by removing the subcollections $A$, $C$, $W$, and $X$ to obtain Table \ref{b5}, which consists of 
\begin{align*}
    \mathcal{M}_{23}(8)^{(1, \ge 3)} &:= \{F\in \mathcal{M}_{23}(8)\,:\, \min_2 F - \min F \ge 3\},\\
\mathcal{M}_{23}(9)^{(1, \ge 3)} &:= \{F\in \mathcal{M}_{23}(9)\,:\, \min_2 F - \min F \ge 3\},\mbox{ and}\\
\mathcal{M}_{25}(10)^{(1, \le 2), \left(4, \ge 2\right)} & := \left\{F\in \mathcal{M}_{25}(10)\,:\, \begin{matrix}\min_2 F-\min F\le 2,\mbox{ and}\\ \min_{5} F - \min_{4} F \ge 2\end{matrix}\right\}.
\end{align*}

\begin{table}[H]
\centering

\begin{minipage}[t]{0.49\textwidth}
\centering
\scriptsize
\begin{tabular}{|c|}
\hline
$\mathcal{M}_{23}(8)^{(1, \ge 3)}$\\
\hline
\{8, 11, 12, 13, 18, 19, 20, 23\}\\
\{8, 11, 12, 14, 17, 19, 20, 23\}\\
\{8, 11, 12, 15, 16, 19, 20, 23\}\\
\{8, 11, 13, 14, 17, 18, 20, 23\}\\
\{8, 11, 13, 15, 16, 18, 20, 23\}\\
\{8, 11, 14, 15, 16, 17, 20, 23\}\\
\{8, 12, 13, 14, 17, 18, 19, 23\}\\
\{8, 12, 13, 15, 16, 18, 19, 23\}\\
\{8, 12, 14, 15, 16, 17, 19, 23\}\\
\{8, 13, 14, 15, 16, 17, 18, 23\}\\
\hline
$\mathcal{M}_{23}(9)^{(1, \ge 3)}$\\
\hline
\{9, 12, 13, 14, 16, 18, 19, 20, 23\}\\
\{9, 12, 13, 15, 16, 17, 19, 20, 23\}\\
\{9, 12, 14, 15, 16, 17, 18, 20, 23\}\\
\{9, 13, 14, 15, 16, 17, 18, 19, 23\}\\
\hline
\end{tabular}
\end{minipage}
\hfill
\begin{minipage}[t]{0.49\textwidth}
\centering
\scriptsize
\begin{tabular}{|c|}
\hline
$\mathcal{M}_{25}(10)^{(1, \le 2), (4, \ge 2)}$\\
\hline
 \{10, 11, 12, 13, 15, 20, 22, 23, 24, 25\}\\
 \{10, 11, 12, 13, 16, 19, 22, 23, 24, 25\}\\
 \{10, 11, 12, 13, 17, 18, 22, 23, 24, 25\}\\
 \{10, 11, 12, 14, 16, 19, 21, 23, 24, 25\}\\
 \{10, 11, 12, 14, 17, 18, 21, 23, 24, 25\}\\
 \{10, 11, 12, 15, 17, 18, 20, 23, 24, 25\}\\
 \{10, 11, 13, 14, 16, 19, 21, 22, 24, 25\}\\
 \{10, 11, 13, 14, 17, 18, 21, 22, 24, 25\}\\
\{10, 11, 13, 15, 17, 18, 20, 22, 24, 25\}\\
\{10, 11, 14, 15, 17, 18, 20, 21, 24, 25\}\\
 \{10, 12, 13, 14, 16, 19, 21, 22, 23, 25\}\\
 \{10, 12, 13, 14, 17, 18, 21, 22, 23, 25\}\\
 \{10, 12, 13, 15, 17, 18, 20, 22, 23, 25\}\\
  \{10, 12, 14, 15, 17, 18, 20, 21, 23, 25\}\\
\hline
\end{tabular}
\end{minipage}

\caption{Sets in the subcollections $\mathcal{M}_{23}(8)^{(1, \ge 3)}$, $\mathcal{M}_{23}(9)^{(1, \ge 3)}$, and
$\mathcal{M}_{25}(10)^{(1, \le 2), (4, \ge 2)}$.}
\label{b5}
\end{table}

Table \ref{b5} guides us towards splitting $\mathcal{M}_{25}(10)^{(1, \le 2), (4, \ge 2)}$ into
$\mathcal{M}_{25}(10)^{(1, = 1), (4, \ge 2)}$ and $\mathcal{M}_{25}(10)^{(1, = 2), (4, \ge 2)}$ because
\begin{align*}
|\mathcal{M}_{25}(10)^{(1, = 1), (4, \ge 2)}|&\ =\ 10\ =\ |\mathcal{M}_{23}(8)^{(1, \ge 3)}| \mbox{ and }\\
|\mathcal{M}_{25}(10)^{(1, = 2), (4, \ge 2)}|&\ =\ 4\ =\ |\mathcal{M}_{23}(9)^{(1, \ge 3)}|.
\end{align*}
We, therefore, define the two bijections: 
\begin{itemize}
    \item from $\mathcal{M}_{23}(8)^{(1, \ge 3)}$ to $\mathcal{M}_{25}(10)^{(1, = 1), (4, \ge 2)}$:
    \begin{align*}F\ \mapsto\ &\{\min F+2, \min F+3\}\cup \left(\{a\in F\,:\, \min_2 F\le a\le \min_{3} F\} + 1\right) \\ 
    &\cup \{\min_4 F + 2, \max_4 F + 2\}\\
    &\cup \left(\{a\in F\,:\, \max_{3} F\le a\le \max_2 F\}+3\right)\cup \{\max F + 1, \max F + 2\}\mbox{ and}\end{align*}
    \item from $\mathcal{M}_{23}(9)^{(1, \ge 3)}$ to 
$\mathcal{M}_{25}(10)^{(1, = 2), (4, \ge 2)}$:
\begin{align*}F\ \mapsto\ &\{\min F+1, \min F+3\}\cup \left(\{a\in F\,:\, \min_2 F\le a\le \min_{3} F\}+1\right) \\ 
    &\cup \{\min_4 F + 2, \max_4 F +1\}\\
    &\cup \left(\{a\in F\,:\, \max_{3} F\le a\le \max_2 F\}+2\right)\cup \{\max F, \max F + 2\}.\end{align*}
\end{itemize}  
For general $n$ and even $k\ge 6$, we have the maps:
\begin{align*}
&\Delta_{n,k}: \mathcal{M}_{2n+3}(k)^{(1, \ge 3)} \rightarrow \mathcal{M}_{2n+5}(k+2)^{(1, = 1), \left(\frac{k}{2}, \ge 2\right)},\\
    &F\ \mapsto\ \{\min F+2, \min F+3\}\cup(\{a\in F\,:\, \min_2 F\le a \le \min_{k/2-1} F\}+1)\\
    &\cup\{\min_{k/2} F+2, \max_{k/2} F+2\}\\
    &\cup (\{a\in F\,:\, \max_{k/2-1} F\le a \le \max_{2} F\}+3)\cup\{\max F + 1,\max F+2\}
    \end{align*}
and 
\begin{align*}
    &\xi_{n,k}: \mathcal{M}_{2n+3}(k+1)^{(1, \ge 3)} \rightarrow \mathcal{M}_{2n+5}(k+2)^{(1, = 2), \left(\frac{k}{2}, \ge 2\right)},\\
    &F\ \mapsto\ \{\min F+1, \min F+3\}\cup(\{a\in F\,:\,\min_2 F\le a\le  \min_{k/2-1} F\}+1)\\
    &\cup\{\min_{k/2}F+2, \max_{k/2} F+1\}\\
    &\cup(\{a\in F\,:\,\max_{k/2-1} F\le a\le  \max_{2} F\}+2)\cup\{\max F, \max F+2\}.
\end{align*}

\subsubsection{All the bijective maps suggested by the data}
Recall that we want to devise bijective maps that show $|\mathcal{M}_{2n+1}\cup \mathcal{M}_{2n+3}| = |\mathcal{M}_{2n+5}|$. Our data analysis in Subsection \ref{data_analysis}
produces the following maps:
\begin{itemize}
    \item $\Psi_n: \mathcal{M}_{2n+1}\rightarrow\mathcal{M}^{(2)}_{2n+5} = \mathcal{M}^{(1, \ge 3)}_{2n+5}$,
$$F\ \mapsto\ \{\min F\}\cup \left(\{a\in F\,:\, \min F < a < 2n+1\}+2\right)\cup \{2n+5\},$$
\item $\Theta_{n,k}: \mathcal{M}_{2n+3}(k)^{(1, \le 2)}\ \rightarrow\ \mathcal{M}_{2n+5}(k+1)^{(1,\le 2)}$,
$$F\ \mapsto\ (\{a\in F: a < s_F\}+1)\cup \{s_F + 3/2\}\cup (\{a\in F: a > s_F\}+2),$$
\item $\Gamma_{n,k}:     \mathcal{M}_{2n+5}(k+2)^{(1, \le 2), \left(\frac{k}{2}, =1\right)}\ \rightarrow\ \mathcal{M}_{2n+3}(k+1)^{(1, \le 2)}$,
$$F\ \mapsto\ (\{a\in F: a\le \min_{k/2} F\} - 1)\cup\{(s_F-3)/2\}\cup (\{a\in F: a \ge \max_{k/2} F\}-2),$$
\item $\Delta_{n,k}: \mathcal{M}_{2n+3}(k)^{(1, \ge 3)} \rightarrow \mathcal{M}_{2n+5}(k+2)^{(1, = 1), \left(\frac{k}{2}, \ge 2\right)}$,
\begin{align*}
    &F\ \mapsto\ \{\min F+2, \min F+3\}\cup(\{a\in F\,:\, \min_2 F\le a \le \min_{k/2-1} F\}+1)\\
    &\cup\{\min_{k/2} F+2, \max_{k/2} F+2\}\\
    &\cup (\{a\in F\,:\, \max_{k/2-1} F\le a \le \max_{2} F\}+3)\cup\{\max F + 1,\max F+2\},
    \end{align*}
\item $\xi_{n,k}: \mathcal{M}_{2n+3}(k+1)^{(1, \ge 3)} \rightarrow \mathcal{M}_{2n+5}(k+2)^{(1, = 2), \left(\frac{k}{2}, \ge 2\right)}$,
\begin{align*}
    &F\ \mapsto\ \{\min F+1, \min F+3\}\cup(\{a\in F\,:\,\min_2 F\le a\le  \min_{k/2-1} F\}+1)\\
    &\cup\{\min_{k/2}F+2, \max_{k/2} F+1\}\\
    &\cup(\{a\in F\,:\,\max_{k/2-1} F\le a\le  \max_{2} F\}+2)\cup\{\max F, \max F+2\}.
\end{align*}
\end{itemize}
We prove that all these maps are bijections in the next subsection. If they were, we would have 
\begin{align*}&|\mathcal{M}_{2n+1}| + \sum_{k=6, 2|k}^\infty|\mathcal{M}_{2n+3}(k)^{(1, \le 2)}| + \sum_{k=6, 2|k}^\infty |\mathcal{M}_{2n+3}(k+1)^{(1, \le 2)}|\\
&+ \sum_{k=6,2|k}^\infty|\mathcal{M}_{2n+3}(k)^{(1, \ge 3)}| + \sum_{k=6, 2|k}^\infty|\mathcal{M}_{2n+3}(k+1)^{(1, \ge 3)}| \\
&\ =\ |\mathcal{M}^{(1, \ge 3)}_{2n+5}|+\sum_{k=6, 2|k}^\infty|\mathcal{M}_{2n+5}(k+1)^{(1,\le 2)}| + \sum_{k=6, 2|k}^\infty|\mathcal{M}_{2n+5}(k+2)^{(1, \le 2), \left(\frac{k}{2}, =1\right)}|\\
&+ \sum_{k=6,2|k}^\infty|\mathcal{M}_{2n+5}(k+2)^{(1, = 1), (\frac{k}{2}, \ge 2)}| + \sum_{k=6, 2|k}^\infty|\mathcal{M}_{2n+5}(k+2)^{(1, = 2), (\frac{k}{2}, \ge 2)}|.\end{align*}
It follows that
\begin{align*}
&|\mathcal{M}_{2n+1}| + \sum_{k=6}^\infty |\mathcal{M}_{2n+3}(k)^{(1,\le 2)}| + \sum_{k=6}^\infty |\mathcal{M}_{2n+3}(k)^{(1,\ge 3)}|\\
&\ =\ |\mathcal{M}^{(1, \ge 3)}_{2n+5}| + \sum_{k=6, 2|k}^\infty|\mathcal{M}_{2n+5}(k+1)^{(1, \le 2)}| + \sum_{k=6, 2|k}^\infty |\mathcal{M}_{2n+5}(k+2)^{(1, \le 2)}|.
\end{align*}
Therefore,
$$|\mathcal{M}_{2n+1}| + \sum_{k=6}^\infty |\mathcal{M}_{2n+3}(k)|\ =\ |\mathcal{M}^{(1, \ge 3)}_{2n+5}| + \sum_{k=7}^\infty|\mathcal{M}_{2n+5}(k)^{(1,\le 2)}|.$$

To prove $|\mathcal{M}_{2n+1}\cup \mathcal{M}_{2n+3}| = |\mathcal{M}_{2n+5}|$, it remains to verify that
\begin{equation}\label{e65}\sum_{k=2}^5|\mathcal{M}_{2n+3}(k)|\ =\ \sum_{k=4}^6|\mathcal{M}_{2n+5}(k)^{(1, \le 2)}|.\end{equation}
The lower bound for $k$ on the right side of \eqref{e65} is $4$ because if $k\le 3$, then for $F\in\mathcal{M}_{2n+5}(k)^{(1, \le 2)}$, we witness
$$\min_2 F\ \le\ \min F + 2\ =\ k + 2\ \le 5\ <\ 2n+3 \ \le\ \max_2 F,$$
which contradicts $|F| = k\le 3$.

\begin{proof}[Proof of \eqref{e65}]
    For $n = 2$,
$$\mathcal{M}_{7}(2) \ =\ \{\{2, 7\}\}, \mathcal{M}_7(3)\ =\ \{\{3, 5, 7\}\}, \mathcal{M}_7(4)\ =\ \{\{4, 5, 6, 7\}\},\mathcal{M}_{7}(5)\ =\ \emptyset,$$
and
$$        
\mathcal{M}^{(1)}_9(4)\ =\ \{\{4, 5, 8, 9\}, \{4, 6, 7, 9\}\}, \mathcal{M}^{(1)}_9(5) \ =\ \{\{5, 6, 7, 8, 9\}\}, \mathcal{M}^{(1)}_9(6)\ =\ \emptyset,
$$
so
\eqref{e65} holds for $n = 2$. We assume $n\ge 3$.

We have
\begin{align*}
    \mathcal{M}_{2n+5}(4)^{(1, \le 2)}&\ =\ \{\{4, 5, 2n+4, 2n+5\}, \{4, 6, 2n+3, 2n+5\}\},\\
    \mathcal{M}_{2n+5}(5)^{(1, \le 2)}&\ =\ \{\{5, 6, n+5, 2n+4, 2n+5\}, \{5, 7, n+5, 2n+3, 2n+5\}\},\mbox{ and}\\
    \mathcal{M}_{2n+5}(6)^{(1, \le 2)}&\ =\ \{\{6, 7, i, 2n+11-i, 2n+4, 2n+5\}\,:\, 8\le i\le n+5 \}\\
    &\ \cup\ \{\{6, 8, i, 2n+11-i, 2n+3, 2n+5\}\,:\, 9\le i\le n+5\}.
\end{align*}
Hence, 
\begin{equation}\label{e67}|\mathcal{M}_{2n+5}(4)^{(1, \le 2)}|+|\mathcal{M}_{2n+5}(5)^{(1, \le 2)}|+|\mathcal{M}_{2n+5}(6)^{(1,\le 2)}|\ =\ 2 + 2 + (2n-5) \ =\ 2n-1.\end{equation}

Meanwhile, 
\begin{align*}
    \mathcal{M}_{2n+3}(2)&\ =\ \{\{2, 2n+3\}\}, \quad \mathcal{M}_{2n+3}(3)\ =\ \{\{3, n+3, 2n+3\}\},\\
    \mathcal{M}_{2n+3}(4)&\ =\ \{\{4, i, 2n+7-i, 2n+3\}\,:\, 5\le i\le n+3\}, \mbox{ and}\\
    \mathcal{M}_{2n+3}(5)&\ =\ \{\{5, i, n+4, 2n+8-i, 2n+3\}\,:\, 6\le i\le n+3\}.
\end{align*}
Hence, 
\begin{align}\label{e68}|\mathcal{M}_{2n+3}(2)|+|\mathcal{M}_{2n+3}(3)|+|\mathcal{M}_{2n+3}(4)| + |\mathcal{M}_{2n+3}(5)|&\ =\ 1 + 1 + (n-1) +(n-2)\nonumber\\
&\ =\ 2n-1.\end{align}
Due to \eqref{e67} and \eqref{e68}, we have proved \eqref{e65}.
\end{proof}

\subsubsection{Verification of bijective maps}
We show that $\Psi_n$, $\Theta_{n,k}$, $\Gamma_{n,k}$, and $\Delta_{n,k}$ are bijective. Bijectivity of $\xi_{n,k}$ is moved to Appendix \ref{bi-xi} because the proof is similar to the proof for $\Delta_{n,k}$. 
\begin{lem}\label{kp1}
   For each $n\ge 2$, the map $\Psi_n: \mathcal{M}_{2n+1}\rightarrow \mathcal{M}^{(2)}_{2n+5}$ is a bijection. Consequently, we have $|\mathcal{M}_{2n+1}| = |\mathcal{M}^{(2)}_{2n+5}|$.
\end{lem}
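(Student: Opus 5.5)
The plan is to verify directly that $\Psi_n$ is well defined, and then to exhibit an explicit inverse $\Phi_n$ and check that $\Phi_n$ is well defined and inverts $\Psi_n$ on both sides. The counting statement then follows immediately.

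\emph{Well-definedness of $\Psi_n$.} Fix $F\in\mathcal{M}_{2n+1}$ and let $k=\min F=|F|$. First I note $k\ge 2$: if $k=1$ then $F=\{1\}$, which does not contain $2n+1\ge 5$. So $\min F<2n+1$, and $\Psi_n(F)$ is a disjoint union of $\{k\}$, the shifted middle $M+2$ (where $M=\{a\in F: k<a<2n+1\}$), and $\{2n+5\}$. Hence $|\Psi_n(F)|=|F|=k=\min\Psi_n(F)$, and $2n+5\in\Psi_n(F)\subset[2n+5]$.

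For symmetry I will use \eqref{e1}. We have $s_{\Psi_n(F)}=(k+2n+5)/2=s_F+2$. The reflection $x\mapsto 2s_F-x$ fixes $F$ and swaps its endpoints, so it maps $M$ onto $M$. Consequently $a+2\mapsto 2(s_F+2)-(a+2)=(2s_F-a)+2$ maps $M+2$ onto itself. The endpoints $k$ and $2n+5$ are also swapped by the new reflection.

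Finally, the gap condition holds. If $M\neq\emptyset$, then $\min_2\Psi_n(F)=\min M+2\ge k+3$. If $M=\emptyset$, then $\min_2\Psi_n(F)=2n+5\ge k+3$, since $k\le 2n$. So $\Psi_n(F)\in\mathcal{M}^{(2)}_{2n+5}$.

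\emph{The inverse map.} For $E\in\mathcal{M}^{(2)}_{2n+5}$, define
$$\Phi_n(E)=\{\min E\}\cup(\{a\in E:\min E<a<2n+5\}-2)\cup\{2n+1\}.$$
Again $\min E=|E|\ge 2$, because $E\ni 2n+5$. The main point, and the only real obstacle, is showing that the shifted middle stays strictly between $\min E$ and $2n+1$, so that no collisions occur and the size is preserved.

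The lower side uses the gap hypothesis: every middle element is at least $\min_2 E\ge\min E+3$, so after subtracting $2$ it exceeds $\min E$. The upper side uses symmetry: $\max_2 E=2s_E-\min_2E\le(\min E+2n+5)-(\min E+3)=2n+2$, so every middle element minus $2$ is at most $2n<2n+1$.

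With this in hand, the same computations as above (the centre drops by $2$, the reflection commutes with the shift) show that $\Phi_n(E)$ is symmetric maximal Schreier with maximum $2n+1$, so $\Phi_n(E)\in\mathcal{M}_{2n+1}$. By construction, $\Phi_n\circ\Psi_n$ and $\Psi_n\circ\Phi_n$ are the identity maps, since they add and subtract $2$ on the middle part and fix the minimum. Hence $\Psi_n$ is a bijection, and $|\mathcal{M}_{2n+1}|=|\mathcal{M}^{(2)}_{2n+5}|$.
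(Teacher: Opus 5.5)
Your proposal is correct and follows essentially the same route as the paper. You verify well-definedness of $\Psi_n$ directly, and your inverse is exactly the paper's preimage $\{\min E\}\cup\bigl((E\setminus\{\min E,2n+5\})-2\bigr)\cup\{2n+1\}$, with the same key step $\max_2 E\le 2n+2$ from symmetry and the gap condition. You are somewhat more explicit than the paper: you treat the empty-middle case separately, rule out a collision with $\min E$ on the lower side, and check both compositions rather than citing injectivity.
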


\begin{proof}
First, we show that $\Psi_n: \mathcal{M}_{2n+1}\rightarrow\mathcal{M}^{(2)}_{2n+5} = \mathcal{M}^{(1, \ge 3)}_{2n+5}$,
$$F\ \mapsto\ \{\min F\}\cup \left(\{a\in F\,:\, \min F < a < 2n+1\}+2\right)\cup \{2n+5\}$$
is well-defined. Let $F\in \mathcal{M}_{2n+1}$. Since $2n+1\ge 5$, the set $F$ cannot be $\{2n+1\}$; that is, $F$ must have at least $2$ elements, and $\min_2 F- \min F\ge 1$. We confirm that
$$\min_2\Psi_n(F) - \min \Psi_n(F)\ \ge\ (\min_2 F + 2) - \min F \ \ge\ 3,\quad 2n+5\in \Psi_n(F), \mbox{ and}$$
$$\min \Psi_n(F) \ =\ \min F\ =\ |F|\ =\ |\Psi_n(F)|.$$
Furthermore, the symmetry of $F$ guarantees the symmetry of $\Psi_n(F)$.
Hence, we have $\Psi_n(F)\in \mathcal{M}^{(2)}_{2n+5}$, and $\Psi_n$ is well-defined.

Next, it follows from the definition that $\Psi_n$ is injective. We verify that $\Psi_n$ is surjective. Pick $E\subset [2n+5]$ such that $E$ is symmetric, $\min E = |E|$, $2n+5\in E$, and $\min_2 E - \min E\ge 3$. Define
$$F \ :=\ (E\backslash \{\min E, 2n+5\}-2)\cup \{\min E, 2n+1\}.$$
Since $E$ is symmetric, it follows from $\min_2 E - \min E\ge 3$ that $\max_2 E \le 2n+2$, so $\max F = 2n+1$. Furthermore, we have
$$\min F \ =\ \min E\ =\ |E| \ =\ |F|,$$
and due to the symmetry of $E$,
\begin{align*}
&\max F + \min F - F\\
&\ =\ 2n+1+\min E - ((E\backslash \{\min E, 2n+5\}-2)\cup \{\min E, 2n+1\})\\
&\ =\ (2n+1+\min E - (E\backslash\{\min E, 2n+5\}-2))\cup  \{\min E, 2n+1\}\\
&\ =\ (2n+3+\min E - E\backslash\{\min E, 2n+5\})\cup \{\min E, 2n+1\}\\
&\ =\ ((2n+5+\min E - E\backslash\{\min E, 2n+5\})-2)\cup \{\min E, 2n+1\}\\
&\ =\ (E\backslash \{\min E, 2n+5\} - 2)\cup \{\min E, 2n+1\}\ =\ F.
\end{align*}
Hence, the set $F$ is symmetric, and thus, $F\in \mathcal{M}_{2n+1}$. By the definition of $\Psi_n$, we have $\Psi_n(F) = E$. Therefore, $\Psi_n$ is indeed surjective.
\end{proof}

\begin{lem}
    For $n\ge 2$ and even $k\ge 6$, the map $\Theta_{n,k}$ is a bijection. As a result, 
    $$|\mathcal{M}_{2n+3}(k)^{(1, \le 2)}|\ =\ |\mathcal{M}_{2n+5}(k+1)^{(1, \le 2)}|.$$
\end{lem}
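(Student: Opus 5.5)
The plan is to follow the template of Lemma \ref{kp1}: check that $\Theta_{n,k}$ is well-defined, note that it is injective, and then prove surjectivity by writing down an explicit inverse.

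\emph{Setup.} Fix $n\ge 2$ and even $k\ge 6$, and let $F\in\mathcal{M}_{2n+3}(k)^{(1,\le 2)}$. Since $|F|=k$ is even and $F$ is symmetric, $s_F=(k+2n+3)/2$ is a half-integer not in $F$. So $F$ splits into a lower half $L=\{a\in F: a<s_F\}$ and an upper half $U=\{a\in F:a>s_F\}$. Each half has $k/2$ elements, and $U=2s_F-L$.

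\emph{Well-definedness.} Put $E=\Theta_{n,k}(F)=(L+1)\cup\{s_F+3/2\}\cup(U+2)$. The new point $s_F+3/2=(k+2n+6)/2$ is an integer. I will check the required properties in turn.
\begin{itemize}
\item The three pieces are disjoint and correctly ordered: $\max(L+1)\le s_F-1/2+1<s_F+3/2$, and $\min(U+2)\ge s_F+1/2+2>s_F+3/2$.
\item $|E|=k+1$, $\min E=k+1$, and $\max E=2n+5$. Hence $E\subset[2n+5]$, $2n+5\in E$, and $E$ is maximal Schreier.
\item $s_E=(k+1+2n+5)/2=s_F+3/2$. Symmetry then follows from $2s_E-(L+1)=2s_F-L+2=U+2$, so $E=2s_E-E$.
\item $\min_2E-\min E=\min_2F-\min F\le 2$. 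This uses that $\min_2 F\in L$, which holds because $k/2\ge 3$.
\end{itemize}

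\emph{Injectivity.} This is immediate, since $F$ is recovered from $E$ by shifting the lower and upper parts back.

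\emph{Surjectivity.} Take $E\in\mathcal{M}_{2n+5}(k+1)^{(1,\le2)}$. Since $|E|$ is odd and $E$ is symmetric, $s_E=(k+2n+6)/2$ is an integer and belongs to $E$. Define
$$F=(\{a\in E:a<s_E\}-1)\cup(\{a\in E:a>s_E\}-2).$$
Then the following hold.
\begin{itemize}
\item $|F|=k$ and $\min F=k$.
\item $\max F=2n+3$.
\item Symmetry: $F$ is symmetric about $s_E-3/2$, because $2(s_E-3/2)-(x-1)=(2s_E-x)-2$.
\item The gap condition transfers back, again because $k/2\ge 3$ keeps the two smallest elements in the lower part.
\item $\Theta_{n,k}(F)=E$, since $s_F=s_E-3/2$.
\end{itemize}

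The main point needing care is the ordering and disjointness checks: the middle element must lie strictly between the shifted halves in both directions. In the inverse direction this amounts to $\max\{a\in E:a<s_E\}-1<s_E-3/2<\min\{a\in E: a>s_E\}-2$, which holds because these elements are integers at distance at least $1$ from $s_E$. The other delicate step is the bookkeeping with half-integers, where I will use the parity of $k$ explicitly.
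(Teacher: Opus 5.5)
Your proposal is correct and follows the paper's proof essentially step for step. It has the same well-definedness checks (using $k/2\ge 3$ to keep $\min_2$ in the lower half), injectivity by inspection, and the same explicit preimage $F=(\{a\in E:a<s_E\}-1)\cup(\{a\in E:a>s_E\}-2)$ with $s_F=s_E-3/2$; your ordering and half-integer checks, and the observation that $s_E\in E$ because $|E|$ is odd, fill in details the paper leaves implicit.
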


\begin{proof}
    First, we verify that $\Theta_{n,k}: \mathcal{M}_{2n+3}(k)^{(1, \le 2)}\ \rightarrow\ \mathcal{M}_{2n+5}(k+1)^{(1,\le 2)}$,
$$F\ \mapsto\ (\{a\in F: a < s_F\}+1)\cup \{s_F + 3/2\}\cup (\{a\in F: a > s_F\}+2)$$
    is well-defined. Pick $F\in \mathcal{M}_{2n+3}(k)^{(1,\le 2)}$. Since $k\ge 6$, the set
    $\{a\in F: a < s_F\}$ has at least 3 elements. Hence, 
    $$\min_2 \Theta_{n,k}(F) - \min \Theta_{n,k}(F)\ =\ (\min_2 F+1) - (\min F + 1)\ =\ \min_2 F - \min F\ \le\ 2.$$
    Since $\max F = 2n+3$, we have $\max \Theta_{n,k}(F) = 2n+5$. Furthermore,
    $$\min \Theta_{n,k}(F) \ =\ k+1\ =\ |F|+1 \ =\ \Theta_{n,k}(F).$$
    Finally, the set $\Theta_{n,k}(F)$ is symmetric due to the symmetry of $F$. Hence, $\Theta_{n,k}(F)$ is in $\mathcal{M}_{2n+5}(k+1)^{(1,\le 2)}$.
    
    Clearly, the map $\Theta_{n,k}$ is injective by definition. We need to show that $\Theta_{n,k}$ is surjective. Pick $E\in \mathcal{M}_{2n+5}(k+1)^{(1, \le 2)}$. Let 
    $$F\ :=\ \left(\left\{a\in E: a < s_E\right\}-1\right)\cup \left(\left\{a\in E: a > s_E\right\}-2\right).$$
    We have
    \begin{itemize}
        \item $\min_2 F - \min F = (\min_2 E - 1) - (\min E - 1) = \min_2 E - \min E\le 2$, 
         \item $\max F = 2n+3$ because $\max E = 2n+5$,
        \item $\min F = \min E - 1 = k = |E| - 1 = |F|$,  and
        \item due to the symmetry of $E$, the set $F$ is also symmetric. 
    \end{itemize}
    Hence, the set $F$ is in $\mathcal{M}_{2n+3}(k)^{(1, \le 2)}$. Last but not least,
    $$s_F\ =\ \frac{\min F + \max F}{2}\ =\ \frac{\min E + \max E - 3}{2}\ =\ s_E - \frac{3}{2},$$
    so 
    \begin{align*}
    &\Theta_{n,k}(F)\\
    &\ =\ (\{a\in F: a < s_F\}+1)\cup \{s_F + 3/2\}\cup (\{a\in F: a > s_F\}+2)\\
    &\ =\ \left(\left\{a\in F: a \le s_E-2\right\}+1\right)\cup \{s_E\}\cup \left(\left\{a\in F: a \ge s_E-1\right\}+2\right)\\
    &\ =\ \{a\in E: a < s_E\}\cup \{s_E\} \cup \{a\in E: a > s_E\}\ =\ E.
    \end{align*}
    Therefore, the map $\Theta_{n,k}$ is bijective. 
\end{proof}

\begin{lem}
 For $n\ge 2$ and even $k\ge 6$, the map $\Gamma_{n,k}$ is a bijection. As a result,
 $$|\mathcal{M}_{2n+5}(k+2)^{(1, \le 2), \left(\frac{k}{2}, =1\right)}|\ =\ |\mathcal{M}_{2n+3}(k+1)^{(1, \le 2)}|.$$
\end{lem}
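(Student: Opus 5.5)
The plan is to follow the template of the proofs for $\Psi_n$ and $\Theta_{n,k}$: first check that $\Gamma_{n,k}$ is well-defined, then exhibit an explicit inverse. Throughout I read the inserted middle element as $s_F-3/2$. The displayed $(s_F-3)/2$ appears to be a typo: in the example $F=\{10,\ldots,14,21,\ldots,25\}$ we have $s_F=17.5$, and the image must contain $16=s_F-3/2$.

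\textbf{Well-definedness.} Let $F\in\mathcal{M}_{2n+5}(k+2)^{(1,\le 2),(\frac k2,=1)}$. Since $|F|=k+2$ is even and $F$ is symmetric, the halves $\{a\in F:a<s_F\}$ and $\{a\in F:a>s_F\}$ each have $k/2+1$ elements.

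\begin{itemize}
\item The map discards the two middle elements $\min_{k/2+1}F$ and $\max_{k/2+1}F$, shifts the $k/2$ smallest elements down by $1$ and the $k/2$ largest down by $2$, and inserts $s_F-3/2$. So $|\Gamma_{n,k}(F)|=k+1$.
\item The extremes are $\min\Gamma_{n,k}(F)=k+1$ and $\max\Gamma_{n,k}(F)=2n+3$. Hence the new center is $(k+2n+4)/2=s_F-3/2$, exactly the inserted element, and the maximal Schreier condition holds.
\item Symmetry: $2(s_F-3/2)-(x-1)=(2s_F-x)-2$, and $2s_F-x$ lies in the top part of $F$ whenever $x$ lies in the bottom part.
\item Ordering: $\min_{k/2+1}F<s_F$ forces $\min_{k/2+1}F\le s_F-1/2$. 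The hypothesis $\min_{k/2+1}F=\min_{k/2}F+1$ then gives $\min_{k/2}F-1\le s_F-5/2<s_F-3/2$. The mirror inequality on the top side follows by symmetry, so the three pieces really are listed in increasing order.
\item The condition $(1,\le 2)$ survives because $k/2\ge 3$. Thus $\min F$ and $\min_2 F$ are both shifted by $1$.
\end{itemize}

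\textbf{Inverse map.} For $E\in\mathcal{M}_{2n+3}(k+1)^{(1,\le 2)}$ ($|E|$ odd, so $s_E\in E$), define
$$\Lambda(E)=(\{a\in E:a<s_E\}+1)\cup\{\min_{k/2}E+2,\ \max_{k/2}E+1\}\cup(\{a\in E:a>s_E\}+2).$$
This removes $s_E$ and inserts two adjacent-gap middle elements.

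\begin{itemize}
\item With $s=s_E+3/2$, we have $2s-(\min_{k/2}E+2)=\max_{k/2}E+1$, so the inserted pair is symmetric about the new center. The rest of symmetry follows as in the $\Theta_{n,k}$ proof.
\item The new elements sit strictly between the shifted halves: $\min_{k/2}E+1<\min_{k/2}E+2$, and $\max_{k/2}E+1<\max_{k/2}E+2$.
\item They are in increasing order: $\min_{k/2}E+2<\max_{k/2}E+1$ holds because $\min_{k/2}E<s_E<\max_{k/2}E$ with these two elements symmetric about $s_E$, so $\max_{k/2}E-\min_{k/2}E\ge 2$.
\item The size is $k+2$, the minimum is $k+2$, and the maximum is $2n+5$.
\item $\min_{k/2+1}-\min_{k/2}=(\min_{k/2}E+2)-(\min_{k/2}E+1)=1$, and the $(1,\le2)$ gap is unchanged.
\end{itemize}

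Hence $\Lambda(E)$ lies in the domain of $\Gamma_{n,k}$. Finally, verify directly that $\Gamma_{n,k}\circ\Lambda$ and $\Lambda\circ\Gamma_{n,k}$ are identities. This is a bookkeeping check: the shifts $\pm1,\pm2$ cancel, and the discarded and inserted middle elements correspond, using $\min_{k/2+1}F=\min_{k/2}F+1$ in one direction.

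\textbf{Expected obstacle.} The delicate point is the ordering verification in both directions, i.e., that the inserted elements land strictly inside the gap between the shifted halves without collision. This is exactly where the hypothesis $(\frac k2,=1)$ and the parity/symmetry facts ($s_F\notin F$ for even size, $s_E\in E$ for odd size) are used. Once this is settled, the counting statement follows immediately from bijectivity.
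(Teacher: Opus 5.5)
Your proposal is correct and matches the paper's proof. You check well-definedness directly, your $\Lambda$ is exactly the preimage the paper writes down for surjectivity, and your check that $\Lambda\circ\Gamma_{n,k}$ is the identity stands in for the paper's one-line injectivity argument via $\min_{k/2+1}F-\min_{k/2}F=1$. You are also right that the inserted middle element must be $s_F-3/2$ rather than $(s_F-3)/2$, and your ordering and symmetry checks supply the details the paper leaves as routine.
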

\begin{proof}
Consider $\Gamma_{n,k}:     \mathcal{M}_{2n+5}(k+2)^{(1, \le 2), \left(\frac{k}{2}, =1\right)}\ \rightarrow\ \mathcal{M}_{2n+3}(k+1)^{(1, \le 2)}$,
$$F\ \mapsto\ (\{a\in F: a\le \min_{k/2} F\} - 1)\cup\{(s_F-3)/2\}\cup (\{a\in F: a \ge \max_{k/2} F\}-2).$$
Let $F\in \mathcal{M}_{2n+5}(k+2)^{(1, \le 2), \left(\frac{k}{2}, =1\right)}$.
We have
\begin{itemize}
    \item $\max \Gamma_{n,k}(F) = \max F -2 = 2n+5-2 = 2n+3$,
    \item $\min \Gamma_{n,k}(F) = \min F - 1 = |F| - 1 = |\Gamma_{n,k}(F)|$, so $|\Gamma_{n,k}(F)| = k+1$, 
    \item $\min_2 \Gamma_{n,k}(F) - \min \Gamma_{n,k}(F) = (\min_2 F - 1) - (\min F-1) = \min_2 F - \min F \le 2$, and
    \item due to the symmetry of $F$, the set $\Gamma_{n,k}(F)$ is also symmetric.
\end{itemize}
Therefore, $\Gamma_{n,k}$ is well-defined. 

By the definition of $\Gamma_{n,k}$ and the condition that $\min_{k/2+1} F - \min_{k/2} F = 1$ for all $F$ in the domain $\mathcal{M}_{2n+5}(k+2)^{(1, \le 2), \left(\frac{k}{2}, =1\right)}$, the injectivity of $\Gamma_{n,k}$ is guaranteed. We show that $\Gamma_{n,k}$ is surjective. 
Pick $E\in \mathcal{M}_{2n+3}(k+1)^{(1, \le 2)}$. Let
$$F\ :=\ (\{a\in E\,:\, a < s_E\}+1)\cup \{\min_{k/2} E+2, \max_{k/2}E+1\}\cup (\{a\in E\,:\, a > s_E\}+2).$$
It is routine to check that $F\in \mathcal{M}_{2n+5}(k+2)^{(1, \le 2), \left(\frac{k}{2}, =1\right)}$, and furthermore, $\Gamma_{n,k}(F) = E$. This concludes our proof that $\Gamma_{n,k}$ is a bijection. 
\end{proof}

\begin{lem}
For $n\ge 2$ and even $k\ge 6$, the map $\Delta_{n,k}$ is a bijection. As a result,
$$|\mathcal{M}_{2n+3}(k)^{(1, \ge 3)}| \ =\ |\mathcal{M}_{2n+5}(k+2)^{(1, = 1), \left(\frac{k}{2}, \ge 2\right)}|.$$
\end{lem}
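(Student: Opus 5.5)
We follow the same template as for $\Theta_{n,k}$ and $\Gamma_{n,k}$: check that $\Delta_{n,k}$ is well-defined, that it is injective, and that it is surjective by writing down an explicit preimage.

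Fix $F\in \mathcal{M}_{2n+3}(k)^{(1,\ge 3)}$ with $k$ even, so $\min F=k$ and $\max F=2n+3$. Write $F$ as its lower half
$$\min F<\min_2F<\cdots<\min_{k/2}F$$
together with the reflected upper half, where $\max_j F=2s_F-\min_jF$. The image $E:=\Delta_{n,k}(F)$ is built from five blocks: $\{k+2,k+3\}$; the shifted block $\{\min_2F,\dots,\min_{k/2-1}F\}+1$; the pair $\{\min_{k/2}F+2,\ \max_{k/2}F+2\}$; the shifted block $\{\max_{k/2-1}F,\dots,\max_2F\}+3$; and $\{2n+4,2n+5\}$.

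To see that these blocks are disjoint and correctly ordered, I will use $\min_2F\ge k+3$. This gives $\min_2F+1\ge k+4>k+3$. Consecutive elements inside each block keep their gaps, and $\min_{k/2}F+2>\min_{k/2-1}F+1$. Counting the blocks gives
$$|E| = 2+(k/2-2)+2+(k/2-2)+2 = k+2 = \min E.$$
The maximum is $2n+5$. We have $\min_2E-\min E=1$, and $\min_{k/2+1}E-\min_{k/2}E=(\min_{k/2}F+2)-(\min_{k/2-1}F+1)\ge 2$.

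For symmetry, note that $s_E=s_F+3/2$. Then check block by block that $x\mapsto 2s_E-x=2s_F+3-x$ maps each block onto its mirror block. For example, $\min_jF+1$ goes to $\max_jF+2$. This is not quite the $+3$ shift of $\max_jF$, so I must re-index carefully: the upper shifted block consists of $\max_jF+3$ for $2\le j\le k/2-1$, and the reflection of $\min_jF+1$ is $2s_F+2-\min_jF=\max_jF+2$. The main obstacle is exactly this verification, and it is where the definition must be pinned down. I would first recheck the mirror pairing $\min_jF+1\leftrightarrow \max_jF+3$ against the $n=10$ data in Table \ref{b5}, where it does hold: with $F=\{8,11,\dots,20,23\}$ we get $\min E+\max E=35$, and $12+23=35$. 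So the correct statement is $s_E=s_F+2$, and I would use $s_E=(k+2+2n+5)/2=s_F+2$. With this, $\min_jF+1\leftrightarrow\max_jF+3$ pairs correctly, and the middle pair $\min_{k/2}F+2\leftrightarrow\max_{k/2}F+2$ also sums to $2s_F+4=2s_E$. Hence $E\in\mathcal{M}_{2n+5}(k+2)^{(1,=1),(k/2,\ge2)}$.

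Injectivity is immediate, because $F$ is recovered from the blocks of $E$. For surjectivity, take $E$ in the target and define
$$F:=\{k\}\cup(\{\min_3E,\dots,\min_{k/2}E\}-1)\cup\{\min_{k/2+1}E-2,\ \max_{k/2+1}E-2\}\cup(\{\max_{k/2}E,\dots,\max_3E\}-3)\cup\{2n+3\}.$$
Then I will verify the following. First, $|F|=k=\min F$. Second, $\min_2F=\min_3E-1\ge k+3$, which holds because $\min_3E\ge \min_2E+1=k+4$. Third, the ordering is strict, using $\min_{k/2+1}E-\min_{k/2}E\ge2$ and symmetrically on the upper side. Fourth, $F$ is symmetric about $s_E-2$, by the same pairing computation run in reverse. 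Finally, $\Delta_{n,k}(F)=E$. The only delicate point is again the interface between the middle pair and the shifted blocks, which is exactly where the hypothesis $(k/2,\ge 2)$ is needed.
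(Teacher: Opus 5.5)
Your proposal is correct and follows the paper's proof: the same well-definedness checks, injectivity by reading off the blocks, and the same explicit preimage for surjectivity (the paper writes your endpoints $k$ and $2n+3$ as $\min_2E-3$ and $\max_2E-1$). You should delete the false start $s_E=s_F+3/2$; the center shifts by exactly $2$, and that is what makes both the pairing $\min_jF+1\leftrightarrow\max_jF+3$ and the middle pair work.
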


\begin{proof} 
Recall the map
$\Delta_{n,k}: \mathcal{M}_{2n+3}(k)^{(1, \ge 3)} \rightarrow \mathcal{M}_{2n+5}(k+2)^{(1, = 1), \left(\frac{k}{2}, \ge 2\right)}$,
\begin{align*}
    &F\ \mapsto\ \{\min F+2, \min F+3\}\cup(\{a\in F\,:\, \min_2 F\le a \le \min_{k/2-1} F\}+1)\\
    &\cup\{\min_{k/2} F+2, \max_{k/2} F+2\}\\
    &\cup (\{a\in F\,:\, \max_{k/2-1} F\le a \le \max_{2} F\}+3)\cup\{\max F + 1,\max F+2\}.
    \end{align*}
We have 
\begin{align*}
&\min_2\Delta_{n,k}(F)-\min\Delta_{n,k}(F)\ =\ (\min F+3)-(\min F + 2)\ =\ 1,\\
&\min_{k/2+1}\Delta_{n,k}(F)-\min_{k/2}\Delta_{n,k}(F)\ =\ (\min_{k/2}F+2)-(\min_{k/2-1}F+1)\ \ge\ 2,\\
&\max \Delta_{n,k}(F) \ =\ \max F + 2\ =\ 2n+5,\\
&\min \Delta_{n,k}(F)\ =\ k+2\ =\ |F| + 2\ =\ |\Delta_{n,k}(F)|,
\end{align*}
and $\Delta_{n,k}(F)$ is symmetric because $F$ is symmetric. Hence, $\Delta_{n,k}$ is well-defined. 

From the definition, $\Delta_{n,k}$ is clearly injective. We show that $\Delta_{n,k}$ is surjective. Pick $E\in \mathcal{M}_{2n+5}(k+2)^{(1, = 1), \left(\frac{k}{2}, \ge 2\right)}$. Let 
\begin{align*}
F\ :=\ \{\min_2 E-3\}&\cup(\{a\in E\,:\, \min_{3}E\le a\le\min_{k/2}E\}-1)\\
&\cup\{\min_{k/2+1}E-2, \max_{k/2+1}E-2\}\\
&\cup(\{a\in E\,:\, \max_{k/2}E\le a\le \max_3 E\}-3)\cup\{\max_2 E - 1\}
\end{align*}
We verify that
\begin{align*}
    &\min_2 F-\min F\ =\ (\min_3 E - 1) - (\min_2 E-3)\ \ge\ 3,\\
    &\max F\ =\ \max_2 E - 1\ =\ (2n+4) - 1\ =\ 2n+3,\\
    &\min F\ =\ \min_2 E-3\ =\ (k+3)-3\ =\ k\ =\ |F|,
\end{align*}
and $F$ is symmetric because $E$ is symmetric. Hence, $F\in \mathcal{M}_{2n+3}(k)^{(1,\ge 3)}$. Since $\Delta(F) = E$, the map $\Delta_{n,k}$ is surjective. 
\end{proof}

\section{Schreier sets that do not contain a given integer}\label{sec-nointeger}

\subsection{Data analysis and proof ideas}

Table \ref{Data_oneint} gives us the initial terms of the sequences $(|\mathcal{F}_{u,n}|)_{n=1}^\infty$ with $1\le u\le 5$. While the sequences $(|\mathcal{F}_{1,n}|)_{n=1}^\infty$ and $(|\mathcal{F}_{2,n}|)_{n=1}^\infty$ are simple, latter sequences seem mysterious. Suppose we want to guess a linear recurrence for the sequence $(|\mathcal{F}_{3,n}|)_{n=1}^\infty$ based on its initial terms. If $(|\mathcal{F}_{3,n}|)_{n=1}^\infty$ satisfied a linear recurrence of order $5$, i.e., there would exist $(c_i)_{i=1}^5$ such that
$$c_5|\mathcal{F}_{3, n-5}| + c_4|\mathcal{F}_{3,n-4}| + \cdots + c_1|\mathcal{F}_{3,n-1}|\ =\ |\mathcal{F}_{3,n}|, \mbox{ for all large }n,$$ 
then $(c_i)_{i=1}^5$ is a solution to the system of linear equations:
$$\begin{cases}
    2c_5 + 3c_4 + 5c_3 + 8c_2 + 12c_1&\ =\ 17,\\
    3c_5 + 5c_4 + 8c_3 + 12c_2 + 17c_1 &\ =\ 23,\\
    5c_5 + 8c_4 + 12c_3 + 17c_2 + 23c_1&\ =\ 30,\\
    8c_5 + 12c_4 + 17c_3 + 23c_2 + 30c_1 &\ =\ 38,\\
    12c_5 + 17c_4 + 23c_3 + 30c_2 + 38c_1&\ =\ 47.
\end{cases}$$
Solving the system, we obtain 
$$\begin{cases}c_5 &\ =\ 6-c_2 - 3c_1\\ c_4&\ =\ -15 + 3c_2 + 8c_1\\ c_3 &\ =\ 10-3c_2-6c_1\\ c_1, c_2\mbox{ free}.\end{cases}$$
Due to the two free variables, we adjust our guess accordingly: $(|\mathcal{F}_{3,n}|)_{n=1}^\infty$ may satisfy a linear recurrence of order $3$. We then solve the system
$$\begin{cases}
    2c_3 + 3c_2 + 5c_1 &\ =\ 8,\\
    3c_3 + 5c_2 + 8c_1 &\ =\ 12,\\
    5c_3 + 8c_2 + 12c_1 &\ =\ 17
\end{cases}$$
to obtain $(c_1, c_2, c_3) = (3, -3, 1)$. Hence, our conjectured recurrence for $(|\mathcal{F}_{3,n}|)_{n=1}^\infty$ is 
$$|\mathcal{F}_{3,n-3}|- 3|\mathcal{F}_{3,n-2}|+3|\mathcal{F}_{3,n-1}|\ =\ |\mathcal{F}_{3, n}|.$$
Latter terms of $(|\mathcal{F}_{3,n}|)_{n=1}^\infty$ support this conjecture: for instance,
$47-3\cdot 57 + 3\cdot 68 = 80$.

We note from Table \ref{Data_oneint} that
\begin{align*}
|\mathcal{F}_{1,n}|&\ =\ |\mathcal{F}_{1,n-1}|,\\
|\mathcal{F}_{2,n}|&\ =\ 2|\mathcal{F}_{2,n-1}| - |\mathcal{F}_{2,n-2}|,\mbox{ and}\\
|\mathcal{F}_{3,n}|&\ =\ 3|\mathcal{F}_{3,n-1}| - 3|\mathcal{F}_{3,n-2}| + |\mathcal{F}_{3,n-3}|.
\end{align*}
These lead us to Theorem \ref{m2}, which states that for $u\in\mathbb{N}$:
\begin{equation}\label{e90}|\mathcal{F}_{u,n}|\ =\ \sum_{i=1}^u (-1)^{i-1}\binom{u}{i}|\mathcal{F}_{u,n-i}|,\mbox{ whenever }n\ge 2u.\end{equation}

The alternating signs in Recurrence \eqref{e90} naturally suggest a proof based on the inclusion-exclusion principle. Another promising approach is induction on $u$, which would require establishing a relationship between consecutive rows of Table \ref{Data_oneint}. Specifically, our goal is to express each entry in a given row in terms of entries from the row immediately below it (see Lemma \ref{l100}). The next two subsections are devoted to these two different proofs of \eqref{e90}. Interested readers may refer to \cite[Theorem 1.1]{BCF} for an earlier application of the inclusion-exclusion principle to derive a linear recurrence with alternating signs, and to \cite{BGHH, C2, C3} for inductive proofs. Recently, \cite{CV} presented an alternative proof of \cite[Theorem 1.1]{BCF} using the characteristic polynomial method. 

We end this subsection by proving the first statement of Theorem \ref{m2}: for $u\le n\le 2u-1$, we have $|\mathcal{F}_{u,n}| = F_n$. 

\begin{proof}[Proof that $|\mathcal{F}_{u,n}| = F_n$ for $u\le n\le 2u-1$]
    For $k\in \mathbb{N}$, we count the number of $k$-element sets $F$ in $\mathcal{F}_{u,n}$. The Schreier condition requires that $F$ is a subset of $\{k, k+1, \ldots, n\}$. Since $F$ contains $u$, we must have $k\le u$. It follows that each $F$ is uniquely determined by  $k-1$ integers from $\{k, k+1, \ldots, n\}\backslash\{u\}$. Hence, there are
    $\binom{n-k}{k-1}$ such sets $F$, and thus,
    \begin{align*}|\mathcal{F}_{u,n}| \ =\ \sum_{k=1}^{\min\left\{u,\left\lfloor\frac{n+1}{2}\right\rfloor\right\}}\binom{n-k}{k-1}&\ =\ \sum_{k=1}^{\left\lfloor\frac{n+1}{2}\right\rfloor}\binom{n-k}{k-1}\quad\mbox{ because }n\le 2u-1\\
    &\ =\ \sum_{k=0}^{\left\lfloor\frac{n-1}{2}\right\rfloor}\binom{n-1-k}{k}\ =\ F_n\quad\mbox{ by }\eqref{oot}.
    \end{align*}
\end{proof}

\subsection{Proof of \eqref{e90} using the inclusion-exclusion principle}
The following lemma is an analog of \cite[Lemma 2.1]{BCF}.
\begin{lem}\label{iel}
    Fix $u\in \mathbb{N}$ and $n\ge 2u$. Let $G\subset\{n-u+1, \ldots, n-1, n\}$ be nonempty and set 
    $$\mathcal{M}_G\ :=\ \{F\in \mathcal{F}_{u,n}\,:\, F\cap G = \emptyset\}.$$
    Then $|\mathcal{M}_G| = |\mathcal{F}_{u, n-|G|}|$.
\end{lem}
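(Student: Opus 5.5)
The plan is to build an explicit bijection $\mathcal{M}_G\to\mathcal{F}_{u,n-|G|}$ that ``closes the gaps'' left by $G$. Put $m=|G|$. Since $[n]\backslash G$ has exactly $n-m$ elements, there is a unique increasing bijection
$$\varphi:\ [n]\backslash G\ \rightarrow\ [n-m].$$
For $F\in\mathcal{M}_G$ we have $F\subset [n]\backslash G$, so $\varphi(F)\subset [n-m]$ makes sense, and I would define the map by $F\mapsto \varphi(F)$.

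The key observation is where $G$ sits. Every element of $G$ is at least $n-u+1$, and the hypothesis $n\ge 2u$ gives $n-u+1\ge u+1>u$. So every integer $x\le n-u$ lies below $\min G$, and the increasing bijection fixes it: $\varphi(x)=x$. In particular $\varphi(u)=u$.

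Next I check that $\varphi(F)\in \mathcal{F}_{u,n-m}$ for $F\in\mathcal{M}_G$.
\begin{itemize}
\item Since $\varphi$ is injective, $|\varphi(F)|=|F|$.
\item Since $u\in F$, we get $u=\varphi(u)\in\varphi(F)$.
\item Since $u\in F$, we have $\min F\le u$, so $\varphi$ fixes $\min F$. As $\varphi$ is increasing, $\min\varphi(F)=\varphi(\min F)=\min F\ge |F|=|\varphi(F)|$.
\end{itemize}
Hence $\varphi(F)$ is a Schreier subset of $[n-m]$ containing $u$.

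Injectivity is immediate from the injectivity of $\varphi$. For surjectivity, take $E\in\mathcal{F}_{u,n-m}$ and set $F=\varphi^{-1}(E)$. Then $F\subset[n]\backslash G$, so $F\cap G=\emptyset$. Since $u\in E$ and $\varphi^{-1}$ fixes $u$, we get $u\in F$. The Schreier condition transfers by the same argument as above: $\min E\le u$ is fixed by $\varphi^{-1}$, and sizes are preserved. So $F\in\mathcal{M}_G$ and $\varphi(F)=E$.

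I do not expect a real obstacle here. The only point needing care is the observation that $\varphi$ fixes $\min F$, and that relies exactly on $u\in F$ together with $u<\min G$, which is where $n\ge 2u$ is used.
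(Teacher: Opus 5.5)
Your proposal is correct and is essentially the paper's proof: the same increasing bijection $[n]\backslash G\to[n-|G|]$, with $u\in F$ and the Schreier condition transferred because the map fixes everything below $\min G\ge n-u+1>u$. The only difference is cosmetic: you note directly that $\min F\le u<\min G$. This makes the paper's second case ($\min F>\min G$) visibly vacuous, and it replaces the paper's surjectivity inequality $\min F\ge \min E$ with the equality $\min F=\min E$.
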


\begin{proof}
    Let $\eta: [n]\backslash G\rightarrow [n-|G|]$ be the unique increasing bijection. Define the map $\gamma: \mathcal{M}_G\rightarrow \mathcal{F}_{u, n-|G|}$ as
    $\gamma(F) = \eta(F)$.

    First, we show that $\gamma$ is well-defined. Let $F \in \mathcal{M}_G$. Since $u\in F$ and $u < \min G$, we have $\eta(u) = u$, so $u\in \gamma(F)$. If $\min F < \min G$, then $\min \gamma(F) = \min F$, which gives
    $$|\gamma(F)| \ =\ |F| \ \le\ \min F\ =\ \min \gamma(F).$$
    If $\min F > \min G$, then $\min \gamma(F)\ge n-u+1$ and $|F|\le u-1$ because $G\subset\{n-u+1, \ldots, n-1, n\}$ is nonempty and $F\cap G = \emptyset$. In this case, we have
    $$|\gamma(F)|\ =\ |F|\ \le\ u-1\ <\ n-u+1\ \le\ \min \gamma(F).$$
    Therefore, the set $\gamma(F)$ is Schreier and thus, is in $\mathcal{F}_{u, n-|G|}$.

    Injectivity is obvious from the definition of $\gamma$. Let us show surjectivity. Pick 
    $E\in \mathcal{F}_{u, n-|G|}$ and let $F = \gamma^{-1}(E)$. Then $F\cap G = \emptyset$ and $\min F\ge \min E$, so 
    $$|F| \ =\ |E| \ \le\ \min E\ \le\ \min F.$$
    Furthermore, since $u\in E$ and $u < \min G$, we have $u = \eta^{-1}(u)\in F$. Therefore, the set $F$ is in $\mathcal{M}_G$.
\end{proof}

\begin{proof}[Proof of Theorem \ref{m2} using the inclusion-exclusion principle]
    Using the notation from Lemma \ref{iel}, we have that the set
    $$\mathcal{F}_{u,n}\backslash\bigcup_{i=0}^{u-1} \mathcal{M}_{\{n-i\}}\ =\ \{F\in \mathcal{F}_{u,n}\,:\, \{n-u+1, \ldots, n-1, n\}\subset F\},\mbox{ with }n\ge 2u,$$
    is empty because $u\in F$ and $\{n-u+1, \ldots, n-1, n\}\subset F$ imply that
    $$|F|\ \ge\ u+1 \ >\ u\ \ge\ \min F,$$
    violating the Schreier condition. By the inclusion-exclusion principle, we obtain
    \begin{align*}
        0 &\ =\ \left|\mathcal{F}_{u,n}\backslash\bigcup_{i=0}^{u-1} \mathcal{M}_{\{n-i\}}\right|\\
        &\ =\ |\mathcal{F}_{u,n}| - \sum_{i=1}^u (-1)^{i-1}\sum_{G\in \mathcal{G}_i}|\mathcal{M}_{G}|,
    \end{align*}
    where $\mathcal{G}_i = \{G\subset \{n-u+1, \ldots, n-1, n\}: |G| = i\}$. By Lemma \ref{iel}, we have $|\mathcal{M}_{G}| = |\mathcal{F}_{u, n-|G|}|$ and thus, obtain
    \begin{align*}
    |\mathcal{F}_{u,n}| \ =\ \sum_{i=1}^u (-1)^{i-1} \sum_{G\in \mathcal{G}_i}|\mathcal{M}_{G}|
    &\ =\ \sum_{i=1}^u (-1)^{i-1}|\mathcal{G}_i| |\mathcal{F}_{u, n-i}|\\
    &\ =\ \sum_{i=1}^u (-1)^{i-1}\binom{u}{i}|\mathcal{F}_{u, n-i}|,
    \end{align*}
    as desired.
\end{proof}

\subsection{Proof of \eqref{e90} using induction}

The next lemma, which is used in the inductive step, connects two consecutive rows of Table \ref{Data_oneint}.

\begin{lem}\label{l100} For $u\ge 2$ and $n\ge u+1$, we have
    $$|\mathcal{F}_{u,n}| - |\mathcal{F}_{u,n-1}| \ =\ |\mathcal{F}_{u-1, n-2}|.$$
\end{lem}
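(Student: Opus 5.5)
We would prove the identity bijectively. Every set in $\mathcal{F}_{u,n-1}$ already lies in $\mathcal{F}_{u,n}$, because both require $u\in F$ and the Schreier condition, and $[n-1]\subset[n]$. Hence $|\mathcal{F}_{u,n}|-|\mathcal{F}_{u,n-1}|$ counts
$$\mathcal{E}\ :=\ \{F\in\mathcal{F}_{u,n}\,:\, n\in F\}.$$
(For $n\ge u+1$ the element $n$ is distinct from $u$.) The goal is therefore a bijection between $\mathcal{E}$ and $\mathcal{F}_{u-1,n-2}$.

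Imitating Bird's map $g$ from Section \ref{formula-intro}, we define
$$\phi:\mathcal{F}_{u-1,n-2}\rightarrow\mathcal{E},\qquad F\mapsto (F+1)\cup\{n\}.$$

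\emph{Well-definedness.} Since $\max F\le n-2$, we have $\max(F+1)\le n-1$, so $n$ is a genuinely new element and $|\phi(F)|=|F|+1$. Moreover
$$\min\phi(F)\ =\ \min F+1\ \ge\ |F|+1\ =\ |\phi(F)|,$$
so $\phi(F)$ is Schreier. Since $u-1\in F$, we get $u\in\phi(F)$, and $\phi(F)\subset[n]$ contains $n$. Injectivity is immediate because $\phi$ can be undone.

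\emph{Surjectivity.} Given $E\in\mathcal{E}$, set $F:=(E\setminus\{n\})-1$. Then $F\subset[n-2]$ and $u-1\in F$, using $u\ne n$. If $|E|\ge 2$, then
$$\min F\ =\ \min E-1\ \ge\ |E|-1\ =\ |F|,$$
so $F\in\mathcal{F}_{u-1,n-2}$ and $\phi(F)=E$.

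The one step needing care is the degenerate case $|E|=1$, i.e.\ $E=\{n\}$, which would require $u=n$. This is excluded by the hypothesis $n\ge u+1$. Since $u\in E$ and $n\in E$ with $u<n$, every $E\in\mathcal{E}$ has at least two elements, so $F$ is nonempty, contains $u-1\ge 1$ (this is where $u\ge 2$ is used), and the computation above applies. Thus $\phi$ is a bijection, and
$$|\mathcal{F}_{u,n}|-|\mathcal{F}_{u,n-1}|\ =\ |\mathcal{E}|\ =\ |\mathcal{F}_{u-1,n-2}|.$$

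I expect no real obstacle beyond checking these edge conditions. The main point is to confirm that subtracting $1$ preserves the Schreier property exactly because the element $n$ is removed at the same time.
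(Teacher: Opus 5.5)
Your proof is correct and follows the same route as the paper: the map $F\mapsto (F+1)\cup\{n\}$ from $\mathcal{F}_{u-1,n-2}$ onto $\mathcal{F}_{u,n}\setminus\mathcal{F}_{u,n-1}$, with inverse $E\mapsto (E\setminus\{n\})-1$. You also spell out why every such $E$ has at least two elements: $u<n$ forces $|E|\ge 2$, hence $\min E\ge 2$. The paper uses this only implicitly.
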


\begin{proof}
Observe that
$\mathcal{F}_{u, n-1}\subset \mathcal{F}_{u, n}$ and
$$\mathcal{F}_{u,n}\backslash \mathcal{F}_{u,n-1}\ =\ \{F\subset[n]\,:\,F\mbox{ is Schreier and } u, n\in F\}.$$
Define the map $\gamma: \mathcal{F}_{u-1, n-2}\rightarrow\mathcal{F}_{u,n}\backslash \mathcal{F}_{u,n-1}$ as $\gamma(F) = (F+1)\cup \{n\}$. Then 
$$u\in \gamma(F)\mbox{ and }\min \gamma(F) \ =\ \min F + 1 \ \ge\ |F| + 1\ =\ |\gamma(F)|,$$
so $\gamma(F)\in \mathcal{F}_{u,n}\backslash \mathcal{F}_{u,n-1}$, and $\gamma$ is well-defined. 

Injectivity is immediate from the definition of $\gamma$. We show surjectivity. Let $E\in \mathcal{F}_{u,n}\backslash \mathcal{F}_{u,n-1}$ and let
$$F\ :=\ \gamma^{-1}(E)\ =\ (E\backslash \{n\})-1.$$
Then 
$$u-1\in F, \max F \ \le\ (n-1)-1 \ =\ n-2,\mbox{ and }\min F\ =\ \min E - 1\ \ge\ |E| - 1\ =\ |F|.$$
Hence, the set $F$ is in $\mathcal{F}_{u-1, n-2}$. That $\gamma(F) = E$ shows that $\gamma$ is surjective. 
\end{proof}

\begin{proof}[Proof of Theorem \ref{m2} using induction]
We wish to prove the following statement for each $u\ge 1$:
$$\mbox{Statement }P(u):\quad\quad\quad \mbox{For all }n\ge 2u, \mbox{ we have }|\mathcal{F}_{u,n}| \ =\ \sum_{i=1}^u (-1)^{i-1}\binom{u}{i}|\mathcal{F}_{u, n-i}|.$$
We proceed by induction on $u$. 

Base case: when $u = 1$, the statement $P(1)$ states that for all $n\ge 2$, we have
$|\mathcal{F}_{1,n}| = |\mathcal{F}_{1, n-1}|$, which is true because 
$\mathcal{F}_{1,n} = \{1\}$  for every $n\in \mathbb{N}$.

Inductive hypothesis: assume that $P(u)$ is true for some $u\ge 1$, i.e., 
    $$|\mathcal{F}_{u,n}| - \sum_{i=1}^u (-1)^{i-1}\binom{u}{i}|\mathcal{F}_{u, n-i}|\ =\ 0,\mbox{ for all }n\ge 2u.$$
    For each $n\ge 2u$, by Lemma \ref{l100}, we have
    $$|\mathcal{F}_{u+1,n+2}|-|\mathcal{F}_{u+1,n+1}| - \sum_{i=1}^u (-1)^{i-1}\binom{u}{i}(|\mathcal{F}_{u+1, n-i+2}|-|\mathcal{F}_{u+1, n-i+1}|)\ =\ 0.$$
    Hence,
    \begin{align*}
        |\mathcal{F}_{u+1,n+2}|&\ =\ |\mathcal{F}_{u+1,n+1}| + \sum_{i=1}^u (-1)^{i-1}\binom{u}{i}(|\mathcal{F}_{u+1, n-i+2}|-|\mathcal{F}_{u+1, n-i+1}|)\\
        &\ =\ \left(|\mathcal{F}_{u+1, n+1}| + \binom{u}{1}|\mathcal{F}_{u+1, n+1}|\right)+\sum_{i=2}^u (-1)^{i-1}\binom{u}{i}|\mathcal{F}_{u+1, n-i+2}|\\
        &\ - \ \sum_{i=1}^{u-1} (-1)^{i-1}\binom{u}{i}|\mathcal{F}_{u+1, n-i+1}| - (-1)^{u-1}|\mathcal{F}_{u+1, n-u+1}|\\
&\ =\     \binom{u+1}{1}|\mathcal{F}_{u+1, n+1}|  +\sum_{i=2}^u (-1)^{i-1}\binom{u}{i}|\mathcal{F}_{u+1, n-i+2}|\\
        &\ + \ \sum_{i=1}^{u-1} (-1)^{i}\binom{u}{i}|\mathcal{F}_{u+1, n-i+1}| + (-1)^{u}|\mathcal{F}_{u+1, n-u+1}|\\
        &\ =\ \binom{u+1}{1}|\mathcal{F}_{u+1, n+1}|+\sum_{i=2}^u (-1)^{i-1}\binom{u}{i}|\mathcal{F}_{u+1, n-i+2}|\\
        &\ + \ \sum_{i=2}^{u} (-1)^{i-1}\binom{u}{i-1}|\mathcal{F}_{u+1, n-i+2}| + (-1)^{u}|\mathcal{F}_{u+1, n-u+1}|\\
        &\ =\  \binom{u+1}{1}|\mathcal{F}_{u+1, n-1}|+\sum_{i=2}^u (-1)^{i-1}\binom{u+1}{i}|\mathcal{F}_{u+1, n-i+2}|\\
        &\ +\ (-1)^{u}\binom{u+1}{u+1}|\mathcal{F}_{u+1, n-(u+1)+2}|\\
        &\ =\ \sum_{i=1}^{u+1}(-1)^{i-1}\binom{u+1}{i}|\mathcal{F}_{u+1, n-i+2}|.
    \end{align*}
    Therefore, we have shown that for $n\ge 2(u+1)$, 
    $$|\mathcal{F}_{u+1, n}|\ =\ \sum_{i=1}^{u+1}(-1)^{i-1}\binom{u+1}{i}|\mathcal{F}_{u+1, n-i}|.$$
    By mathematical induction, the statement $P(u)$ holds for all $u\ge 1$. 
\end{proof}

%%%%%%%%%%%%%%%%%%%%%%%%%%%%%%%%%%%%%%%%%%%%%%%%%%%%%%%%%%%%%%%%%%%%%%%%%%%%%%%%
%%%%%%%%%%%%%%%%%%%%%%%%%%%%%%%%%%%%%%%%%%%%%%%%%%%%%%%%%%%%%%%%%%%%%%%%%%%%%%%%%
%%%%%%%%%%%%%%%%%%%%%%%%%%%%%%%%%%%%%%%%%%%%%%%%%%%%%%%%%%%%%%%%%%%%%%%%%%%%%%%%%
%%%%%%%%%%%%%%%%%%%%%%%%%%%%%%%%%%%%%%%%%%%%%%%%%%%%%%%%%%%%%%%%%%%%%%%%%%%%%%%%%

\section{Schreier sets that avoid all integers $a$ modulo $b$}\label{sec-amodb}

In this section, we see how the polynomial method described in Section \ref{sec-intro} helps establish complicated linear recurrences whose orders increase as a parameter increases. We also practice the ``divide and conquer" technique by dividing a technical identity into pieces and proving each piece separately. We have used this technique to prove \eqref{e52} by verifying its constituents \eqref{e45}.

We start by finding a formula for $|\mathcal{D}^{(a,b)}_n|$. Each set $F\in \mathcal{D}^{(a,b)}_n$ of size $k$ is uniquely determined by $k-1$ integers from the set
$\{g^{(a,b)}(j_k-1),\ldots, g^{(a,b)}(n-1)\}$, where $j_k$ is the smallest positive integer with  
$g^{(a,b)}(j_k-1)\ge k$. Solving 
$$\left\lfloor\frac{b(j_k-1)-a}{b-1}\right\rfloor + 1\ \ge\ k$$ for $j_k$, we obtain
$$\frac{b(j_k-1)-a}{b-1}\ \ge\ k-1, \mbox{ so }j_k\ =\ k + \left\lceil \frac{a-k+1}{b} \right\rceil.$$
Hence, the number of sets in $\mathcal{D}^{(a,b)}_n$ of size $k$ is
$$\binom{(n-1)-(j_k-1)+1}{k-1}\ =\ \binom{n-j_k+1}{k-1}\ =\ \binom{n-k - \left\lceil \frac{a-k+1}{b}\right\rceil+1}{k-1},$$
which is positive if and only if 
$$n-k - \left\lceil \frac{a-k+1}{b}\right\rceil+1\ \ge\ k-1\mbox{; that is, }k\ \le\ \frac{nb-a}{2b-1}+1.$$
Therefore, 
\begin{equation}\label{e26}
    |\mathcal{D}^{(a,b)}_{n}|\ =\ \sum_{k=1}^{\left\lfloor\frac{nb-a}{2b-1}\right\rfloor+1}\binom{n-k - \left\lceil \frac{a-k+1}{b}\right\rceil+1}{k-1}\ \ =\ \sum_{k=0}^{\left\lfloor\frac{nb-a}{2b-1}\right\rfloor}\binom{n-k + \left\lfloor \frac{k-a}{b}\right\rfloor}{k}.
\end{equation}

For each $b\ge 2$, define the sequence $(d_{b, n})_{n=1}^\infty$ as follows:
$$d_{b,1}\ =\ d_{b, 2}\ =\ \cdots\ =\ d_{b, 2b-1}\ =\ 1$$
and 
$$d_{b, n}\ =\ d_{b, n-b}\  + d_{b, n-2b+1}, \mbox{ for }n\ge 2b.$$
By \cite[Proposition 2]{CGKMTV}, we have the following formula to compute $d_{b,n}$:
\begin{equation}\label{e25}d_{b, n}\ =\ \sum_{i=0}^{\left\lfloor \frac{n-1}{b}\right\rfloor}\binom{\left\lfloor \frac{n+(b-1)i-b}{2b-1}\right\rfloor}{i}, \mbox{ for all }b\ge 2\mbox{ and }n\ge 1.\end{equation}

\begin{prop}\label{kp}
For $a,b\in\mathbb{N}$ with $b\ge 2$ and $1\le a\le b-1$, we have
\begin{equation}\label{e24}|\mathcal{D}^{(a,b)}_n|\ =\ d_{b, nb+1-a}, \mbox{ for all }n\in\mathbb{N}.\end{equation}
\end{prop}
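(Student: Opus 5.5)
The plan is to prove \eqref{e24} by a Bird-style bijective recurrence together with strong induction on $n$, and not by comparing \eqref{e26} with \eqref{e25} term by term. A quick check ($b=2$, $a=1$, $n=3$, $k=1$) shows that the summands of the two formulas do not match under $i=k$, so a direct identity would need a nontrivial reindexing.

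The recurrence for $d_b$ gives, with $N=nb+1-a$,
$$d_{b,N}\ =\ d_{b,N-b}+d_{b,N-2b+1},\qquad N-b=(n-1)b+1-a,\qquad N-2b+1=(n-2)b+1-(a-1).$$
This suggests proving a slightly stronger statement simultaneously for all residues $a\in\{0,1,\ldots,b-1\}$:
$$|\mathcal{D}^{(a,b)}_n|\ =\ d_{b,\,nb+1-\tilde a},\qquad \tilde a=a \mbox{ if } a\ge 1,\quad \tilde a=b \mbox{ if } a=0.$$
The case $a=0$ is consistent with Table \ref{Dn}: for example $|\mathcal{D}^{(0,2)}_n|=1,1,2,4,7,\ldots=d_{2,1},d_{2,3},d_{2,5},\ldots$. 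It is also what \cite[Theorem 3]{CGKMTV} should give, but it falls out of the same induction anyway.

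The key combinatorial step, for $n\ge 3$ and any $a$, is to partition $\mathcal{D}^{(a,b)}_n$ according to whether $g^{(a,b)}(n-1)\in F$.
\begin{itemize}
\item If $g^{(a,b)}(n-1)\notin F$, then $F\mapsto (F\setminus\{g^{(a,b)}(n)\})\cup\{g^{(a,b)}(n-1)\}$ is a bijection onto $\mathcal{D}^{(a,b)}_{n-1}$. This is exactly as in the map $f$ in Bird's proof: the minimum can only drop when $|F|=1$, and then it is still fine.
\item If $g^{(a,b)}(n-1)\in F$, then $F\mapsto (F\setminus\{g^{(a,b)}(n)\})-1$. Since $|F|\ge 2$, we have $\min F\ge 2$. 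Shifting down by $1$ sends positive integers $\ge 2$ that are not $a$ mod $b$ bijectively onto positive integers that are not $a-1$ mod $b$, with $a-1$ read mod $b$. Because $1$ is itself not $a$ mod $b$ when $a\neq 1$, and the case $a=1$ is handled by the same count, this gives $g^{(a,b)}(m)-1=g^{(a-1,b)}(m-1)$. The image therefore lies in $\mathcal{D}^{(a-1 \bmod b,\,b)}_{n-2}$, and the Schreier inequality survives because $\min$ and $|\cdot|$ both drop by $1$. The inverse adds $1$ and appends $g^{(a,b)}(n)$.
\end{itemize}
Hence
$$|\mathcal{D}^{(a,b)}_n|\ =\ |\mathcal{D}^{(a,b)}_{n-1}|+|\mathcal{D}^{(a-1,b)}_{n-2}|,$$
with $a-1$ taken mod $b$. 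Plugging in the inductive hypothesis, the index bookkeeping matches the $d_b$-recurrence:
\begin{itemize}
\item for $a\ge 2$, $(n-2)b+1-(a-1)=N-2b+1$;
\item for $a=1$, the residue becomes $0$, i.e.\ $\tilde a=b$, and $(n-2)b+1-b+\ldots$ gives $(n-1)b+1-b=N-2b+1$ again;
\item for $a=0$, the residue becomes $b-1$, and the same check goes through.
\end{itemize}
The recurrence $d_{b,N}=d_{b,N-b}+d_{b,N-2b+1}$ is valid here because $N\ge 2b$ once $n\ge 3$.

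The base cases are $n=1,2$, checked directly. For $n=1$ the only set is $\{g(1)\}$, and the index $b+1-\tilde a\le b$ gives the value $1$. For $n=2$ and $a\ge 2$ or $a=0$, we have $G_2=\{1,2\}$, only $\{2\}$ works, and the index is $\le 2b-1$, so the value is $1$. For $a=1$, $G_2=\{2,3\}$, giving $\{3\}$ and $\{2,3\}$, while $d_{b,2b}=d_{b,b}+d_{b,1}=2$.

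I expect the main obstacle to be the second bijection, specifically verifying the shift identity $g^{(a,b)}(m)-1=g^{(a-1,b)}(m-1)$ uniformly in $a$, including the wrap-around $a=1\to 0$ and $a=0\to b-1$. I would verify it from \eqref{e32} by writing $m=(b-1)j+r$ as in the introduction. I also need to make sure the residue conventions line up with the indices $\tilde a$ in all three cases listed above.
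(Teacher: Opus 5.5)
Your route works, but the second bijection is wrong as stated for $a=1$, so the proof does not close in that case. The identity $g^{(a,b)}(m)-1=g^{(a-1,b)}(m-1)$ needs $g^{(a,b)}(1)=1$, i.e.\ $1\in G^{(a,b)}$, so that shifting down by $1$ discards exactly the first element. For $a=1$ the integer $1$ is excluded. The map $x\mapsto x-1$ then sends $\{x\ge 1: x\not\equiv 1\pmod{b}\}$ onto $\{y\ge 1: y\not\equiv 0\pmod{b}\}$ with nothing lost, hence $g^{(1,b)}(m)-1=g^{(0,b)}(m)$ (compare the two cases of \eqref{e32}).

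So the sets in $\mathcal{D}^{(1,b)}_n$ containing $g^{(1,b)}(n-1)$ correspond to $\mathcal{D}^{(0,b)}_{n-1}$, not to $\mathcal{D}^{(0,b)}_{n-2}$, and your recurrence is false for $a=1$. For $b=2$ it predicts $|\mathcal{D}^{(1,2)}_4|=3+|\mathcal{D}^{(0,2)}_2|=4$. But the Schreier subsets of $\{2,4,6,8\}$ containing $8$ are $\{8\},\{2,8\},\{4,8\},\{6,8\},\{4,6,8\}$, five of them. The last two map to $\{5\},\{3,5\}$, which is $\mathcal{D}^{(0,2)}_3$. Your index check hides this: your recurrence would produce $d_{b,(n-2)b+1-b}=d_{b,(n-3)b+1}\neq d_{b,N-2b+1}$. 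The expression $(n-1)b+1-b$ you wrote is the index of $\mathcal{D}^{(0,b)}_{n-1}$, not of $\mathcal{D}^{(0,b)}_{n-2}$.

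The repair is short. For $n\ge 2$,
\[
|\mathcal{D}^{(1,b)}_n|\ =\ |\mathcal{D}^{(1,b)}_{n-1}|+|\mathcal{D}^{(0,b)}_{n-1}|.
\]
With $N=nb\ge 2b$ we get $N-b=(n-1)b$ and $N-2b+1=(n-1)b+1-b$. These are exactly the indices attached to $\mathcal{D}^{(1,b)}_{n-1}$ and (with $\tilde a=b$) to $\mathcal{D}^{(0,b)}_{n-1}$. Since level $n-1$ precedes level $n$, your simultaneous strong induction over all residues still goes through. Your cases $a\ge 2$ and $a=0$ are correct as written. There are also small slips in the base cases: $G^{(a,b)}_2=\{1,3\}$ when $a=2$ or $(a,b)=(0,2)$, and $G^{(1,2)}_2=\{2,4\}$. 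The counts $1$ and $2$ are unaffected.

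Once fixed, this is a genuinely different proof from the paper's. The paper never finds a recurrence for $|\mathcal{D}^{(a,b)}_n|$ itself. It writes both sides of \eqref{e24} as binomial sums via \eqref{e26} and the formula \eqref{e25} from \cite{CGKMTV}, reducing to the identity \eqref{e23}. It then proves \eqref{e23} by matching ``posts'' $f(\chi(\ell))=g(\xi(\ell))$ and comparing the blocks between them (Lemmas \ref{ki1}--\ref{ki3}), with several floor-function case analyses.

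Your Bird-style split replaces all of this with a two-term recurrence whose index shifts mirror $d_{b,N}=d_{b,N-b}+d_{b,N-2b+1}$. It therefore explains combinatorially why $(|\mathcal{D}^{(a,b)}_n|)_n$ is a $b$-periodic subsequence of $(d_{b,n})_n$, and it needs neither \eqref{e25} nor \eqref{e26}. The price is that the residues are coupled ($a\to a-1$, $1\to 0$, $0\to b-1$), so you must carry $a=0$ along. As a by-product you also get $|\mathcal{D}^{(0,b)}_n|=d_{b,(n-1)b+1}$.
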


To prove \eqref{e24}, we use \eqref{e25} to write
    \begin{align*}
        d_{b, nb+1-a}&\ =\ \sum_{i=0}^{\left\lfloor \frac{(nb+1-a)-1}{b}\right\rfloor}\binom{\left\lfloor \frac{(nb+1-a)+(b-1)i-b}{2b-1}\right\rfloor}{i}\\
        &\ =\ \sum_{i=0}^{n-1}\binom{\left\lfloor \frac{b(n+1) + (b-1)i-a}{2b-1}\right\rfloor-1}{i}\\
        &\ =\ \sum_{i=0}^{n}\binom{\left\lfloor \frac{b(n+1) + (b-1)i-a}{2b-1}\right\rfloor-1}{i}.
    \end{align*}
    Meanwhile, \eqref{e26} gives
    \begin{align*}
        |\mathcal{D}^{(a,b)}_n|&\ =\ \sum_{k=0}^{\left\lfloor\frac{nb-a}{2b-1}\right\rfloor}\binom{n-k + \left\lfloor \frac{k-a}{b}\right\rfloor}{k}\\
        &\ =\ \sum_{k=0}^{\left\lfloor\frac{nb-a}{2b-1}\right\rfloor}\binom{ \left\lfloor \frac{b(n+1)-(b-1)k-a}{b}\right\rfloor-1}{k}.
    \end{align*}
    Hence, \eqref{e24} is the same as
    $$\sum_{k=0}^{\left\lfloor\frac{nb-a}{2b-1}\right\rfloor}\binom{ \left\lfloor \frac{b(n+1)-(b-1)k-a}{b}\right\rfloor-1}{k}\ =\ \sum_{i=0}^{n}\binom{\left\lfloor \frac{b(n+1) + (b-1)i-a}{2b-1}\right\rfloor-1}{i}, \mbox{ for all }n\ge 1.$$
    Substituting $m = n+1$ gives
    \begin{equation}\label{e23}\sum_{k=0}^{\left\lfloor\frac{(m-1)b-a}{2b-1}\right\rfloor}\underbrace{\binom{ \left\lfloor \frac{bm-(b-1)k-a}{b}\right\rfloor-1}{k}}_{f(k)}\ =\ \sum_{i=0}^{m-1}\underbrace{\binom{\left\lfloor \frac{bm + (b-1)i-a}{2b-1}\right\rfloor-1}{i}}_{g(i)},\mbox{ for all }m\ge 2,\end{equation}
    which generalizes \cite[(6)]{CGKMTV} with the appearance of $a\ge 1$. 

    To see why \eqref{e23} holds for specific cases, which may suggest a proof idea, we collect the terms on both sides of \eqref{e23} when $(m, a, b) = (8, 3, 5)$ and $(11, 2, 7)$:
    \begin{itemize}
        \item Plugging $(m, a, b) = (8, 3, 5)$ into \eqref{e23} gives
        \begin{equation}\label{e80}\underbrace{\binom{6}{0} + \binom{5}{1}}_{ = 6} + \binom{4}{2} + \underbrace{\binom{4}{3}}_{=4}\ =\ \underbrace{\binom{3}{0} + \binom{3}{1}}_{=4} + \binom{4}{2} + \underbrace{\binom{4}{3} + \binom{4}{4} + \binom{5}{5}}_{=6},\end{equation}
        and
        \item Plugging $(m,a,b) = (11, 1, 3)$ into \eqref{e23} gives
        \begin{align}\label{e81}
            \binom{9}{0} &+ \underbrace{\binom{9}{1} + \binom{8}{2}}_{=37} + \binom{7}{3}+ \underbrace{\binom{7}{4} + \binom{6}{5}}_{=41}\nonumber\\
            &\ =\ \underbrace{\binom{5}{0} + \binom{5}{1} + \binom{6}{2} + \binom{6}{3}}_{=41} + \binom{7}{4} + \underbrace{\binom{7}{5} + \binom{7}{6} + \binom{8}{7} + \binom{8}{8}}_{= 37}+ \binom{9}{9}.
        \end{align}
    \end{itemize}
    Equations \eqref{e80} and \eqref{e81} suggest that \eqref{e23} holds because certain terms  on both sides, called posts, are equal, and the remaining terms between every two posts sum up to the same number.
    
    For $0\le i\le m-1$, we consider $i$ of the form
    $$i \ =\ m-(2b-1)(\ell+1)-2a\ =:\ \xi(\ell).$$
    Due to $0\le i\le m-1$, we require
    $$-1\ \le\ \ell\ \le\ \underbrace{\left\lfloor\frac{m-2a}{2b-1}\right\rfloor-1}_{=:\ell_0}.$$
    For $\ell$ in the above range, 
    $$0\ \le\ \chi(\ell)\ :=\ b(\ell+1)+a-1\ \le\ \frac{mb-a}{2b-1}-1\ \le\ \left\lfloor\frac{(m-1)b-a}{2b-1}\right\rfloor.$$
    We have 
    \begin{align}\label{e28}
        g(\xi(\ell))&\ =\ \binom{\left\lfloor\frac{bm+(b-1)(m-(2b-1)(\ell+1)-2a)-a}{2b-1}\right\rfloor-1}{m-(2b-1)(\ell+1)-2a}\nonumber\\
        &\ =\ \binom{m-(b-1)(\ell+1)-a-1}{m-(2b-1)(\ell+1)-2a}\nonumber\\
        &\ =\ \binom{m-(b-1)(\ell+1)-a-1}{b(\ell+1)+a-1}\nonumber\\
        &\ =\ \binom{ \left\lfloor \frac{bm-(b-1)(b(\ell+1)+a-1)-a}{b}\right\rfloor-1}{b(\ell+1)+a-1}\ =\ f(\xi(\ell)).
    \end{align}
    Write 
    \begin{align}\label{e29}
        \sum_{k=0}^{\left\lfloor\frac{(m-1)b-a}{2b-1}\right\rfloor}f(k)&\ =\ \sum_{k=0}^{\chi(-1)-1} f(k) + f(\chi(-1)) + \sum_{k=\chi(-1)+1}^{\chi(0)-1} f(k) + f(\chi(0)) + \cdots\nonumber\\
        &\ +\ \sum_{k=\chi(\ell_0-1)+1}^{\chi(\ell_0)-1} f(k) + f(\chi(\ell_0)) + \sum_{k = \chi(\ell_0)+1}^{\left\lfloor\frac{(m-1)b-a}{2b-1}\right\rfloor}f(k)
    \end{align}
    and 
    \begin{align}\label{e30}
        \sum_{i = 0}^{m-1}g(i)&\ =\ 
        \sum_{i=0}^{\xi(\ell_0)-1}g(i) + g(\xi(\ell_0)) + \sum_{i=\xi(\ell_0)+1}^{\xi(\ell_0-1)-1}g(i) + g(\xi(\ell_0-1)) + \cdots\\
        &\ +\ \sum_{i = \xi(0)+1}^{\xi(-1)-1}g(i) + g(\xi(-1)) + \sum_{i = \xi(-1)+1}^{m-1}g(i).
    \end{align}

Thanks to \eqref{e28}, \eqref{e29}, and \eqref{e30}, to prove \eqref{e23} and thus, Proposition \ref{kp}, it suffices to prove the following three lemmas.

\begin{lem}\label{ki1}
    For $-1\le \ell\le \ell_0-1$, we have
    \begin{equation}\label{e10}
\sum_{k=\chi(\ell)+1}^{\chi(\ell+1)-1}f(k)\ =\ \sum_{i=\xi(\ell+1)+1}^{\xi(\ell)-1}g(i).
\end{equation}
\end{lem}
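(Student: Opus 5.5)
The plan is to compute both sides of \eqref{e10} explicitly by removing the floor functions. Then I will collapse the right side with Pascal's rule and match it term by term with the left side using the symmetry $\binom{n}{k}=\binom{n}{n-k}$. The left side has $\chi(\ell+1)-\chi(\ell)-1=b-1$ terms. The right side has $\xi(\ell)-\xi(\ell+1)-1=2b-2$ terms. This suggests pairing consecutive $g$-terms, exactly as in the grouped examples \eqref{e80} and \eqref{e81}.

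\emph{Step 1 (left side).} Write $k=\chi(\ell)+j$ with $1\le j\le b-1$, and set $K_0:=\chi(\ell)=b(\ell+1)+a-1$. Since
$$\left\lfloor \tfrac{bm-(b-1)k-a}{b}\right\rfloor = m-k+\left\lfloor\tfrac{k-a}{b}\right\rfloor$$
and $k-a=b(\ell+1)+j-1$ with $0\le j-1\le b-2$, we get $\lfloor (k-a)/b\rfloor=\ell+1$. With $N:=m-(b-1)(\ell+1)-a$, this gives
$$f(\chi(\ell)+j)=\binom{N-j}{K_0+j},\qquad 1\le j\le b-1.$$

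\emph{Step 2 (right side).} Write $i=\xi(\ell+1)+t$ with $1\le t\le 2b-2$. Substituting $\xi(\ell+1)=m-(2b-1)(\ell+2)-2a$, the numerator $bm+(b-1)i-a$ becomes
$$(2b-1)\bigl(m-(b-1)(\ell+2)-a\bigr)+(b-1)t.$$
So, with $M:=m-(b-1)(\ell+2)-a=N-(b-1)$, the top entry of $g(i)$ is $M-1+\lfloor (b-1)t/(2b-1)\rfloor$. Since
$$0<\tfrac t2-\tfrac{(b-1)t}{2b-1}=\tfrac{t}{2(2b-1)}<\tfrac12 \quad\text{for } t\le 2b-2,$$
we get $\lfloor (b-1)t/(2b-1)\rfloor=\lceil t/2\rceil-1$. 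Hence $g(\xi(\ell+1)+t)=\binom{M+\lceil t/2\rceil-2}{\xi(\ell+1)+t}$. This is the step where care is needed, since the exact floor identity depends on the bound $t\le 2b-2$, i.e.\ on staying strictly between the posts.

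\emph{Step 3 (collapse and match).} Pair $t=2s-1$ and $t=2s$ for $1\le s\le b-1$. The two terms share the top entry $M+s-2$, so Pascal's rule gives
$$g(\xi(\ell+1)+2s-1)+g(\xi(\ell+1)+2s)=\binom{M+s-1}{\xi(\ell+1)+2s}.$$
Note that $M-\xi(\ell+1)=b(\ell+2)+a$. Applying symmetry turns this into
$$\binom{M+s-1}{b(\ell+2)+a-1-s}.$$
Setting $j=b-s$, we have $N-j=M+s-1$ and $K_0+j=b(\ell+2)+a-1-s$. So this is exactly the $j=b-s$ term $f(\chi(\ell)+j)$ from Step 1. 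Summing over $s$ gives \eqref{e10}.

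The main obstacle is justifying the symmetry step. It requires the top entry $M+s-1$ to be nonnegative; otherwise one must check that both sides vanish under the convention $\binom{n}{k}=0$ for $k<0$ or $k>n$. This should follow from the range $-1\le\ell\le\ell_0-1$, since $\ell\le\ell_0-1$ gives $\xi(\ell+1)\ge 0$, hence $M-1\ge b(\ell+2)+a-1\ge 0$.
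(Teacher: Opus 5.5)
Your proof is correct and follows essentially the same route as the paper: count the terms ($b-1$ versus $2b-2$), pair consecutive $g$-terms by Pascal's rule, evaluate $f$ explicitly, and match the two sums via $\binom{n}{k}=\binom{n}{n-k}$. The paper indexes the left side downward from $\chi(\ell+1)$, which is your substitution $j=b-s$. Your explicit check that $\lfloor (b-1)t/(2b-1)\rfloor=\lceil t/2\rceil-1$ for $1\le t\le 2b-2$, and that the top entries are nonnegative because $\xi(\ell+1)\ge 0$, supplies details the paper leaves implicit.
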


\begin{lem}\label{ki2} For $m, b\ge 2$ and $1\le a\le b-1$, 
\begin{equation}\label{e11}\sum_{k=0}^{\min\left\{\chi(-1)-1, \left\lfloor\frac{(m-1)b-a}{2b-1}\right\rfloor\right\}} f(k)\ =\ \sum_{i = \max\{\xi(-1)+1,0\}}^{m-1}g(i).\end{equation}    
\end{lem}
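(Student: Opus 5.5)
The plan is to make both sides of \eqref{e11} explicit, replacing each floor by an exact integer on the relevant index range, and then to match the two finite sums by Pascal's rule. Recall $\chi(-1)=a-1$ and $\xi(-1)=m-2a+1$. So the left side runs over $0\le k\le a-2$ (truncated by $\lfloor((m-1)b-a)/(2b-1)\rfloor$), and the right side runs over the last $2a-2$ indices $m-2a+2\le i\le m-1$ (truncated at $0$). I will first treat the generic case $m\ge 2a$, where neither truncation is active, and handle the truncations at the end.

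\emph{Left side.} Write $f(k)=\binom{m-k-1+\lfloor (k-a)/b\rfloor}{k}$, as in \eqref{e26}. For $0\le k\le a-2$ we have $-a\le k-a\le -2$, and $a\le b-1$, so $\lfloor (k-a)/b\rfloor=-1$. Hence
\[
f(k)=\binom{m-k-2}{k}, \qquad 0\le k\le a-2 .
\]

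\emph{Right side.} Substitute $i=m-j$ with $2\le j\le 2a-1$. A short computation gives
\[
g(m-j)=\binom{m-1-c_j}{m-j}=\binom{m-1-c_j}{j-1-c_j},
\qquad c_j:=\left\lceil \frac{(b-1)j+a}{2b-1}\right\rceil .
\]
I then split $j$ by parity.
\begin{itemize}
\item For $j=2k+1$, write $(b-1)(2k+1)+a=(2b-1)k+(b-1-k+a)$. For $1\le k\le a-2$ the remainder $b-1-k+a$ lies in $[1,2b-2]$, so $c_j=k+1$ and $g=\binom{m-k-2}{k-1}$.
\item For $j=2k+2$, write $(b-1)(2k+2)+a=(2b-1)(k+1)+(a-k-1)$. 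For $0\le k\le a-2$ the remainder $a-k-1$ lies in $[1,2b-2]$, so $c_j=k+2$ and $g=\binom{m-k-3}{k-1}$.
\end{itemize}
This expresses the right side as $\sum_{k}\bigl[\binom{m-k-2}{k-1}+\binom{m-k-3}{k-1}\bigr]$ with explicit ranges. The $k=0$ even term is $\binom{m-3}{-1}=0$, and the largest odd index $j=2a-1$ has to be checked separately.

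\emph{Matching.} It remains to show
\[
\sum_{k=0}^{a-2}\binom{m-k-2}{k}=\sum_{k}\left[\binom{m-k-2}{k-1}+\binom{m-k-3}{k-1}\right].
\]
I would do this either by induction on $a$ or by telescoping with Pascal's rule $\binom{N}{k}=\binom{N-1}{k}+\binom{N-1}{k-1}$. Increasing $a$ by $1$ adds one term on the left and two terms on the right, so a direct check of the increment should close the induction. This matches the "posts and blocks" pattern seen in \eqref{e80}.

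\emph{Expected obstacle.} The main difficulty I expect is the index bookkeeping. The boundary values of $c_j$ (the first and last few $j$) must be verified exactly, since a single off-by-one changes the identity. Then there is the small-$m$ regime $m<2a$, where the $\min$ and $\max$ in \eqref{e11} truncate the sums. My approach there is to show that every dropped term has bottom index exceeding top index, or a negative bottom index, so it vanishes. That reduces the truncated statement to the generic identity above. If that reduction fails, the fallback is to check the finitely many patterns with $m\le 2a-1\le 2b-3$ directly.
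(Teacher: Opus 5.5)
For $m\ge 2a-1$ your computation is correct and is essentially the paper's Case~2. Both evaluate every floor exactly and then use Pascal's rule and symmetry; the paper pairs $g(m-2a-1+2s)$ with $g(m-2a+2s)$, $1\le s\le a-1$, and applies Pascal before symmetry. There are some slips in this part.
\begin{itemize}
\item $\xi(-1)=m-2a$, not $m-2a+1$, so the right side runs over $m-2a+1\le i\le m-1$. Your range $2\le j\le 2a-1$ is the right one only because the $j=1$ term $g(m-1)=\binom{m-2}{m-1}$ vanishes, and you should say so.
\item Your stated range $m-2a+2\le i\le m-1$ would drop the generally nonzero term $g(m-2a+1)=\binom{m-a-1}{a-2}$.
\item The index $j=2a-1$ needs no separate check, since $c_{2a-1}=a$ fits your odd formula with $k=a-1$.
\item The final matching is termwise Pascal after shifting the odd sum by one, so no induction is needed.
\end{itemize}

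The genuine gap is the regime $m\le 2a-2$, which is the paper's Case~1. Padding with zeros does not reduce it to your generic computation, because you justified that computation only for $m\ge 2a$ and it breaks for small $m$. You use $\binom{N}{K}=\binom{N}{N-K}$ with $N=m-1-c_j$, and $N=-1$ at $(m,j)=(2,2),(2,3),(3,4)$.

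This is not a technicality. For $m=2$ and $a\ge 2$ the lemma reads $1=f(0)=g(0)+g(1)=\binom{-1}{0}+\binom{0}{1}$. So it holds only under the convention $\binom{-1}{0}=1$, which the paper uses tacitly (e.g.\ $\binom{p-2}{0}$ with $p=1$ in \eqref{e17}). Under that convention your rewriting sends $g(0)=1$ to $\binom{-1}{-1}=0$ (you even assert that the $j=2$ term vanishes), and it sends $g(-1)=0$ to $\binom{-1}{0}=1$. So your explicit formulas are not the actual terms there.

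Two further pieces are missing. Discarding the left-hand terms with $k>\lfloor((m-1)b-a)/(2b-1)\rfloor$ requires $\lfloor((m-1)b-a)/(2b-1)\rfloor\ge\lfloor(m-2)/2\rfloor$, which you never check. Your fallback of checking finitely many patterns is not available, since $a$ and $b$ are unbounded.

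The paper closes Case~1 with the floor identity $\lfloor(bt-a)/(2b-1)\rfloor=\lfloor(t-1)/2\rfloor$ for $1\le t\le m\le 2a-2$ (Lemma~\ref{GPTlem}). This identity turns both sides into $\sum_{k\ge 0}\binom{m-k-2}{k}$.

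Alternatively, your padding idea can be repaired, and it would then give a single argument for all $m\ge 2$:
\begin{itemize}
\item Prove the floor estimate above; it is a direct check for $m$ even and odd.
\item On the right, use the paper's pairing and apply Pascal \emph{before} symmetry.
\item Adopt the convention $\binom{N}{K}=0$ for $K<0$, $\binom{N}{0}=1$, and $\binom{N}{K}=0$ for $1\le K$ and $N<K$.
\end{itemize}
Each Pascal step then has $N-K=a-s-1\ge 0$, and every step remains valid.
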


\begin{lem}\label{ki3} For $m, b\ge 2$ and $1\le a\le b-1$,
\begin{equation}\label{e12}\sum_{k = \chi(\ell_0)+1}^{\left\lfloor\frac{(m-1)b-a}{2b-1}\right\rfloor}f(k)\ =\ \sum_{i=0}^{\xi(\ell_0)-1}g(i).\end{equation}
\end{lem}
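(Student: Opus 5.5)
The plan is to make both sides of \eqref{e12} completely explicit in terms of the remainder of $m-2a$ modulo $2b-1$. Then \eqref{e12} becomes a finite binomial identity, which I would prove with Pascal's rule. Write
$$m-2a\ =\ (2b-1)L+r,\qquad L\ :=\ \ell_0+1,\quad 0\le r\le 2b-2,$$
so that $\xi(\ell_0)=r$ and $\chi(\ell_0)=bL+a-1$. The right side of \eqref{e12} is then $\sum_{i=0}^{r-1}g(i)$.

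For the left side I first compute the upper limit. Substituting $m=(2b-1)L+r+2a$ and using $2ab-a-b=a(2b-1)-b$ gives
$$\left\lfloor\frac{(m-1)b-a}{2b-1}\right\rfloor\ =\ \chi(\ell_0)+1+\left\lfloor\frac{b(r-1)}{2b-1}\right\rfloor.$$
So the left side has $J:=1+\lfloor b(r-1)/(2b-1)\rfloor$ terms, and $J=0$ when $r=0$, in which case both sides are empty. Put $k=\chi(\ell_0)+j$ with $1\le j\le J$. Since $J\le b$, we get $\lfloor (k-a)/b\rfloor=L$, and the form $f(k)=\binom{m-1-k+\lfloor (k-a)/b\rfloor}{k}$ from \eqref{e26} gives
$$f(\chi(\ell_0)+j)\ =\ \binom{bL+a+r-j}{bL+a-1+j}\ =\ \binom{N+r+1-j}{r+1-2j},\qquad N\ :=\ bL+a-1.$$
On the right, $bm-a=(2b-1)(bL+a)+br$, hence
$$g(i)\ =\ \binom{N+t_i}{i},\qquad t_i\ :=\ \left\lfloor\frac{br+(b-1)i}{2b-1}\right\rfloor .$$
Thus \eqref{e12} reduces to the identity
\begin{equation*}
\sum_{i=0}^{r-1}\binom{N+t_i}{i}\ =\ \sum_{j=1}^{J}\binom{N+r+1-j}{r+1-2j},
\end{equation*}
which should hold for every integer $N\ge 0$ and every $0\le r\le 2b-2$.

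To prove this identity, I would use that $t_{i+1}-t_i\in\{0,1\}$, that $t_{r-1}=r-1$, and that $t_0=\lfloor br/(2b-1)\rfloor$. Read as $i$ decreases, the sequence $(t_i)$ is a staircase, and the plan is to collapse it from the top using Pascal's rule $\binom{x}{i}+\binom{x}{i+1}=\binom{x+1}{i+1}$. Whenever $t_i=t_{i+1}$, the two terms with indices $i$ and $i+1$ share a top entry and merge into one binomial with top increased by $1$. This is exactly the ``posts and blocks'' phenomenon visible in \eqref{e80} and \eqref{e81}. Concretely, I would induct on the number of indices where $t$ is constant; equivalently, I would prove by induction on $s$ a partial-sum formula for $\sum_{i=s}^{r-1}\binom{N+t_i}{i}$ as a short sum of the form appearing on the right. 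Each merge should produce one term $\binom{N+r+1-j}{r+1-2j}$ and shift the remaining staircase, and $J$ counts the number of merges.

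The main obstacle I expect is the floor bookkeeping in this last step. I must identify exactly where $t_{i+1}=t_i$, namely near $i\equiv -br \pmod{2b-1}$ up to boundary adjustments, and check that the merges produce precisely $J=1+\lfloor b(r-1)/(2b-1)\rfloor$ terms with the claimed bottom indices $r+1-2j$. If the direct merging argument gets too messy, my fallback is to note that, for fixed $r$, both sides are polynomials in $N$. I would then verify the identity via the generating function $\sum_i\binom{N+t_i}{i}x^i$, or by induction on $N$, where Pascal's rule applied to every term turns the identity at $N+1$ into the one at $N$ plus the same identity with $r$ lowered.
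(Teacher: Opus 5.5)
Your reduction is correct and, as far as it goes, cleaner than the paper's. With $m-2a=(2b-1)L+r$, the following all check out: the upper limit, the constancy $\lfloor (k-a)/b\rfloor=L$, the rewriting $f(\chi(\ell_0)+j)=\binom{N+r+1-j}{r+1-2j}$, and $g(i)=\binom{N+t_i}{i}$. The paper instead writes $m=(2b-1)s+t$ and must split by the parity of $t$ and by $t\le 2a-1$ versus $t\ge 2a$. In the second case its pairing of the terms $2j,2j+1$ has mismatched tops and has to be re-indexed.

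The gap is that you never prove the reduced identity, which is the entire content of the lemma. Your description of it also misreads the structure. The indices with $t_{i+1}=t_i$ are not sporadic points near $i\equiv -br \pmod{2b-1}$ needing bookkeeping. In the window $0\le i\le r-1$ they occur at every other index, namely exactly at the pairs counted down from the top. Concretely, for $0\le k\le\lfloor r/2\rfloor-1$ (so $k\le b-2$),
\[
br+(b-1)(r-1-2k)=(2b-1)(r-k)-(b-1-k),\qquad br+(b-1)(r-2-2k)=(2b-1)(r-1-k)+(k+1),
\]
with $1\le b-1-k\le b-1$ and $1\le k+1\le b-1$. Hence $t_{r-1-2k}=t_{r-2-2k}=r-1-k$, and Pascal's rule gives
\[
\binom{N+r-1-k}{r-2-2k}+\binom{N+r-1-k}{r-1-2k}\ =\ \binom{N+r+1-j}{r+1-2j},\qquad j=k+1.
\]
If $r$ is odd, the unpaired term $i=0$ is $\binom{N+t_0}{0}=1$, which is the term $j=(r+1)/2$ on the right. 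Finally, $J=\lceil r/2\rceil$: write $b(r-1)/(2b-1)=(r-1)/2+(r-1)/(2(2b-1))$ and note the last summand lies in $[0,1/2)$ for $1\le r\le 2b-2$. This closes the argument in a few lines with no case split. Without it, what you have is a correct setup, not a proof.

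Two smaller points. First, your fallback induction on $N$ does have a valid step. Differencing in $N$ turns the identity for $r$ into the identity for $r-1$ at $N+1$. When $J(r)=J(r-1)+1$, the extra term has bottom index $-1$ because $J=\lceil r/2\rceil$, so it vanishes. But the induction needs a base value of $N$ for every $r$, which you do not supply and which is no easier than the original identity.

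Second, $L=\ell_0+1=-1$ whenever $2\le m\le 2a-1$, so $N=a-1-b<0$ there. The lemma is stated for these $m$ as well, so restricting the reduced identity to $N\ge 0$ leaves them uncovered. The pairing argument still works in that range. When $L=-1$ the merged tops satisfy $N+r-1-k\ge (m-3)/2$, hence are nonnegative. The leftover term is $\binom{N+t_0}{0}=1$ under the convention $\binom{n}{0}=1$.
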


\begin{proof}[Proof of Lemma \ref{ki1}]
The left side has 
\begin{align*}
(\chi(\ell+1)-1)-(\chi(\ell)+1)+1&\ =\ \chi(\ell+1)-\chi(\ell)-1\\
&\ =\ (b(\ell+2)+a-1)-(b(\ell+1)+a-1)-1\\
&\ =\ b-1 \mbox{ terms,}
\end{align*}
while the right side has
\begin{align*}
&(\xi(\ell)-1)-(\xi(\ell+1)+1)+1\\
&\ =\ \xi(\ell)-\xi(\ell+1)-1\\
&\ =\ (m-(2b-1)(\ell+1)-2a)-(m-(2b-1)(\ell+2)-2a)-1\\
&\ =\ 2b-2\mbox{ terms}.
\end{align*}
We use Pascal's rule to combine every two consecutive terms on the right side:
\begin{align*}
    &\sum_{i=\xi(\ell+1)+1}^{\xi(\ell)-1}g(i)\\
    &\ =\ \sum_{i = 1}^{b-1}\left(g(\xi(\ell+1)+2i-1) + g(\xi(\ell+1)+2i)\right)\\ 
    &\ =\ \sum_{i=1}^{b-1} \left(\binom{\left\lfloor \frac{bm + (b-1)(\xi(\ell+1)+2i-1)-a}{2b-1}\right\rfloor-1}{\xi(\ell+1)+2i-1} + \binom{\left\lfloor \frac{bm + (b-1)(\xi(\ell+1)+2i)-a}{2b-1}\right\rfloor-1}{\xi(\ell+1)+2i}\right)\\
    &\ =\ \sum_{i=1}^{b-1}\binom{m-(b-1)(\ell+2)-a+i-2}{\xi(\ell+1)+2i-1}\\
    &\ +\ \sum_{i=1}^{b-1}\binom{m-(b-1)(\ell+2)-a+i-2}{\xi(\ell+1)+2i}\\
    &\ =\ \sum_{i=1}^{b-1}\binom{m-(b-1)(\ell+2)-a+i-1}{m-(2b-1)(\ell+2)-2a+2i}\\
    &\ =\ \sum_{i=1}^{b-1}\binom{m-(b-1)(\ell+2)-a+i-1}{b(\ell+2)+a-i-1}.
\end{align*}
Similarly, 
\begin{align*}
    \sum_{k=\chi(\ell)+1}^{\chi(\ell+1)-1}f(k)&\ =\ \sum_{k=1}^{b-1}f(\chi(\ell+1)-k)\\
    &\ =\ \sum_{k=1}^{b-1}\binom{ \left\lfloor \frac{bm-(b-1)(\chi(\ell+1)-k)-a}{b}\right\rfloor-1}{\chi(\ell+1)-k}\\
    &\ =\ \sum_{k=1}^{b-1}\binom{ \left\lfloor \frac{bm-(b-1)(b(\ell+2)+a-1-k)-a}{b}\right\rfloor-1}{b(\ell+2)+a-1-k}\\
    &\ =\ \sum_{k=1}^{b-1}\binom{m-(b-1)(\ell+2)-a+k-1}{b(\ell+2)+a-k-1}.
\end{align*}
Therefore, \eqref{e10} holds. 
\end{proof}

\begin{proof}[Proof of Lemma \ref{ki2}]
We need to prove 
\begin{equation}\label{e13}\sum_{k=0}^{\min\left(\left\lfloor\frac{(m-1)b-a}{2b-1}\right\rfloor, a-2\right)}\binom{\left\lfloor\frac{bm-(b-1)k-a}{b}\right\rfloor-1}{k}\ =\ \sum_{i = \max\{m-2a+1,0\}}^{m-1}\binom{\left\lfloor \frac{bm+(b-1)i-a}{2b-1}\right\rfloor-1}{i}.\end{equation}

\bigskip

\noindent Case 1: $m\le 2a-2$. Then 
$$\left\lfloor\frac{(m-1)b-a}{2b-1}\right\rfloor\ \le\ \left\lfloor\frac{(2a-3)b-a}{2b-1}\right\rfloor\ =\ \left\lfloor a-\frac{3b}{2b-1}\right\rfloor\ =\ a-2.$$
Hence, \eqref{e13} becomes
\begin{equation}\label{e14}\sum_{k=0}^{\left\lfloor\frac{(m-1)b-a}{2b-1}\right\rfloor}\binom{\left\lfloor\frac{bm-(b-1)k-a}{b}\right\rfloor-1}{k}\ =\ \sum_{i = 0}^{m-1}\binom{\left\lfloor \frac{bm+(b-1)i-a}{2b-1}\right\rfloor-1}{i}.\end{equation}
We have
\begin{align*}
    \sum_{k=0}^{\left\lfloor\frac{(m-1)b-a}{2b-1}\right\rfloor}\binom{\left\lfloor\frac{bm-(b-1)k-a}{b}\right\rfloor-1}{k}
    &\ =\ \sum_{k=0}^{\left\lfloor\frac{(m-1)b-a}{2b-1}\right\rfloor}\binom{ m - k + \left\lfloor\frac{k-a}{b}\right\rfloor-1}{k}\\
    &\ =\ \sum_{k=0}^{\left\lfloor\frac{(m-1)b-a}{2b-1}\right\rfloor}\binom{ m - k -2}{k}
\end{align*}
because
$$-1 \ <\ \frac{-a}{b}\ \le\ \frac{k-a}{b}\ \le\ \frac{\frac{(m-1)b-a}{2b-1}-a}{b}\ =\ \frac{m-1-2a}{2b-1}\ \le\ \frac{2a-2-1-2a}{2b-1}\ =\ \frac{-3}{2b-1},$$
and thus, 
$\left\lfloor\frac{k-a}{b}\right\rfloor = -1$. Therefore, \eqref{e14} is the same as
\begin{equation}\label{e15}\sum_{k=0}^{\left\lfloor\frac{(m-1)b-a}{2b-1}\right\rfloor}\binom{ m - k -2}{k}\ =\ \sum_{i = 0}^{m-1}\binom{\left\lfloor \frac{bm+(b-1)i-a}{2b-1}\right\rfloor-1}{i}.\end{equation}

The following proof of \eqref{e15} is written by ChatGPT, OpenAI's GPT-5.5 model \cite{GPT}, which calls for the next lemma. For a proof of the lemma, see Appendix \ref{ff_lem}.

\begin{lem}\label{GPTlem}
Given $b\ge 2$, $1\le a\le b-1$, $2\le m\le 2a-2$, and $1\le t\le m$, we have
$$\left\lfloor\frac{bt-a}{2b-1}\right\rfloor\ =\ \left\lfloor\frac{t-1}{2}\right\rfloor.$$
\end{lem}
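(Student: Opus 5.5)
The plan is a direct parity computation: write $t$ according to its parity and compare $\frac{bt-a}{2b-1}$ with $\lfloor (t-1)/2\rfloor$. The lemma claims exactly that
$$s\ \le\ \frac{bt-a}{2b-1}\ <\ s+1,\qquad s:=\left\lfloor\frac{t-1}{2}\right\rfloor,$$
which, after multiplying by $2b-1>0$, is equivalent to $0\le bt-a-s(2b-1)<2b-1$. So I will compute the integer $r:=bt-a-s(2b-1)$ in each parity class and bound it. The only hypotheses I will use are $1\le a\le b-1$ and $1\le t\le m\le 2a-2$; the latter gives $t\le 2a-2$, and that is what controls $s$.

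\emph{Case $t=2s+1$ with $s\ge 0$.} Here $r=b(2s+1)-a-s(2b-1)=b-a+s$. Since $a\le b-1$ and $s\ge 0$, we get $r\ge 1>0$. From $2s+1=t\le 2a-2$ we get $s\le a-2$ (the integer part of $a-\tfrac32$). Hence $r\le b-2<2b-1$.

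\emph{Case $t=2s+2$ with $s\ge 0$.} Here $r=b(2s+2)-a-s(2b-1)=2b-a+s$, which is clearly positive. From $2s+2\le 2a-2$ we get $s\le a-2$, so $r\le 2b-2<2b-1$.

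In both cases $0\le r<2b-1$, so $\lfloor\frac{bt-a}{2b-1}\rfloor=s=\lfloor\frac{t-1}{2}\rfloor$, as required.

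I do not expect a real obstacle. The only delicate point is the upper bound in the even case, where the inequality is tight: the margin is exactly $1$ and it relies on $t\le 2a-2$ (equivalently on $m\le 2a-2$, the Case 1 hypothesis of Lemma \ref{ki2}). I will state explicitly that this is where the hypothesis enters. The remaining constraints $b\ge 2$ and $a\ge1$ only serve to keep $2b-1>0$ and the range of $t$ nonempty.
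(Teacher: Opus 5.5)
Your proof is correct and takes the same route as the paper: split by the parity of $t$ (the paper writes $t=2s$ and $t=2s+1$, where you write $t=2s+2$ and $t=2s+1$) and trap $bt-a$ between consecutive multiples of $2b-1$ using $a\le b-1$ and $t\le 2a-2$. Your remark that the even case is the tight one is also accurate, since the identity fails at $t=2a$.
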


By Lemma \ref{GPTlem}, the left side of \eqref{e15} is equal to
\begin{equation}\label{e16}\sum_{k=0}^{\left\lfloor\frac{(m-1)b-a}{2b-1}\right\rfloor}\binom{ m - k -2}{k}\ =\ \sum_{k=0}^{\left\lfloor\frac{m-2}{2}\right\rfloor}\binom{ m - k -2}{k}\ =\ \sum_{k\ge 0}\binom{m-k-2}{k}.\end{equation}
Meanwhile, the right side of \eqref{e15} is equal to
\begin{align}\label{e17}
    &\sum_{i = 0}^{m-1}\binom{\left\lfloor \frac{bm+(b-1)i-a}{2b-1}\right\rfloor-1}{i}\nonumber\\
    &\ =\ \sum_{i=1}^m \binom{\left\lfloor \frac{bm+(b-1)(m-i)-a}{2b-1}\right\rfloor-1}{m-i}\nonumber\\
    &\ =\  \sum_{i=1}^m \binom{m-i-1+\left\lfloor \frac{bi-a}{2b-1}\right\rfloor}{m-i}\nonumber\\
    &\ =\ \sum_{i = 1}^m \binom{m-i-1+\left\lfloor \frac{i-1}{2}\right\rfloor}{m-i}\nonumber\\
    &\ = \ \sum_{i= 1}^{\lfloor m/2\rfloor}\binom{m-2i-1+\left\lfloor \frac{2i-1}{2}\right\rfloor}{m-2i} + \sum_{i = 0}^{\lfloor (m-1)/2\rfloor}\binom{m-(2i+1)-1+\left\lfloor \frac{(2i+1)-1}{2}\right\rfloor}{m-(2i+1)}\nonumber\\
     &\ = \ \sum_{i = 1}^{\lfloor m/2\rfloor}\binom{m-i-2}{m-2i} + \sum_{i = 0}^{\lfloor (m-1)/2\rfloor}\binom{m-i-2}{m-2i-1}\nonumber\\
     &\ =\ \begin{cases}
\sum_{i=1}^p \binom{2p-i-2}{2p-2i} + \sum_{i=0}^{p-1} \binom{2p-i-2}{2p-2i-1}, &\mbox{ if }m=2p, p\ge 1,\\
\sum_{i=1}^p\binom{2p-i-1}{2p+1-2i} +\sum_{i=0}^{p}\binom{2p-i-1}{2p-2i}, &\mbox{ if }m=2p+1, p\ge 1,
     \end{cases}\nonumber\\
     &\ =\ \begin{cases}
\sum_{i=1}^{p-1} \binom{2p-i-1}{2p-2i} + \binom{p-2}{0}, &\mbox{ if }m=2p, p\ge 1,\\
\sum_{i=1}^p\binom{2p-i}{2p+1-2i}, &\mbox{ if }m=2p+1, p\ge 1,
     \end{cases}\nonumber\\
     &\ =\ \begin{cases}
\sum_{i=1}^{p} \binom{2p-i-1}{2p-2i}, &\mbox{ if }m=2p, p\ge 1,\\
\sum_{i=1}^p\binom{2p-i}{2p+1-2i}, &\mbox{ if }m=2p+1, p\ge 1,
     \end{cases}\nonumber\\
     &\ =\ \sum_{i=1}^{\lfloor m/2\rfloor}\binom{m-i-1}{m-2i}\ =\ \sum_{i=1}^{\lfloor m/2\rfloor}\binom{m-i-1}{i-1}\ =\ \sum_{i=0}^{\lfloor (m-2)/2\rfloor} \binom{m-k-2}{k}.
\end{align}
By \eqref{e16} and \eqref{e17}, we obtain \eqref{e15}.

\bigskip

Case 2: $m\ge 2a-1$.  Then \eqref{e13} becomes
\begin{equation}\label{e18}
\sum_{k=0}^{a-2}\binom{\left\lfloor\frac{bm-(b-1)k-a}{b}\right\rfloor-1}{k}\ =\ \sum_{i =m-2a+1}^{m-1}\binom{\left\lfloor \frac{bm+(b-1)i-a}{2b-1}\right\rfloor-1}{i}.
\end{equation}
We have
\begin{equation}\label{e19}
   \sum_{k=0}^{a-2}\binom{ \left\lfloor \frac{bm-(b-1)k-a}{b}\right\rfloor-1}{k}\ =\ \sum_{k=0}^{a-2}\binom{m-k + \left\lfloor \frac{k-a}{b}\right\rfloor-1}{k}\ =\ \sum_{k=0}^{a-2}\binom{m-k -2}{k}. 
\end{equation}
Similarly, 
\begin{align}\label{e20}
    &\sum_{i = m-2a+1}^{m-1}\binom{\left\lfloor \frac{bm + (b-1)i-a}{2b-1}\right\rfloor-1}{i}\ =\ \sum_{i = m-2a+1}^{m-2}\binom{\left\lfloor \frac{bm + (b-1)i-a}{2b-1}\right\rfloor-1}{i}\nonumber\\
    &\ =\ \sum_{i=1}^{a-1}\left(\binom{\left\lfloor \frac{bm + (b-1)(m-2a-1+2i)-a}{2b-1}\right\rfloor-1}{m-2a-1+2i} + \binom{\left\lfloor \frac{bm + (b-1)(m-2a+2i)-a}{2b-1}\right\rfloor-1}{m-2a+2i}\right)\nonumber\\
      &\ =\ \sum_{i=1}^{a-1}\left(\binom{m-a+i-2+\left\lfloor\frac{b-i}{2b-1}\right\rfloor}{m-2a-1+2i} + \binom{ m-a+i-1+\left\lfloor \frac{-i}{2b-1}\right\rfloor}{m-2a+2i}\right)\nonumber\\
      &\ =\ \sum_{i=1}^{a-1}\left(\binom{m-a+i-2}{m-2a-1+2i}+\binom{ m-a+i-2}{m-2a+2i}\right)\nonumber\\
      &\ =\  \sum_{i=1}^{a-1}\binom{m-a+i-1}{m-2a+2i}\nonumber\\
      &\ =\ \sum_{i=1}^{a-1}\binom{m-a+i-1}{a-i-1}\nonumber\\
      &\ =\ \sum_{k=0}^{a-2}\binom{m-k-2}{k}.
\end{align}
Then \eqref{e18} follows immediately from \eqref{e19} and \eqref{e20}. 
\end{proof}

\begin{proof}[Proof of Lemma \ref{ki3}]
    We write \eqref{e12} out as 
    \begin{equation}\label{e21}\sum_{k=b\left\lfloor \frac{m-2a}{2b-1}\right\rfloor+a}^{\lfloor \frac{(m-1)b-a}{2b-1}\rfloor}\binom{\left\lfloor\frac{bm-(b-1)k-a}{b}\right\rfloor-1}{k}\ =\ \sum_{i=0}^{m-(2b-1)\left\lfloor \frac{m-2a}{2b-1}\right\rfloor-2a-1}\binom{\left\lfloor \frac{bm+(b-1)i-a}{2b-1}\right\rfloor-1}{i}.\end{equation}
    Let $m = (2b-1)s+t$ with $s\ge 0$ and $0\le t \le 2b-2$. We assume $t = 2r+1$ with $r\ge 0$; the proof for even $t$ is similar. Then
    \begin{align*}&\left\lfloor \frac{(m-1)b-a}{2b-1}\right\rfloor\ =\ \left\lfloor \frac{((2b-1)s+t-1)b-a}{2b-1}\right\rfloor\ =\ bs + \left\lfloor\frac{bt-b-a}{2b-1}\right\rfloor\\
    &\ = \ bs + \left\lfloor\frac{b(2r+1)-b-a}{2b-1}\right\rfloor\ =\ bs + r + \left\lfloor\frac{r-a}{2b-1}\right\rfloor,\\
    &b\left\lfloor\frac{m-2a}{2b-1}\right\rfloor+a\ =\ b\left\lfloor\frac{(2b-1)s+t-2a}{2b-1}\right\rfloor+a\ =\ bs + b\left\lfloor \frac{t-2a}{2b-1}\right\rfloor + a,
    \mbox{ and }\\
    &m-(2b-1)\left\lfloor \frac{m-2a}{2b-1}\right\rfloor-2a-1\\
    &\ =\ m-(2b-1)s-(2b-1)\left\lfloor\frac{t-2a}{2b-1}\right\rfloor-2a-1\\
    &\ = \ t-(2b-1)\left\lfloor \frac{t-2a}{2b-1}\right\rfloor -2a -1.
    \end{align*}

\bigskip

\noindent Case 1: $t\le 2a-1$. We get
$\lfloor(t-2a)/(2b-1)\rfloor = -1$. The right side of \eqref{e21} is
\begin{align*}
    &\sum_{i=0}^{2r+2b-2a-1}\binom{bs+\left\lfloor\frac{b(2r+1)+(b-1)i-a}{2b-1}\right\rfloor-1}{i}\\
    &\ =\ \sum_{j=0}^{r+b-a-1}\left(\binom{bs+\left\lfloor\frac{b(2r+1)+(b-1)2j-a}{2b-1}\right\rfloor-1}{2j} + \binom{bs+\left\lfloor\frac{b(2r+1)+(b-1)(2j+1)-a}{2b-1}\right\rfloor-1}{2j+1}\right)\\
    &\ =\ \sum_{j=0}^{r+b-a-1}\left(\binom{bs+r+j+\left\lfloor\frac{r+b-j-a}{2b-1}\right\rfloor-1}{2j} + \binom{bs+r+j+\left\lfloor \frac{r-j-a}{2b-1}\right\rfloor}{2j+1}\right)\\
    &\ =\ \sum_{j=0}^{r+b-a-1}\left(\binom{bs+r+j-1}{2j} + \binom{bs+r+j-1}{2j+1}\right)\\
    &\ =\ \sum_{j=0}^{r+b-a-1}\binom{bs+r+j}{2j+1},
\end{align*}
and the left side is
\begin{align*}
    \sum_{k=bs-b+a}^{bs+r-1}\binom{\left\lfloor\frac{bm-(b-1)k-a}{b}\right\rfloor-1}{k}&\ =\ \sum_{k=bs-b+a}^{bs+r-1}\binom{(2b-1)s+t-k+\lfloor\frac{k-a}{b}\rfloor-1}{k}\\
    &\ =\ \sum_{k=bs-b+a}^{bs+r-1}\binom{2bs+t-k-2}{k}\\
    &\ =\ \sum_{k=bs-b+a}^{bs+r-1}\binom{2bs+2r-k-1}{k}\\
    &\ =\ \sum_{k=bs-b+a}^{bs+r-1}\binom{2bs+2r-k-1}{2bs+2r-2k-1}\\
    &\ =\ \sum_{j=0}^{r+b-a-1}\binom{bs+r+j}{2j+1}.
\end{align*}
Hence, \eqref{e21} holds in this case. 

\bigskip

\noindent Case 2: $t\ge 2a$. We get
$\lfloor(t-2a)/(2b-1)\rfloor = 0$. The right side of \eqref{e21} is
\begin{align*}
    &\sum_{i=0}^{2r-2a}\binom{bs+\left\lfloor\frac{b(2r+1)+(b-1)i-a}{2b-1}\right\rfloor-1}{i}\\
    &\ =\ \sum_{j=0}^{r-a-1}\left(\binom{bs+\left\lfloor\frac{b(2r+1)+(b-1)2j-a}{2b-1}\right\rfloor-1}{2j} + \binom{bs+\left\lfloor\frac{b(2r+1)+(b-1)(2j+1)-a}{2b-1}\right\rfloor-1}{2j+1}\right)\\
    &\ +\ \binom{bs+2r-a-1}{2r-2a}\\
    &\ =\ \sum_{j=0}^{r-a-1}\left(\binom{bs+r+j+\left\lfloor\frac{r+b-j-a}{2b-1}\right\rfloor-1}{2j} + \binom{bs+r+j+\left\lfloor \frac{r-j-a}{2b-1}\right\rfloor}{2j+1}\right)\\
    &\ +\ \binom{bs+2r-a-1}{2r-2a}\\
    &\ =\ \sum_{j=0}^{r-a-1}\binom{bs+r+j-1}{2j} + \sum_{j=0}^{r-a-1}\binom{bs+r+j}{2j+1}+ \binom{bs+2r-a-1}{2r-2a}\\
    &\ =\ \binom{bs+r-1}{0}+\sum_{j=1}^{r-a-1}\binom{bs+r+j-1}{2j} + \sum_{j=0}^{r-a-2}\binom{bs+r+j}{2j+1}\\
    &\ +\ \binom{bs+2r-a-1}{2r-2a-1} + \binom{bs+2r-a-1}{2r-2a}\\
    &\ =\ 1 + \sum_{j=0}^{r-a-2}\left(\binom{bs+r+j}{2j+2} + \binom{bs+r+j}{2j+1}\right) + \binom{bs+2r-a}{2r-2a}\\
    &\ =\ 1 + \sum_{j=0}^{r-a-2}\binom{bs+r+j+1}{2j+2} + \binom{bs+2r-a}{2r-2a}\ =\ \sum_{j=-1}^{r-a-1}\binom{bs+r+j+1}{2j+2},
\end{align*}
and the left side is
\begin{align*}
    &\sum_{k=bs+a}^{bs+r}\binom{(2b-1)s+t-k+\left\lfloor\frac{k-a}{b}\right\rfloor-1}{k}\\
    &\ =\ \sum_{j=a}^{r}\binom{2bs+t-bs-j-1}{bs+j}\\
    &\ =\ \sum_{j=a}^{r}\binom{bs+2r-j}{bs+j}\\
    &\ =\ \sum_{j=a}^r\binom{bs+2r-j}{2r-2j}\ =\ \sum_{i=-1}^{r-a-1}\binom{bs+r+i+1}{2i+2}, \mbox{ with }i = r-j-1.
\end{align*}
We have, therefore, confirmed \eqref{e21} in this case. 
\end{proof}

We have shown that $(|\mathcal{D}^{(a,b)}_n|)_{n=1}^\infty$ is a $b$-periodic subsequence of $(d_{b,n})_{n=1}^\infty$, which satisfies the relatively simple recurrence $d_{b, n} = d_{b, n-b}  + d_{b, n-2b+1}$, for $n\ge 2b$. To deduce a linear recurrence for $(|\mathcal{D}^{(a,b)}_n|)_{n=1}^\infty$ from its parent sequence $(d_{b,n})_{n=1}^\infty$, we need to encode linear recurrences by polynomials. 

\begin{defi}\normalfont
    Let $p(x) = c_0+c_1x + \cdots +c_mx^m$ be a polynomial with real coefficients $(c_i)_{i=1}^m$. A sequence $(a_n)_{n=1}^\infty$ is said to satisfy $p(x)$ if
$$c_0a_n + c_1a_{n-1} + \cdots + c_ma_{n-m} \ =\ 0, \mbox{ for all }n \ge m + 1.$$
\end{defi}

\begin{rek}\label{rek1}\normalfont
   Suppose that a polynomial $p(x)$ divides a polynomial $q(x)$. If the sequence $(a_n)_{n=1}^\infty$ satisfies $p(x)$, then $(a_n)_{n=1}^\infty$ satisfies $q(x)$. 
\end{rek}

\begin{proof}[Proof of Theorem \ref{mt3}]
    We want to show that $(|\mathcal{D}^{(a,b)}_n|)_{n=1}^\infty$
    satisfies the recurrence
    $$|\mathcal{D}^{(a,b)}_n|\ =\ \sum_{i=1}^b(-1)^{i-1}\binom{b}{i}|\mathcal{D}^{(a,b)}_{n-i}| + |\mathcal{D}^{(a,b)}_{n-2b+1}|$$
    or equivalently, the polynomial
    \begin{align*}
    q(x)&\ :=\ 1 - \sum_{i=1}^b (-1)^{i-1}\binom{b}{i}x^i - x^{2b-1}\\
    &\ =\ \sum_{i=0}^b (-1)^{i}\binom{b}{i}x^i - x^{2b-1}\\
    &\ =\ \sum_{i=0}^b \binom{b}{i}(-x)^i - x^{2b-1}\ =\ (1-x)^b - x^{2b-1}.
    \end{align*}
    Meanwhile, the sequence $(d_{b,n})_{n=1}^\infty$ satisfies 
    $$p(x)\ :=\ 1 - x^b-x^{2b-1}.$$
    We have
    \begin{align*}
        q(x^b) &\ =\  (1-x^b)^b - (x^{2b-1})^b\\
        &\ =\ (1-x^b - x^{2b-1})\sum_{i=1}^{b-1}(1-x^b)^i (x^{2b-1})^{b-i}\\
        &\ =\ p(x)\sum_{i=1}^{b-1}(1-x^b)^i (x^{2b-1})^{b-i}.
    \end{align*}
    Hence, the polynomial $p(x)$ divides the polynomial $q(x^b)$. By Remark \ref{rek1}, the sequence $(d_{b,n})_{n=1}^\infty$ satisfies $q(x^b)$. Since $(|\mathcal{D}^{(a,b)}_n|)_{n=1}^\infty$ is a $b$-periodic subsequence of $(d_{b,n})_{n=1}^\infty$, it follows that $(|\mathcal{D}^{(a,b)}_n|)_{n=1}^\infty$ satisfies $q(x)$. 
\end{proof}

\appendix
\section{Equivalence between two definitions of symmetric sets}\label{symset}
\begin{proof}[Proof that \eqref{e1} is equivalent to \eqref{e2}]
    Assume \eqref{e1}. Pick $m\in s_A-\left\{a\in A\,:\, a < s_A\right\}$. Then there exists $a'\in A$ with $a' < s_A$ and  $m = s_A -a'$. By \eqref{e1}, there exists $a''\in A$ with $2s_A - a'' = a'$. It follows that $m = s_A - (2s_A - a'') = a'' - s_A$. Since $m > 0$, we have $a'' \in \left\{a\in A\,:\, a > s_A\right\}$, so $m\in \left\{a\in A\,:\, a > s_A\right\}-s_A$. We have shown that $s_A-\left\{a\in A\,:\, a < s_A\right\} \subset \left\{a\in A\,:\, a > s_A\right\}-s_A$. By symmetry, we obtain
    $s_A-\left\{a\in A\,:\, a < s_A\right\} = \left\{a\in A\,:\, a > s_A\right\}-s_A$.

    Assume \eqref{e2}. Pick $a^*\in A$. Without loss of generality, assume that 
    $a^* < s_A$. By \eqref{e2}, there exists $a^{**}\in A$ with $a^{**} > s_A$ and $a^{**}-s_A = s_A - a^*$, so $a^* = 2s_A - a^{**}$, which is in $2s_A - A$. Hence, $A\subset 2s_A - A$. By symmetry, we obtain $A = 2s_A - A$.
\end{proof}

\section{Bijectivity of $\xi_{n,k}$}\label{bi-xi}

\begin{lem}
   For $n\ge 2$ and even $k\ge 6$, the map $\xi_{n,k}$ is a bijection. As a result, 
$$|\mathcal{M}_{2n+3}(k+1)^{(1, \ge 3)}| \ =\ |\mathcal{M}_{2n+5}(k+2)^{(1, = 2), \left(\frac{k}{2}, \ge 2\right)}|.$$ 
\end{lem}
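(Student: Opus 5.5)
I will follow the template of the proof for $\Delta_{n,k}$: check that $\xi_{n,k}$ is well-defined, note injectivity, and construct an explicit preimage to get surjectivity. First I need to understand how $\xi_{n,k}$ sees a set $F\in\mathcal{M}_{2n+3}(k+1)^{(1,\ge 3)}$. Since $|F|=k+1$ is odd, $F$ consists of $k/2$ elements below $s_F$, the middle element $s_F$, and $k/2$ elements above $s_F$. Hence $\min_{k/2}F<s_F<\max_{k/2}F$, and $\min_{k/2+1}F=s_F=\max_{k/2+1}F$.

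I then read off the image. Its two smallest elements are $\min F+1$ and $\min F+3$. The elements $\min_2F,\ldots,\min_{k/2-1}F$ are each increased by $1$. Next come $\min_{k/2}F+2$, then $\max_{k/2}F+1$, then $\max_{k/2-1}F,\ldots,\max_2F$ each increased by $2$, and finally $\max F$ and $\max F+2$. The middle element $s_F$ is discarded. The image therefore has $2+(k/2-2)+2+(k/2-2)+2=k+2$ elements. I will verify the following properties in turn:
\begin{itemize}
\item $\min\xi_{n,k}(F)=k+2=|\xi_{n,k}(F)|$;
\item the maximum is $2n+5$;
\item $\min_2-\min=2$;
\item $\min_{k/2+1}-\min_{k/2}=(\min_{k/2}F+2)-(\min_{k/2-1}F+1)\ge 2$.
\end{itemize}
For the ordering to be correct, I need $\min F+3<\min_2F+1$, which is exactly the hypothesis $\min_2F-\min F\ge 3$. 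I also need $\min_{k/2}F+2<\max_{k/2}F+1$, and similarly $\max_2F+2<\max F$, which follows by symmetry.

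For symmetry of the image, the new center is $(k+2+2n+5)/2=s_F+2$. The pairs are $(\min F+1,\max F+2)$ and $(\min F+3,\max F)$. Each other low element $a+1$ pairs with the mirror element $(2s_F-a)+2$. Finally $\min_{k/2}F+2$ pairs with $\max_{k/2}F+1$, because their sum is $2s_F+3$. So every pair sums to $2(s_F+2)$, and the image is symmetric. This pairing check is the step I expect to need the most care; everything else is bookkeeping.

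Injectivity follows because $F$ is recovered from its image: the image determines $\min F$, each shifted block, and $s_F$, the last being $\min F+\max F$ halved. For surjectivity, take $E\in\mathcal{M}_{2n+5}(k+2)^{(1,=2),(k/2,\ge 2)}$ and set
\[
F:=\{\min E-1\}\cup(\{a\in E: \min_3E\le a\le\min_{k/2}E\}-1)\cup\{\min_{k/2+1}E-2,\ s_E-2,\ \max_{k/2+1}E-1\}\cup(\{a\in E:\max_{k/2}E\le a\le\max_3E\}-2)\cup\{\max_2E\}.
\]
I then check the membership conditions for $F$:
\begin{itemize}
\item $\min F=k+1=|F|$, using $\min E=k+2$;
\item $\max F=\max_2E=2n+3$, since $\min_2E=\min E+2$ and symmetry give $\max_2E=\max E-2$;
\item $\min_2F-\min F=\min_3E-\min E\ge 3$, since $\min_3E>\min_2E=\min E+2$;
\item $F$ is increasing, which uses the hypothesis $\min_{k/2+1}E-\min_{k/2}E\ge 2$, so that $\min_{k/2}E-1<\min_{k/2+1}E-2$;
\item $s_E-2$ lies strictly between $\min_{k/2+1}E-2$ and $\max_{k/2+1}E-1$;
\item $F$ is symmetric about $s_E-2$.
\end{itemize}
With these checks done, $F$ lies in the domain and $\xi_{n,k}(F)=E$, so $\xi_{n,k}$ is surjective.
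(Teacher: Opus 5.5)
Your strategy is the paper's: verify the defining properties of the image, note injectivity, and exhibit an explicit preimage. Your preimage also agrees with the paper's in every element except the middle one. That middle element is wrong, and it traces back to a slip in locating the center of the image.

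\textbf{The center of the image.} Since $|F|=k+1$ and $\min F+\max F=2n+k+4$, the center $s_F=n+\frac{k}{2}+2$ is an integer. The image has center
$$\frac{(k+2)+(2n+5)}{2}\ =\ n+\frac{k}{2}+\frac{7}{2}\ =\ s_F+\frac{3}{2},$$
not $s_F+2$. Your own pair sums already show this. For example, $(\min F+1)+(\max F+2)=2s_F+3$, and the pair you computed explicitly also gives $(\min_{k/2}F+2)+(\max_{k/2}F+1)=2s_F+3$. So the claim that every pair sums to $2(s_F+2)$ is false. The symmetry of $\xi_{n,k}(F)$ still holds, because every pair sums to the same value $2s_F+3$.

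\textbf{Where the slip breaks the argument.} It matters in the surjectivity step. For $E\in\mathcal{M}_{2n+5}(k+2)^{(1,=2),(k/2,\ge 2)}$ you have $\min E+\max E=2n+k+7$, which is odd. So $s_E$ is a half-integer and your middle element $s_E-2$ is not an integer. Consequently your $F$ is not a subset of $\mathbb{N}$, and the check that $F$ is symmetric about $s_E-2$ fails. Indeed, the outer pair $\min E-1$ and $\max_2E=\max E-2$ sums to $2s_E-3$, which forces the center to be $s_E-\frac{3}{2}$. The proposed preimage therefore does not lie in the domain, and $\xi_{n,k}(F)=E$ cannot hold.

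\textbf{The fix.} Replace $s_E-2$ by $s_E-\frac{3}{2}=n+\frac{k}{2}+2$, which is exactly the paper's choice. Then:
\begin{itemize}
\item this element lies strictly between $\min_{k/2+1}E-2$ and $\max_{k/2+1}E-1$, because $\min_{k/2+1}E<s_E<\max_{k/2+1}E$;
\item each remaining pair (the outer pair, the shifted blocks, and $\min_{k/2+1}E-2$ with $\max_{k/2+1}E-1$) sums to $2s_E-3$, so $F$ is symmetric;
\item $\xi_{n,k}(F)=E$ follows term by term.
\end{itemize}

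With this correction your proof coincides with the paper's. Your remark that the one discarded element $s_F$ is recovered as $(\min F+\max F)/2$ is a detail the paper leaves implicit when it says the definition guarantees injectivity.
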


\begin{proof}
Recall the map $\xi_{n,k}: \mathcal{M}_{2n+3}(k+1)^{(1, \ge 3)} \rightarrow \mathcal{M}_{2n+5}(k+2)^{(1, = 2), \left(\frac{k}{2}, \ge 2\right)}$,
\begin{align*}
    &F\ \mapsto\ \{\min F+1, \min F+3\}\cup(\{a\in F\,:\,\min_2 F\le a\le  \min_{k/2-1} F\}+1)\\
    &\cup\{\min_{k/2}F+2, \max_{k/2} F+1\}\\
    &\cup(\{a\in F\,:\,\max_{k/2-1} F\le a\le  \max_{2} F\}+2)\cup\{\max F, \max F+2\}.
\end{align*}
We have
\begin{align*}
    &\min_2 \xi_{n,k}(F) - \min\xi_{n,k}(F)\ =\ (\min F + 3)-(\min F+1)\ = \ 2,\\
    &\min_{k/2+1}\xi_{n,k}(F) - \min_{k/2}\xi_{n,k}(F)\ =\ (\min_{k/2} F + 2) - (\min_{k/2-1} F+1)\ \ge\ 2,\\
    &\max \xi_{n,k}(F)\ =\ \max F+2\ =\ (2n+3)+2\ =\ 2n+5,\\
    &\min \xi_{n,k}(F)\ =\ \min F + 1\ =\ k+2\ =\ |F|+1\ =\ |\xi_{n,k}(F)|,
\end{align*}
and $\xi_{n,k}(F)$ is symmetric because $F$ is symmetric.

The definition of $\xi_{n,k}$ guarantees injectivity. We prove surjectivity of $\xi_{n,k}$. Pick $E\in \mathcal{M}_{2n+5}(k+2)^{(1, = 2), \left(\frac{k}{2}, \ge 2\right)}$. Let
\begin{align*}
F \ :=\ \{\min E-1\}&\cup(\{a\in E\,:\,\min_3 E\le a\le  \min_{k/2} E\}-1)\\
&\cup \{\min_{k/2+1} E - 2, n+k/2+2, \max_{k/2+1}E-1\}\\
&\cup(\{a\in E\,:\,\max_{k/2} E\le a\le  \max_{3} E\}-2)\cup\{\max E - 2\}.
\end{align*}
Note that 
$$\min_{k/2+1} E\ <\ \frac{(2n+5)+(k+2)}{2}\ <\ \max_{k/2+1} E,$$
so
$$\min_{k/2+1} E-2\ <\ n+\frac{k}{2}+2\ <\ \max_{k/2+1} E-1.$$
We have
\begin{align*}
    &\min_2 F-\min F\ =\ (\min_3 E - 1)- (\min E-1)\ =\ \min_3 E - \min E\ \ge\ 3, \\
    &\max F\ =\ \max E - 2\ =\ (2n+5) - 2\ =\ 2n+3,\\
    &\min F\ =\ \min E - 1\ =\ k+1\ =\ |E| - 1\ =\ |F|,
\end{align*}
and $F$ is symmetric because $E$ is symmetric. Hence, the set $F$ is in $\mathcal{M}_{2n+3}(k+1)^{(1,\ge 3)}$. Furthermore, we have $\xi_{n,k}(F) = E$, so $\xi_{n,k}$ is surjective. 
\end{proof}

\section{A floor-function lemma}\label{ff_lem}
\begin{proof}[Proof of Lemma \ref{GPTlem}]
    If $t = 2s$, then $s+1\le a$, and
    $$(s-1)(2b-1)\ \le\ 2bs-a\ <\ (2b-1)s.$$
    Hence, $$s-1\ \le\ \frac{2bs-a}{2b-1}\ <\ s,
    \mbox{ which gives }\left\lfloor \frac{tb-a}{2b-1}\right\rfloor\ =\ \left\lfloor \frac{2bs-a}{2b-1}\right\rfloor\ =\ s-1\ =\ \left\lfloor\frac{t-1}{2}\right\rfloor.$$
    If $t = 2s+1$, then $s+2\le a$, and 
    $$s(2b-1)\ \le\ (2s+1)b-a\ <\ (s+1)(2b-1),$$
    which gives
    $$s\ \le\ \frac{(2s+1)b-a}{2b-1}\ <\ s+1\mbox{, and }\left\lfloor\frac{tb-a}{2b-1}\right\rfloor\ =\ \left\lfloor \frac{(2s+1)b-a}{2b-1}\right\rfloor\ =\ s\ =\ \left\lfloor\frac{t-1}{2}\right\rfloor.$$
\end{proof}
%%%%%%%%%%%%%%%%%%%%%%%%%%%%%%%%%%%%%%%%%%%%%%%%%%%%%%%%%%%%%%%%%%%%%%%%%%%%%%%%%%%%%%%%%%%%%%%%%%%%%%%%%%%%%%%%%%%%%%%%%%%%%%%%%%%%%%%%%%%%%%%%%%%%%%%%%%%%%%%%%%%%%%%%%%%%%%%%%%%%%%%%%%%%%%%%%%%%%%%%%%%%%%%%%%%%%%%%%%%%%%%%%%%%%%%%%%%%%%%%%%%%%%%%%%%%%%%%%%%%%%%%%%%%%%%%%%%%%%%%%%%%%%%%%%%%%%%%%%%%%%%%%%%%%%%%%%%%%%%%%%%%%%%%%%%%%%%%%%%%%%%%%%%%%%%%%%%%%%%%%%%%%%%%%%%%

\ \\

\begin{thebibliography}{9}
\bibitem{BC} K. Beanland and H. V. Chu, On Schreier-type sets, partitions,
and compositions, \textit{J. of Integer Seq.} \textbf{27} (2024), 13 pp.
\bibitem{BCF} K. Beanland, H. V. Chu, and C. E. Finch-Smith, Generalized Schreier sets, linear recurrence relation, and Tur\'{a}n graphs, \textit{Fibonacci Quart.} \textbf{60} (2022), 352--356.
\bibitem{BGHH} K. Beanland, D. Gorovoy, J. Hodor, and D. Homza, Counting unions of Schreier sets, \textit{Bull. Aust. Math. Soc.} \textbf{110} (2024), 19--31.
\bibitem{BQ} A. T. Benjamin and J. J. Quinn, \textit{Proofs That Really Count: The Art of Combinatorial Proof}, Mathematical Association of America, Washington, DC, 2003.
\bibitem{Bi} A. Bird, Jozef Schreier, Schreier sets, and the Fibonacci sequence, 2012. Available at \url{https://outofthenormmaths.wordpress.com/2012/05/13/jozef-schreier-schreier-sets-and-the-fibonacci-sequence/}.
\bibitem{C1} H. V. Chu, The Fibonacci sequence and Schreier-Zeckendorf sets, \textit{J. of Integer Seq.} \textbf{22} (2019), 12 pp.
\bibitem{C2} H. V. Chu, Partial sums of the Fibonacci sequence, \textit{Fibonacci Quart.} \textbf{59} (2021), 132--135.
\bibitem{C3} H. V. Chu, Various sequences from counting subsets, \textit{Fibonacci Quart.} \textbf{59} (2021), 152--157.

\bibitem{CGKMTV} H. V. Chu, Y. Geng, J. King, S. J. Miller, G. Tresch, and Z. L. Vasseur, Linear recurrences from counting Schreier-type multisets, \textit{Integers} \textbf{26} (2026), 24 pp. 
\bibitem{CIMSZ} H. V. Chu, N. Irmark, S. J. Miller, L. Szalay, and S. X. Zhang, 
Schreier multisets and the s-step Fibonacci sequences, \textit{Integers} \textbf{24A} (2024), 11 pp. 
\bibitem{CMX} H. V. Chu, S. J. Miller, and Z. Xiang, Higher order Fibonacci sequences from generalized Schreier sets, \textit{Fibonacci Quart.}  \textbf{58} (2020), 249--253. 
\bibitem{CV0} H. V. Chu and Z. L. Vasseur, Weighted Schreier-type sets and the Fibonacci sequence, \textit{Fibonacci Quart.} \textbf{62} (2024), 305--315. 
\bibitem{CV} H. V. Chu and Z. L. Vasseur, Linear recurrences of generalized Schreier sets revisited, \textit{J. of Integer Seq.} \textbf{29} (2026), 18 pp.
\bibitem{KKMW} M. Kolo\v{g}lu, G. S. Kopp, S. J. Miller, and Y. Wang, On the number of summands in Zeckendorf decompositions, \textit{Fibonacci Quart.} \textbf{49}, 116--130.
\bibitem{GPT} OpenAI. (2026). ChatGPT (GPT-5.5) [Large language model]. \url{https://chatgpt.com}.
\end{thebibliography}
\end{document}